\author{Lukas Heidemann}
\title{Frames in Pretriangulated Dg-Categories}
\date{May 29, 2020}
\newtheorem{thm}{Theorem}[section]
\newtheorem{prop}[thm]{Proposition}
\newtheorem{lem}[thm]{Lemma}
\theoremstyle{definition}
\newtheorem{definition}[thm]{Definition}
\newtheorem{example}[thm]{Example}
\newtheorem{remark}[thm]{Remark}
\newcommand{\cat}[1]{\mathcal{#1}}
\newcommand{\sign}[1]{{(-1)}^{#1}}
\newcommand{\Nerve}{\operatorname{N}}
\newcommand{\dgNerve}{\operatorname{N}_{\operatorname{dg}}}
\newcommand{\fNerve}{\operatorname{N}_{\operatorname{f}}}
\newcommand{\Ho}{\operatorname{Ho}}
\DeclareMathOperator{\Sd}{Sd}
\newcommand{\Ob}{\operatorname{Ob}}
\newcommand{\Hom}[1]{\operatorname{Hom}_{#1}}
\newcommand{\Map}[1]{\operatorname{Map}_{#1}}
\newcommand{\MapR}[1]{\operatorname{Map}^{\operatorname{R}}_{#1}}
\renewcommand{\restriction}{\mathord{\upharpoonright}}
\newcommand{\seq}[1]{\langle #1 \rangle}
\newcommand{\PathAlg}{\operatorname{Path}}
\newcommand{\PathMod}{\operatorname{Cell}}
\newcommand{\op}[1]{#1^{\operatorname{op}}}
\newcommand{\Z}{\mathbb{Z}}
\newcommand{\N}{\mathbb{N}}
\newcommand{\FreeAb}[1]{\Z \cdot #1}
\newcommand{\id}{\operatorname{id}}
\newcommand{\Moore}{\operatorname{N}}
\newcommand{\Cyl}{\operatorname{Cyl}}
\newcommand{\Cone}{\operatorname{Cone}}
\newcommand{\Mod}[1]{#1\operatorname{-Mod}}
\newcommand{\Ch}{\operatorname{Ch}}
\newcommand{\ChCh}{\operatorname{Ch}}
\newcommand{\Ab}{\operatorname{Ab}}
\newcommand{\repr}[1]{Y#1}
\newcommand{\Ainfty}{\mathcal{A}_\infty}
\newcommand{\defn}[1]{\textit{#1}}
\newcommand{\dgCatAlg}[1]{#1^\text{alg}}
\begin{document}

\maketitle

\begin{abstract}
  Triangulated categories arising in algebra can often be described as the
  homotopy category of a pretriangulated dg-category, a category enriched in
  chain complexes with a natural notion of shifts and cones that is accessible
  with all the machinery of homological algebra.
  Dg-categories are algebraic models of $\infty$-categories and thus fit into a
  wide ecosystem of higher-categorical models and translations between them. In
  this paper we describe an equivalence between two methods to turn a pretriangulated
  dg-category into a quasicategory.

  The dg-nerve of a dg-category is a quasicategory whose simplices are coherent
  families of maps in the mapping complexes. In contrast, the cycle category of
  a pretriangulated category forgets all higher-degree elements of the mapping
  complexes but becomes a cofibration category that encodes the homotopical
  structure indirectly. This cofibration category then has an associated
  quasicategory of frames in which simplices are Reedy-cofibrant resolutions.

  For every simplex in the dg-nerve of a pretriangulated dg-category we
  construct such a Reedy-cofibrant resolution and then prove that this
  construction defines an equivalence of quasicategories which is natural up to
  simplicial homotopy. Our construction is explicit enough for calculations and
  provides an intuitive explanation of the resolutions in the quasicategory of
  frames as a generalisation of the mapping cylinder.
\end{abstract}

\tableofcontents*

\section{Introduction}

Homotopical structure arises in various subfields of mathematics, which has
lead to the development of a multitude of approaches to abstract homotopy
theory. These approaches make different tradeoffs and are each accessible to
respective flavours of either concrete calculation or high-level constructions.
By considering models of a single theory in a variety of these frameworks,
manipulations can be performed in the context where they are most natural.

Differentially graded categories are categories that are enriched in chain
complexes.  The $0$-dimensional cycles in the mapping complexes of a
dg-category $\cat{C}$ are the maps of the underlying ordinary category
$\cat{C}_0$, while the $1$-dimensional chains function as homotopies and the
higher-degree chains as coherence data. A pretriangulated dg-category
additionally satisfies some representability properties that allow the
construction of cones and shifts, making the underlying category into a
triangulated category. Many triangulated categories in algebra are induced in
this way by a pretriangulated dg-category~\cite{enhanced-triangulated}.

Coherent families of higher-degree maps in the mapping complexes of a
dg-category $\cat{C}$ organise into a quasicategory $\dgNerve(\cat{C})$ called
the dg-nerve~\cite{a-infinity-dg-nerve, higher-algebra}. When $\cat{C}$ is
pretriangulated, the cycle category $\cat{C}_0$ can also be seen as a
cofibration category~\cite{brown-fibrant-objects, topological-triangulated} which captures the homotopical structure in form of
two distinguished classes of maps called weak equivalences and cofibrations.
This cofibration category $\cat{C}_0$ can in turn be made into a quasicategory
$\fNerve(\cat{C}_0)$ by a construction called the quasicategory of
frames~\cite{szumilo-two-models}, based on resolutions with homotopical Reedy
cofibrant diagrams.

While the transition from a pretriangulated dg-category $\cat{C}$ to its cycle
category $\cat{C}_0$ forgets all higher-degree maps, the shifts and cones
guarantee that no relevant information is lost. This can already be seen via an
abstract argument: The dg-nerve of the dg-category of chain complexes is the
$\infty$-localization of the underlying ordinary category at the weak homotopy
equivalences~\cite[Prop. 1.3.4.5]{higher-algebra}, which generalizes to
arbitrary pretriangulated dg-categories with slight modifications. Since the
quasicategory of frames also implements
this $\infty$-localization~\cite{szumilo-frames}, both of these nerve constructions
translate pretriangulated dg-categories into equivalent quasicategories.

In this paper we construct a simplicial map $B : \dgNerve(\cat{C}) \to
\fNerve(\cat{C}_0)$ that directly exhibits this equivalence for any
pretriangulated dg-category $\cat{C}$. Objects in the resolution diagrams
produced by this map are immediate generalizations of the mapping cylinder of
chain complexes that simultaneously accomodate multiple maps with non-trivial
coherence structure and resolutions of the individual objects. This way we do
not only establish the equivalence of the two models, but provide an
intuitively accessible algebraic motivation for the simplices of the
quasicategory of frames.

After reviewing the theory of cofibration categories in
Section~\ref{sec:cofibration-categories} and pretriangulated dg-categories in
Section~\ref{sec:dg-categories}, we
proceed as follows: In Section~\ref{sec:construction}, we define a Reedy
cofibrant diagram $B(X, f) : D[n] \to \cat{C}_0$ for any $n$-simplex $(X, f)$
of the dg-nerve $\dgNerve(\cat{C})$ of a pretriangulated dg-category $\cat{C}$.
In Section~\ref{sec:homotopical} we see that this diagram is also homotopical and
thus is an $n$-simplex of the quasicategory of frames $\fNerve(\cat{C}_0)$. In
Section~\ref{sec:functor} we show that $B$ is compatible with the simplicial
structure and hence defines a functor of quasicategories. We conclude in
Section~\ref{sec:equivalence} by proving that this functor is an equivalence.

\paragraph{Acknowledgements}

This paper originally was written as a Master's thesis for the MSc in
Mathematics at the University of Bonn under the supervision of Prof.\ Stefan
Schwede. I would also like to thank Dr.\ Fabian Hebestreit and Dr.\ Jamie
Vicary for their careful reading and their valuable suggestions.

\paragraph{Notation}

The category of finite ordinals $\Delta$ consists of objects given by the sets
$[n] = \{ 0, \ldots, n \}$ for all natural numbers $n \in \N$, together with
the order preserving maps as morphisms. Where unambiguous, we also denote a
morphism $f : [n] \to [m]$ as a sequence by $\seq{f(0), \ldots, f(n)}$ and
write $f \cdot g$ for the concatenation of two sequences $f$ and $g$.

We use the theory and notation for quasicategories as discussed
in~\cite{higher-topos-theory}. In particular $\MapR{\cat{C}}$ denotes the space
of right morphisms in a quasicategory $\cat{C}$, and $K \star L$ the join of
two simplicial sets $K$ and $L$.

A graded map $f: X \to Y$ between chain complexes of degree $|f|$ is a family
of maps $f_n : X_n \to Y_{n + |f|}$ of abelian groups for every $n \in \N$.
For graded maps $f : X \to X'$ and $g : Y \to Y'$ between chain
complexes and homogeneous elements $x \in X, y \in Y$, we use the sign
conventions
\begin{align*}
  (f \otimes g)(x \otimes y) &= \sign{|x||g|} f(x) \otimes g(y), \\
  d(x \otimes y) &= dx \otimes y + \sign{|x|} x \otimes dy.
\end{align*}

\section{Cofibration Categories and the Quasicategory of Frames}\label{sec:cofibration-categories}

In some categories the isomorphisms do not capture the appropriate notion of equivalence of the
idealised concepts that the category was intended to model. Prominently, this
is the case for the categories of topological spaces or chain complexes as
models for homotopy types, which are detected by the weak homotopy equivalences
and quasi-isomorphisms, respectively.

Generalising from this, abstract homotopy theory starts with a category in which some
morphisms are marked as weak equivalences and localises the category such that
these morphisms become isomorphisms. It then provides tools to study the
localised category, which often behaves dramatically differently than the
original. Homotopical categories~\cite{dwyer-homotopy} impose some additional
condition on the chosen weak equivalences that makes the theory better behaved:

\begin{definition}
  A \defn{homotopical category} is a category $\cat{C}$ together with a
  collection of morphisms called \defn{weak equivalences} that contains
  all identity morphisms and satisfies the 2-out-of-6 property: If
  \[
    \begin{tikzcd}
      W \ar{r}{f} &
      X \ar{r}{g} &
      Y \ar{r}{h} &
      Z
    \end{tikzcd}
  \]
  is a composable sequence in $\cat{C}$ where $h \circ g$ and
  $g \circ f$ are weak equivalences, then $f$, $g$, $h$, and
  $h \circ g \circ f$ are weak equivalences as well.
  A functor between homotopical categories is homotopical if it
  preserves weak equivalences.
\end{definition}

The data of a homotopical category already suffices to define all usual notions
of abstract homotopy theory, such as homotopy categories, derived functors, and
homotopy limits and colimits~\cite{categorical-homotopy}. In practice, it is
worthwhile to consider homotopical categories with additional structure that
makes calculations much more feasible. Quillen's model
categories~\cite{quillen-homotopical-algebra} equip homotopical categories with
two extra collections of morphisms, called the fibrations and cofibrations,
that interact with each other and the weak equivalences via weak factorisation
systems. Brown's categories of fibrant objects~\cite{brown-fibrant-objects} are
a weakening of this theory whose axioms are both easier to verify and
applicable to more examples, while retaining some of the power of model categories.
In this paper, we will consider the dual of this notion:

\begin{definition}
  A \defn{cofibration category} is a homotopical category $\cat{C}$ equipped
  additionally with a class of morphisms called \defn{cofibrations} (where an
  \defn{acyclic cofibration} is a morphism that is both a cofibration and a
  weak equivalence) such that:
  \begin{enumerate}[noitemsep]
    \item All isomorphisms are acyclic cofibrations. Cofibrations
      are stable under composition. The category $\cat{C}$ has an initial object
      and every morphism from the initial object is a cofibration.
    \item Cofibrations and acyclic cofibrations are closed under pushouts. 
    \item Every morphism in $\cat{C}$ can be factored as the composite of
      a cofibration followed by a weak equivalence.
  \end{enumerate}
\end{definition}

The subcategory of cofibrant objects of a model category canonically becomes a
cofibration category, but conversely it is not always possible or convenient to
find a model structure that gives rise to a particular cofibration category
of interest; see for example~\cite[Section 1.4]{szumilo-two-models}.

Cofibration categories are models of finitely complete homotopy
theories~\cite{szumilo-two-models} and admit a theory of Reedy cofibrant
diagrams, with direct categories playing the role of Reedy categories to
compensate for the lack of fibrations. We replicate a few definitions and
lemmas that are used in the main part of the thesis; for a full discussion
see~\cite{szumilo-two-models} and~\cite{reedy-categories}.

\begin{definition}\leavevmode
  \begin{enumerate}
    \item A category $I$ is \defn{direct} if it admits a functor
      $\deg : I \to \N$ that reflects identities, where $\N$ is equipped
      with the usual order.
  \end{enumerate}
  Let $I$ be a direct category.
  \begin{enumerate}[noitemsep]
    \setcounter{enumi}{1}
    \item Let $i \in I$, then the \defn{latching
        category} $\partial (I \downarrow i)$ is the full subcategory of the
      slice category $I \downarrow i$ without the object $\id_i$.
    \item Let $i \in I$ and $X : I \to \cat{C}$, then the \defn{latching
        object} $L_i X$ is (if it exists) a colimit of
      \[
        \begin{tikzcd}[column sep = 1.25cm]
          \partial (I \downarrow i) \ar{r}{\text{source}} &
          I \ar{r}{X} &
          \cat{C}
        \end{tikzcd}.
      \]
      The \defn{latching map} is the canonical map $L_i X \to X$
      induced by the inclusion
      \[\partial (I \downarrow i) \hookrightarrow (I \downarrow i). \]
    \item Let $\cat{C}$ be a cofibration category.
      A diagram $X : I \to \cat{C}$ is \defn{Reedy cofibrant} if for every $i \in I$
      is the latching object $L_i X$ exists and the latching map is a
      cofibration.
    \item Let $\cat{C}$ be a cofibration category.
      A natural transformation $f : X \to Y$ between Reedy cofibrant diagrams
      $X \to Y$ is a \defn{Reedy cofibration} if for all $i \in I$ the induced map
      \[ X(i) \sqcup_{L_i X} L_i Y \longrightarrow Y(i) \]
      is a cofibration.
    \item A map $I \to J$ of (small) categories is a \defn{sieve} if it is
      injective on objects, fully faithful and closed under precomposition.
  \end{enumerate}
\end{definition}

We will use the following two lemmas that let us manipulate Reedy cofibrant diagrams:

\begin{lem}[{\cite[Theorem 9.3.8(1a)]{reedy-categories}}]
  Let $I$ be a homotopical direct category with finite latching categories and
  $\cat{C}$ a cofibration category. Then the category $\cat{C}^I_\text{R}$ of
  homotopical Reedy cofibrant diagrams $I \to \cat{C}$ and natural
  transformations is a cofibration category with levelwise weak equivalences
  and Reedy cofibrations.
\end{lem}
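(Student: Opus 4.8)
Since this result is cited from \cite{reedy-categories}, the plan is only to outline a proof. I would verify all three cofibration category axioms for $\cat{C}^I_\text{R}$ by induction along the degree function $\deg : I \to \N$, reducing each axiom to its counterpart in $\cat{C}$ together with the standard gluing lemma for cofibration categories; the real content is the bookkeeping that keeps every construction inside the class of homotopical Reedy cofibrant diagrams.

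First I would set up the \emph{Reedy machinery}. Since every latching category $\partial(I \downarrow i)$ is finite and direct, with all of its objects of degree strictly below $\deg i$, a single induction on degree should establish: (a) if $X : I \to \cat{C}$ is Reedy cofibrant then every latching object $L_i X$ exists — it is built from the initial object by finitely many pushouts along latching maps — and every latching map $L_i X \to X(i)$ is a cofibration; (b) if $f : X \to Y$ is a Reedy cofibration between Reedy cofibrant diagrams then every $L_i X \to L_i Y$ is a cofibration and every component $X(i) \to Y(i)$ is a cofibration, since the latter factors as $X(i) \to X(i) \sqcup_{L_i X} L_i Y \to Y(i)$, a pushout of $L_i X \to L_i Y$ followed by the relative latching map; (c) the formation of latching objects commutes with the levelwise pushouts and levelwise factorizations constructed below. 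These facts are the content of \cite[Ch.~9]{reedy-categories}; getting this interleaved induction right is the step I expect to be the main obstacle, precisely because latching objects need not exist in a bare cofibration category, so their existence has to be proved simultaneously with the stability statements that use it.

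Granting the machinery, the axioms become short. The constant diagram at the initial object of $\cat{C}$ is homotopical and Reedy cofibrant, it is initial in $\cat{C}^I_\text{R}$, and a map out of it is a Reedy cofibration because its relative latching maps are the latching maps of the target. A levelwise isomorphism is a levelwise weak equivalence whose latching maps, and hence relative latching maps, are isomorphisms, so it is an acyclic cofibration. For a composite $X \to Y \to Z$ of Reedy cofibrations I would factor the relative latching map at $i$ as $X(i) \sqcup_{L_i X} L_i Z \to Y(i) \sqcup_{L_i Y} L_i Z \to Z(i)$: using (c) to rewrite the source as $(X(i) \sqcup_{L_i X} L_i Y) \sqcup_{L_i Y} L_i Z$, the first map is a pushout of the relative latching map of $X \to Y$ and the second is that of $Y \to Z$, so both are cofibrations. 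For the pushout axiom, given a Reedy cofibration $X \to Y$ and a map $X \to X'$ I would form the levelwise pushout $Y'(i) = X'(i) \sqcup_{X(i)} Y(i)$; by (c), $L_i Y' = L_i X' \sqcup_{L_i X} L_i Y$, and the latching map $L_i Y' \to Y'(i)$ is a pushout of $L_i X' \to X'(i)$ followed by a pushout of the relative latching map of $X \to Y$, so $Y'$ is Reedy cofibrant and $X' \to Y'$ is a Reedy cofibration; $Y'$ is homotopical because each $X(i) \to Y(i)$ is a cofibration by (b), so the gluing lemma applies to the pushout squares sitting over each weak equivalence of $I$. If in addition $X \to Y$ is a levelwise weak equivalence then each $X(i) \to Y(i)$, and hence each $X'(i) \to Y'(i)$, is an acyclic cofibration, so $X' \to Y'$ is an acyclic cofibration.

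Finally, to factor an arbitrary $f : X \to Y$ I would build $Z$ with $X \to Z \to Y$ by induction on degree: having defined $Z$ below degree $n$ compatibly with $X$ and $Y$, I treat the objects $i$ of degree $n$ independently (a direct category has no non-identity morphism between objects of equal degree), assemble the data already constructed into a map $X(i) \sqcup_{L_i X} L_i Z \to Y(i)$, and use the factorization axiom in $\cat{C}$ to write it as a cofibration $X(i) \sqcup_{L_i X} L_i Z \to Z(i)$ followed by a weak equivalence $Z(i) \to Y(i)$. Then the latching map of $Z$ at $i$ and the relative latching map of $X \to Z$ at $i$ are cofibrations by construction, so $X \to Z$ is a Reedy cofibration into a Reedy cofibrant diagram and $Z \to Y$ is a levelwise weak equivalence; $Z$ is homotopical by the 2-out-of-3 property in $\cat{C}$, applied for each weak equivalence $i \to j$ of $I$ to the square with edges $Z(i) \to Z(j)$, $Z(j) \to Y(j)$, $Z(i) \to Y(i)$, and $Y(i) \to Y(j)$. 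With all three axioms checked, $\cat{C}^I_\text{R}$ is a cofibration category with levelwise weak equivalences and Reedy cofibrations.
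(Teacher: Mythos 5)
The paper gives no proof of this lemma---it is quoted verbatim from the cited reference---so there is no internal argument to compare against; measured against the standard proof in the literature, your outline is correct and follows the same route. The three axioms are reduced exactly as you describe, and the two places where you defer work are the two places where the real work lives: the interleaved induction establishing that latching objects of Reedy cofibrant diagrams exist (as iterated pushouts along cofibrations over the finite latching categories) together with the stability of cofibrations under passage to latching objects, and the gluing lemma for cofibration categories, which is not an axiom and itself requires a derivation from factorization and 2-out-of-6. Since you flag both explicitly rather than silently assuming them, I see no gap beyond the acknowledged incompleteness of an outline.
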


\begin{lem}[{\cite[Lemma 2.18]{szumilo-frames}}]\label{lem:reedy-modification}
  Let $I \hookrightarrow J$ be a sieve of direct categories and let $X : J \to
  \cat{C}$ be a diagram such that the restriction $X \restriction I$ is Reedy
  cofibrant. Then there exists a Reedy cofibrant diagram $\tilde{X} : J \to
  \cat{C}$ together with a weak equivalence $\tilde{X} \to X$ whose restriction
  to $I$ is the identity map. In particular $\tilde{X} \restriction I = X
  \restriction I$.
\end{lem}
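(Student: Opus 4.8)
The plan is to construct $\tilde{X}$ by induction over the degree filtration of $J$, leaving $X$ untouched on the sieve $I$ and performing a Reedy cofibrant replacement only on the objects outside $I$. Write $J_{<n}$ for the full subcategory of $J$ on objects of degree $< n$. The structural fact that makes this work is that for $i \in I$ the latching category $\partial(J \downarrow i)$ coincides with $\partial(I \downarrow i)$ and all of its objects again lie in $I$ --- this is exactly the closure of a sieve under precomposition, together with the fact that in a direct category every object of a latching category has strictly smaller degree. Consequently any diagram on $J$ that agrees with $X$ on $I$ has latching object $L_i X$ at $i$, with the same latching map as $X \restriction I$, which is a cofibration by hypothesis; so Reedy cofibrancy at objects of $I$ will come for free.

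For the induction, assume $\tilde{X}$ has already been defined as a Reedy cofibrant diagram on $J_{<n}$ together with a levelwise weak equivalence $p : \tilde{X}\restriction J_{<n} \to X\restriction J_{<n}$ that is the identity on $I \cap J_{<n}$. For an object $j$ of degree $n$ with $j \in I$, set $\tilde{X}(j) := X(j)$ and $p_j := \id$; by the previous paragraph the latching map is a cofibration. For $j$ of degree $n$ with $j \notin I$, every object of $\partial(J \downarrow j)$ has degree $< n$, so $\tilde{X}$ is already defined on it and the latching object $L_j \tilde{X}$ exists (by the theory of Reedy cofibrant diagrams in a cofibration category, where latching objects are assembled by an inductive skeletal construction using pushouts along cofibrations). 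The map $p$ induces $L_j \tilde{X} \to L_j X$; composing with the latching map of $X$ gives $L_j \tilde{X} \to X(j)$, and factoring this by the factorisation axiom as $L_j \tilde{X} \rightarrowtail \tilde{X}(j) \xrightarrow{\sim} X(j)$ supplies the value $\tilde{X}(j)$, the latching map, and $p_j$.

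It remains to extend $\tilde{X}$ to morphisms with target of degree $n$. A non-identity morphism $f : k \to j$ with $\deg j = n$ is an object of $\partial(J \downarrow j)$, and we let $\tilde{X}(f)$ be the corresponding coprojection $\tilde{X}(k) \to L_j \tilde{X}$ followed by the latching map; when $j \in I$ this recovers $X(f)$. Functoriality of $\tilde{X}$ and naturality of $p$ --- that is, $p_j \circ \tilde{X}(f) = X(f) \circ p_k$ --- then follow from the universal property of $L_j \tilde{X}$ and the naturality of the comparison $L_j \tilde{X} \to L_j X$. Running over all $n$ produces the desired diagram $\tilde{X} : J \to \cat{C}$ and weak equivalence $\tilde{X} \to X$, with $\tilde{X}\restriction I = X\restriction I$ on the nose.

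The difficulty is not concentrated in any single step but in keeping the induction honest: one must simultaneously transport the diagram, the weak equivalence and the Reedy cofibrancy data through each degree, and be careful that the latching objects used in the factorisation genuinely exist and that their construction --- via the skeletal filtration inside a single degree --- is compatible with the morphisms being defined. Once this scaffolding is in place, the sieve property handles the $I$-part and the factorisation axiom handles its complement with nothing further to check.
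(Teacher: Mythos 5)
Your construction is correct and is essentially the standard argument for this result; the paper itself gives no proof but defers entirely to the cited reference (Szumi\l{}o), whose proof proceeds by exactly this degree-wise induction, using the sieve property to leave $I$ untouched and the factorisation axiom to replace objects outside $I$. One small imprecision: since $X$ is not assumed Reedy cofibrant away from $I$ and a cofibration category need not have general colimits, the latching object $L_j X$ may fail to exist, so you should not define the map $L_j\tilde{X} \to X(j)$ by passing through $L_j X$; instead obtain it directly from the universal property of $L_j\tilde{X}$ applied to the cocone $\tilde{X}(k) \xrightarrow{p_k} X(k) \xrightarrow{X(f)} X(j)$, after which the rest of your argument goes through unchanged.
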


To any cofibration category $\cat{C}$ one can associate a quasicategory by a
construction called the quasicategory of frames $\fNerve(\cat{C})$, built
on homotopical Reedy-cofibrant diagrams of the following shape:

\begin{definition}[\cite{szumilo-two-models}]
  Let $I$ be a homotopical category, then $DI$ is the category with objects
  given by functors $[m] \to I$ and morphisms between
  $\varphi : [m] \to I$ and $\psi : [n] \to I$ given
  by injective maps $\sigma: [m]
  \hookrightarrow [n]$ such that $\varphi = \psi \circ \sigma$.
  The forgetful functor $DI \to I$ sends $x : [n] \to I$ to
  $x(n)$ and creates weak equivalences in $DI$, making $DI$ into a
  homotopical category.
\end{definition}

For any homotopical category $I$, $DI$ is straightforwardly a direct category
by sending $[n] \to I$ to $n$. The latching category $\partial(DI \downarrow
i)$ for some object $i: \lbrack n \rbrack \to I$ consists of restrictions of
$i$ to proper, non-empty subsets of $\lbrack n \rbrack$. Hence $DI$ has finite
latching categories and for any homotopical functor $f : I \to J$ the functor
$Df : DI \to DJ$ induces isomorphisms on latching categories.

\begin{definition}[\cite{szumilo-two-models}]
  Let $\cat{C}$ be a cofibration category. The \defn{quasicategory of frames}
  $\fNerve(\cat{C})$ is the simplicial set where ${\fNerve(\cat{C})}_n$
  is the set of homotopical, Reedy cofibrant diagrams $D[n] \to \cat{C}$ and
  simplicial maps are induced by precomposition.
\end{definition}

\begin{thm}[{\cite[Theorem 3.3]{szumilo-two-models}}]
  Let $\cat{C}$ be a cofibration category. Then $\fNerve(\cat{C})$
  is a finitely cocomplete quasicategory.
\end{thm}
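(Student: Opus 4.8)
The plan is to verify Joyal's horn-filling criterion: $\fNerve(\cat{C})$ is a quasicategory precisely when it has the right lifting property against the inner horn inclusions $\Lambda^n_k \hookrightarrow \Delta^n$ with $0 < k < n$, and being a \emph{finitely cocomplete} quasicategory amounts to the same kind of right lifting property against finitely many further inclusions of finite simplicial sets (those detecting the existence of an initial object and of pushout cocones, hence of all finite colimit cocones). By the defining description of $\fNerve(\cat{C})_m$ as the set of homotopical Reedy cofibrant diagrams $D[m] \to \cat{C}$, together with the remark that each $D\varphi$ induces isomorphisms on latching categories, a horn $\Lambda^n_k \to \fNerve(\cat{C})$ is the same thing as a homotopical Reedy cofibrant diagram $X : D\Lambda^n_k \to \cat{C}$, where $D\Lambda^n_k \subseteq D[n]$ is the full subcategory spanned by the functors $x : [m] \to [n]$ with $\operatorname{im}(x) \notin \{[n],\, [n]\setminus\{k\}\}$ (equivalently, the colimit of the $D[\sigma]$ over the faces $\sigma$ of $[n]$ present in the horn). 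A filler is then a homotopical Reedy cofibrant extension of $X$ along $D\Lambda^n_k \hookrightarrow D[n]$, and the cocompleteness liftings take the same form with $D[n]$ replaced by $D$ of the relevant cone-shaped poset.

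First I would establish the combinatorial input. Every morphism of $D[n]$ is an injection $[m] \hookrightarrow [m']$ sitting over $[n]$, and along it the image in $[n]$ can only shrink; since $D\Lambda^n_k$ is the full subcategory on the $x$ with $\operatorname{im}(x) \notin \{[n],\, [n]\setminus\{k\}\}$, it is closed under precomposition and is therefore a sieve of direct categories with finite latching categories. Moreover, for every $x \in D[n]$ the comma category $D\Lambda^n_k \downarrow x$ is finite — its objects are parametrised by the finitely many injections $[p] \hookrightarrow [m]$ — it is again a direct category, and because its latching categories reduce to those of $D\Lambda^n_k$, the restriction of the Reedy cofibrant diagram $X$ to it is again Reedy cofibrant. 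In particular the colimit of $X$ over $D\Lambda^n_k \downarrow x$ exists in $\cat{C}$: it is a finite colimit of a Reedy cofibrant diagram, built as an iterated pushout along cofibrations.

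With this in hand I would construct the filler in two moves. First, extend $X$ to a homotopical — but not yet Reedy cofibrant — diagram $Y : D[n] \to \cat{C}$ by left Kan extension along $D\Lambda^n_k \hookrightarrow D[n]$: the values $Y(x) = \colim_{D\Lambda^n_k \downarrow x} X$ exist by the previous paragraph and assemble into a functor with $Y \restriction D\Lambda^n_k = X$. Second, correct $Y$ to a Reedy cofibrant diagram by applying Lemma~\ref{lem:reedy-modification} to the sieve $D\Lambda^n_k \hookrightarrow D[n]$ and the diagram $Y$, whose restriction to the sieve is the already Reedy cofibrant $X$: this produces a Reedy cofibrant $\bar X : D[n] \to \cat{C}$ together with a levelwise weak equivalence $\bar X \to Y$ and $\bar X \restriction D\Lambda^n_k = X$. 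Since $Y$ is homotopical, the 2-out-of-3 property of weak equivalences forces $\bar X$ to be homotopical as well, so $\bar X$ is the desired inner-horn filler. The cocompleteness statement follows by running the same two moves for the cone shapes: the left Kan extension value at the cone point, transported along $\bar X \to Y$, is a finite homotopy colimit in $\cat{C}$, which is what makes the resulting simplex a genuine colimit cocone.

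The main obstacle is the claim, used in the first move, that the left Kan extension $Y$ is homotopical, i.e. that it sends the weak equivalences of $D[n]$ to weak equivalences of $\cat{C}$. This is exactly where the hypothesis $0 < k < n$ enters: the same construction applied to an outer horn would fill every horn and make $\fNerve(\cat{C})$ a Kan complex, which it is not. The weak equivalences of $D[n]$ are the morphisms $x \to y$ whose two objects have the same final vertex in $[n]$, so one must check that for each such morphism the induced comparison of comma categories $D\Lambda^n_k \downarrow x \to D\Lambda^n_k \downarrow y$ is homotopy cofinal enough that the induced map of iterated pushouts is again a weak equivalence; this uses both that $\cat{C}^{D[n]}_{\mathrm R}$ is a cofibration category, so these pushouts are homotopy invariant, and that deleting an \emph{interior} face $d_k$ rather than $d_0$ or $d_n$ leaves intact the final-vertex data that detects weak equivalences. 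Alternatively, this entire step can be repackaged as the assertion that restriction along the sieve $D\Lambda^n_k \hookrightarrow D[n]$ is an exact functor of cofibration categories that is surjective on objects, after which the filler is extracted by the general theory of exact functors between cofibration categories.
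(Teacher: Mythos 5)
This theorem is not proved in the paper at all: it is quoted from Szumi\l{}o (\cite[Theorem 3.3]{szumilo-two-models}), so there is no internal proof to compare against, and your attempt has to stand on its own.

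It does not. The fatal step is the claim that the left Kan extension $Y$ along the sieve $D\Lambda^n_k \hookrightarrow D[n]$ is homotopical, which you correctly flag as ``the main obstacle'' but then only gesture at. It is in fact false. Take $n = 2$, $k = 1$, and the object $\seq{0,2} \in D[2]$, which is not in $D\Lambda^2_1$. The comma category $D\Lambda^2_1 \downarrow \seq{0,2}$ is the discrete category on $\seq{0}$ and $\seq{2}$, so $Y(\seq{0,2}) = X(\seq{0}) \sqcup X(\seq{2})$. The inclusion $\seq{2} \hookrightarrow \seq{0,2}$ preserves the maximum, hence is a weak equivalence of $D[2]$, but $X(\seq{2}) \to X(\seq{0}) \sqcup X(\seq{2})$ is essentially never a weak equivalence (in $\Ch$, take both values to be $\Z$). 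Since the Reedy modification of Lemma~\ref{lem:reedy-modification} produces a levelwise weak equivalence $\bar X \to Y$, the modified diagram $\bar X$ is homotopical if and only if $Y$ is, so the correction step cannot repair this. Being discrete, the comma category also gives the same answer for a homotopy Kan extension, so no cofinality argument rescues the construction; inner versus outer plays no role here, contrary to your diagnosis. What is actually needed at $\seq{0,2}$ is something like a cylinder on the zigzag $X(\seq{0}) \to X(\seq{01}) \leftarrow X(\seq{1}) \to X(\seq{12}) \leftarrow X(\seq{2})$ supplied by the horn --- an object admitting a cofibration from $X(\seq{0}) \sqcup X(\seq{2})$ \emph{and} a weak equivalence from $X(\seq{2})$ --- and making such choices coherently over all of $D[n]$ is the real content of the theorem; in Szumi\l{}o's proof this is exactly what the subdivision categories $\Sd L$ and the acyclic-fibration lemmas (of which Lemma~\ref{lem:sd-lift} is an instance) are designed to do. Separately, the cocompleteness half is dispatched in one sentence (``run the same two moves for the cone shapes''), but the existence of finite colimits is not a bare lifting property against finitely many monomorphisms --- the filler must additionally be a colimit cocone, a universal property that has to be verified --- so that part of the argument is also missing.
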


The category $D[0]$ is isomorphic to the category $\Delta_\sharp$ of finite
ordinals and injective order-preserving maps, and every map in $D[0]$ is a weak
equivalence. Hence the $0$-simplices of $\fNerve(\cat{C})$ for some cofibration
category $\cat{C}$ are precisely the homotopy constant Reedy cofibrant
cosemisimplicial resolutions $\Delta_\sharp \to \cat{C}$. This can be seen as a
cosemisimplicial version of a frame in a model category~\cite[Chapter
5]{hovey-model-categories}, which motivates the name.

For a partially ordered set $I$, define $\Sd I$ to be the full subcategory of
$DI$ in which objects are the injective order-preserving maps $[k] \to I$. Then
we have a useful lifting property:

\begin{lem}\label{lem:sd-lift}
  Let $K \subseteq L$ be an inclusion of partially ordered sets, $I
  \hookrightarrow J$ a sieve of direct homotopical categories with finite
  latching categories, and $\cat{C}$ a cofibration category. Then any square
  \[
    \begin{tikzcd}
      I \ar[hookrightarrow]{d} \ar{r}{X} & \cat{C}_\text{R}^{DL} \ar{d}{p} \\
      J \ar{r}{Y} & \cat{C}_\text{R}^{(DK \cup \Sd L)}
    \end{tikzcd}
  \]
  in which $X$ and $Y$ are Reedy cofibrant diagrams admits a Reedy cofibrant
  lift $J \to \cat{C}_\text{R}^{DK}$.
\end{lem}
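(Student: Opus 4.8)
The plan is to strip off the index category $J$ by an outer induction, reducing to a single extension problem along the sieve $W := DK \cup \Sd L \hookrightarrow DL$, and then to solve that problem by an inner induction whose new diagram-values are generalized mapping cylinders (so that, in particular, the desired lift $J \to \cat{C}_\text{R}^{DL}$ fills the square).

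First I would set up the reduction. Since $I \hookrightarrow J$ is a sieve of direct categories, one builds the lift $Z \colon J \to \cat{C}_\text{R}^{DL}$ by induction on the degree of the objects of $J \setminus I$: the latching category of an object $j$ lies in strictly smaller degrees (or inside $I$), so the diagram is already defined on $\partial(J \downarrow j)$ at stage $j$, and objects of equal degree carry no non-identity morphisms between them. Restriction along the sieve $W \hookrightarrow DL$ makes $p$ exact and preserves the latching objects of $J$-indexed Reedy cofibrant diagrams, so at stage $j$ one is given the latching object $M := L_j Z \in \cat{C}_\text{R}^{DL}$ with $p(M) = L_j Y$, together with the Reedy cofibration $L_j Y \rightarrowtail Y(j)$ of $\cat{C}_\text{R}^{W}$, and must produce a Reedy cofibrant $N \in \cat{C}_\text{R}^{DL}$ with $p(N) = Y(j)$ and a Reedy cofibration $M \rightarrowtail N$ lifting $L_j Y \rightarrowtail Y(j)$; this $N$ becomes $Z(j)$. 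Moreover, once each such $N$ is a homotopical $DL$-diagram, homotopicity of $Z$ in the $J$-direction is automatic: for a weak equivalence $k \to j$ of $J$ one factors the comparison $Z(k)(\varphi) \to N(\varphi)$ through the weak equivalences $Z(k)(\bar\varphi) \to Z(k)(\varphi)$, $Z(k)(\bar\varphi) \to N(\bar\varphi)$ and $N(\bar\varphi) \to N(\varphi)$ — where $\bar\varphi$ denotes the nondegenerate part of $\varphi$ — and applies 2-out-of-3. So it remains to solve the extension problem along $W \hookrightarrow DL$ with $N$ Reedy cofibrant and homotopical.

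For that I would induct on the dimension of the objects of $DL \setminus W$. These are exactly the non-injective $\varphi \colon [m] \to L$ whose image is not contained in $K$; factoring $\varphi$ as $[m] \twoheadrightarrow [k] \xrightarrow{\bar\varphi} L$, any order-preserving section of the surjection is a morphism $\bar\varphi \to \varphi$ of $DL$ that lies in $\partial(DL \downarrow \varphi)$ and is a weak equivalence of $DL$ (since $\bar\varphi$ and $\varphi$ share a top vertex, $DL \to L$ sends it to an identity). Hence any homotopical Reedy cofibrant extension is forced to send $\varphi$ to a generalized relative mapping cylinder. Keeping $N = Y(j)$ on $W$, at a new object $\varphi$ I form the pushout of cofibrations $P_\varphi := M(\varphi) \sqcup_{L_\varphi M} L_\varphi N$ — here $L_\varphi M \rightarrowtail M(\varphi)$ is a cofibration because $M$ is Reedy cofibrant, and $L_\varphi M \to L_\varphi N$, being the colimit over $\partial(DL\downarrow\varphi)$ of the map $M \rightarrowtail N$ (a Reedy cofibration of latching-category diagrams by the inductive hypothesis), is a cofibration — and then define $N(\varphi)$ via the factorization axiom as the target of a cofibration $P_\varphi \rightarrowtail N(\varphi)$ followed by a weak equivalence onto an object equivalent to $\bar N(\bar\varphi)$, arranged so that the structure maps into $N(\varphi)$ from all weakly-equivalent faces of $\varphi$ become weak equivalences. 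Then the relative latching map at $\varphi$ is $P_\varphi \rightarrowtail N(\varphi)$ and the latching map is $L_\varphi N \rightarrowtail P_\varphi \rightarrowtail N(\varphi)$, so $N$ is Reedy cofibrant and $M \rightarrowtail N$ a Reedy cofibration; nothing on $W$ was disturbed, so $p(N) = Y(j)$ and $p(M \rightarrowtail N) = (L_j Y \rightarrowtail Y(j))$.

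The hard part is the coherence: these cylinders must be chosen simultaneously at all objects of $DL \setminus W$ so that $N$ is genuinely a functor and homotopical, i.e. that \emph{every} weak-equivalence morphism of $DL$ with target $\varphi$, not merely the $\bar\varphi \to \varphi$, is sent to a weak equivalence — and independent section-by-section choices do not give commuting triangles. So the factorizations have to be performed relative to the functorial data $M$ and to the part of $N$ already built in lower dimensions, letting the naturality of $M$ and the gluing lemma for cofibration categories supply the missing coherences; concretely this is the inductive relative mapping cylinder whose explicit form is worked out in Section~\ref{sec:construction}. With $N$ available at every stage, the diagram $Z \colon J \to \cat{C}_\text{R}^{DL}$ assembled from the outer induction is Reedy cofibrant, restricts to $X$ on $I$ and to $Y$ along $p$, and is homotopical by the remark in the first step — hence it is the required Reedy cofibrant lift.
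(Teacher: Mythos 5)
Your proposal sets out to reprove this lemma from first principles, whereas the paper disposes of it in one line by combining two cited results of Szumi\l{}o: restriction along $DK \cup \Sd L \hookrightarrow DL$ is an acyclic fibration of cofibration categories \cite[Lemma 3.19]{szumilo-two-models}, and acyclic fibrations lift against sieves of direct categories with finite latching categories \cite[Lemma 2.15]{szumilo-frames}. Your outer induction over the objects of $J \setminus I$ by degree is essentially a correct reproof of the second of these: at each stage the latching object is already defined, restriction is exact and computes latching objects levelwise, and the required datum is exactly a cofibration $M \rightarrowtail N$ over $L_j Y \rightarrowtail Y(j)$. That part is fine.

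The genuine gap is in the inner induction, i.e.\ in your reproof of the acyclic-fibration statement. At a new object $\varphi \in DL \setminus (DK \cup \Sd L)$ you form the correct pushout $P_\varphi = M(\varphi) \sqcup_{L_\varphi M} L_\varphi N$, but then you ``define $N(\varphi)$ via the factorization axiom\dots arranged so that the structure maps from all weakly-equivalent faces become weak equivalences'' without ever naming the morphism out of $P_\varphi$ that is being factored, and without proving that any choice makes the canonical map $N(\bar\varphi) \to N(\varphi)$ (let alone every weak equivalence $\psi \to \varphi$) a weak equivalence. This is precisely the content of \cite[Lemma 3.19]{szumilo-two-models}, and asserting that ``the naturality of $M$ and the gluing lemma supply the missing coherences'' does not discharge it. The pointer to Section~\ref{sec:construction} does not help either: that section constructs explicit resolutions inside a \emph{pretriangulated dg-category} from a simplex of the dg-nerve, while the present lemma concerns an arbitrary cofibration category $\cat{C}$, so no explicit cylinder is available. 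Incidentally, the functoriality you worry about is not the problem --- every morphism of $DL$ into $\varphi$ from a lower-degree object factors through $L_\varphi N$ by construction, so $N$ is automatically a functor; the only issue is homotopicity, and that is the step you have left unproved. Either supply the argument of Szumi\l{}o's lemma or cite it, as the paper does.
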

\begin{proof}
  By~\cite[Lemma 3.19]{szumilo-two-models} the restriction map $p$ is an
  acyclic fibration of cofibration categories, which satisfies the required
  lifting property by~\cite[Lemma 2.15]{szumilo-frames}.
\end{proof}

\begin{lem}\label{lem:exact-product-d}
  Restriction of diagrams in a cofibration category along the canonical map
  $D([k] \times [m]) \to D[k] \times D[m]$ preserves Reedy cofibrant diagrams.
\end{lem}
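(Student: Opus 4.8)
The plan is to verify Reedy cofibrancy of $F^*X$ object by object, where $F : D([k]\times[m]) \to D[k]\times D[m]$ is the canonical functor $(\theta \colon [p] \to [k]\times[m]) \mapsto (\pi_1\theta, \pi_2\theta)$ and $X$ is a Reedy cofibrant diagram on the target, proceeding by induction on the dimension of the object. Fix $c = (\theta \colon [p] \to [k]\times[m])$ with components $\theta_1, \theta_2$, so $F(c) = (\theta_1,\theta_2)$. Recall that $\partial(D([k]\times[m])\downarrow c)$ is the poset $Q$ of proper nonempty subsets $S \subsetneq [p]$ (with $\theta$ restricted to $S$), while $\partial((D[k]\times D[m])\downarrow F(c))$ is the poset $P$ of pairs $(S,T)$ of nonempty subsets of $[p]$ with $(S,T) \ne ([p],[p])$, and $F$ embeds $Q$ into $P$ as the diagonal $S \mapsto (S,S)$, compatibly with the source functors. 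Writing $Y$ for the diagram $(S,T) \mapsto X(\theta_1|_S, \theta_2|_T)$ on $P$, this yields, once the leftmost colimit is known to exist, a factorisation of the latching map of $F^*X$ at $c$ as
\[ L_c(F^*X) \;=\; \colim_{Q} Y \longrightarrow \colim_{P} Y \;=\; L_{F(c)}X \longrightarrow X(F(c)), \]
where $\colim_P Y = L_{F(c)}X$ exists and the last map is a cofibration because $X$ is Reedy cofibrant. So it suffices to (i) show $L_c(F^*X)$ exists and (ii) show the first map is a cofibration.

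For (i), I would filter $Q$ by the size of the subset. The colimit over the singletons is a finite coproduct, which exists in any cofibration category; and passing from size $q$ to size $q+1$ adjoins each subset $S$ of size $q+1$ as a maximal object, changing the colimit by a pushout along the map $L_{\theta|_S}(F^*X) \to X(\theta_1|_S,\theta_2|_S)$. Indeed, the colimit of $Y$ over the proper nonempty subsets of $S$ is precisely the latching object $L_{\theta|_S}(F^*X)$, which exists by the inductive hypothesis since $\theta|_S$ has dimension $q < p$, and whose latching map is a cofibration, again by the inductive hypothesis composed with the Reedy latching map of $X$. Thus $L_c(F^*X)$ is built from the initial object by finitely many pushouts of cofibrations and in particular exists.

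For (ii), I would filter $P$ by adjoining the off-diagonal objects $(S,T) \notin Q$ one at a time in order of increasing $|S| + |T|$. Distinct objects of $P$ of equal degree are incomparable, and every object strictly below an off-diagonal $(S_0,T_0)$ has strictly smaller degree — and is either again off-diagonal, hence already adjoined, or of the form $(S,S)$ with $S$ contained in $S_0$ or in $T_0$, hence proper in $[p]$ and so in $Q$ — so at each step the newly adjoined object is maximal and its down-set is already present. Adjoining $(S_0,T_0)$ therefore changes the colimit by a pushout along the map from the colimit of $Y$ over the objects strictly below $(S_0,T_0)$, which is exactly the latching object $L_{(\theta_1|_{S_0},\theta_2|_{T_0})}X$, to $Y(S_0,T_0)$; this is a cofibration since $X$ is Reedy cofibrant. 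Each step is thus a pushout of a cofibration and hence a cofibration, so the composite $L_c(F^*X) \to L_{F(c)}X$ is a finite composite of cofibrations and hence a cofibration. This closes the induction.

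The conceptual content is light — everything reduces to stability of cofibrations under pushout and composition and to the existence of finite coproducts — so the main obstacle I expect is the combinatorial bookkeeping in step (ii): checking that the chosen order of adjunction really presents $P$ as an iterated ``adjoin a maximal object'' extension of $Q$, and that each intermediate colimit that appears is genuinely the latching object of $X$ at the corresponding object of $D[k]\times D[m]$. It may streamline the write-up to first record the elementary fact that adjoining a maximal object to a finite poset changes a colimit by a single pushout against the colimit over the strictly-smaller objects, and then apply it uniformly in both (i) and (ii).
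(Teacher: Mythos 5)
Your argument is correct, but it is worth noting that the paper does not actually prove this lemma: it is disposed of by a citation to Szumi\l{}o's Lemma~3.11, which establishes the analogous statement for $D(J\times K)\to DJ\times DK$ with arbitrary homotopical posets $J,K$ by essentially the same kind of latching-object analysis. What you supply is a self-contained proof of the special case needed here. Your key observations are all sound: the latching category of $D([k]\times[m])$ at $\theta$ embeds as the diagonal $S\mapsto(S,S)$ into the latching category of $D[k]\times D[m]$ at $F(\theta)$, compatibly with the source functors, so the latching map of $F^*X$ factors as $\colim_Q Y\to\colim_P Y\to X(F(c))$; the existence of $\colim_Q Y$ follows by induction on dimension using pushouts along (composites with) latching maps; and the filtration of $P\setminus Q$ by total degree adjoins each off-diagonal $(S_0,T_0)$ as a maximal object whose full down-set is already present, so that each step is a pushout along a latching map of $X$. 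One small point of wording: for a diagonal $(S,S)$ below an off-diagonal $(S_0,T_0)$ you have $S\subseteq S_0\cap T_0$, and properness of $S$ in $[p]$ follows because $S_0\neq T_0$ forces at least one of them to be a proper subset --- your ``contained in $S_0$ or in $T_0$'' elides this half-line of justification but the conclusion is right. The trade-off between the two routes is the expected one: the citation buys brevity and generality (arbitrary homotopical posets, and Szumi\l{}o also needs the homotopical statement), while your direct argument makes the combinatorics explicit and keeps the paper self-contained at the cost of the bookkeeping you flag yourself.
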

\begin{proof}
  By~\cite[Lemma 3.11]{szumilo-frames}.
\end{proof}

The quasicategory of frames $\fNerve(\cat{C})$ has more $0$-simplices than the
cofibration category $\cat{C}$ has objects, since homotopy-constant Reedy
cofibrant resolutions are not neccessarily unique. However, any two resolutions
of the same object (or of a pair of weakly equivalent objects) are equivalent
as objects of the quasicategory:

\begin{lem}\label{lem:frames-equivalent}
  Let $\cat{C}$ be a cofibration category, $X, Y : D[0] \to \cat{C}$
  homotopical Reedy cofibrant diagrams such that $X(\seq{0})$ and
  $Y(\seq{0})$ are weakly equivalent. Then $X$ and $Y$ are equivalent
  as $0$-simplices of $\fNerve(\cat{C})$.
\end{lem}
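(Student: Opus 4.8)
The plan is to exhibit a $1$-simplex of $\fNerve(\cat{C})$, i.e. a homotopical Reedy cofibrant diagram $W : D[1] \to \cat{C}$ with $W \circ D\seq{0} = X$ and $W \circ D\seq{1} = Y$, and then to show that the resulting edge is an equivalence. Two reductions come first. Since $\fNerve(\cat{C})$ is a quasicategory, equivalence of $0$-simplices is an equivalence relation, so it suffices to connect $X$ and $Y$ by a chain of equivalences; and every object $A$ of $\cat{C}$ admits a homotopical Reedy cofibrant diagram $D[0] \to \cat{C}$ with value $A$ at $\seq{0}$ — apply Lemma~\ref{lem:reedy-modification} to the constant diagram $c_A$ and the sieve $\{\seq{0}\} \hookrightarrow D[0]$, observing that the result is homotopical by two-out-of-three since it receives a levelwise weak equivalence to $c_A$. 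Inserting such a resolution of each intermediate object in a zigzag of weak equivalences between $X(\seq{0})$ and $Y(\seq{0})$ then reduces the lemma to the case where there is a single weak equivalence $w : A \to B$ with $X$ resolving $A$ and $Y$ resolving $B$ (the two orientations being exchanged by symmetry of the relation); taking $w = \id$ this also covers the assertion, used at the ends of the chain, that any two resolutions of an object are equivalent.

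For this case I would build the connecting edge as a generalised mapping cylinder. Recall that in a homotopical Reedy cofibrant diagram every structure map is a cofibration and, being induced by a morphism of $D[0]$ (all of which are weak equivalences), is an acyclic cofibration. Define $W_0 : D[1] \to \cat{C}$ to agree with $X$ on the all-zero objects $\seq{0^a} = \seq{0,\dots,0}$ and with $Y$ on the all-one objects $\seq{1^b} = \seq{1,\dots,1}$, and on a mixed object ($a, b \geq 1$) to be the pushout
\[ W_0(\seq{0^a 1^b}) \;=\; W_0(\seq{0^a}) \sqcup_A W_0(\seq{1^b}) \]
along the structure map $A \to W_0(\seq{0^a})$ and the composite $A \xrightarrow{w} B \to W_0(\seq{1^b})$; these pushouts exist because $A \to W_0(\seq{0^a})$ is a cofibration. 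A morphism of $D[1]$ between mixed objects restricts to morphisms of $D[0]$ on its zero- and one-blocks and so induces a map of pushouts; a morphism from an all-zero (resp. all-one) object into a mixed one factors through the inclusion of its zero- (resp. one-) block and is sent to the corresponding pushout leg. One checks these assignments make $W_0$ a functor whose restriction along the sieve $D[0] \sqcup D[0] \hookrightarrow D[1]$ picking out the all-zero and all-one objects is $X$ and $Y$ on the two summands. The object $W_0(\seq{0^a 1^b})$ is precisely the mapping cylinder of $W_0(\seq{0^a}) \leftarrow A \to B \to W_0(\seq{1^b})$, which is the intuition behind the construction.

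Then I would check that $W_0$ is homotopical: the weak equivalences of $D[1]$ are exactly the morphisms preserving the last vertex — those internal to the all-zero part, those internal to the all-one part, those between mixed objects, and those from an all-one object to a mixed one. On the first two $W_0$ restricts to the homotopical diagrams $X$ and $Y$; on a morphism between mixed objects $W_0$ is the induced map of pushouts of a levelwise weak equivalence of spans one leg of which, namely $A \to W_0(\seq{0^a})$, is a cofibration, so the gluing lemma applies; and a morphism from an all-one object to a mixed one is a morphism of $Y$ followed by a pushout leg of the acyclic cofibration $A \to W_0(\seq{0^a})$, hence a weak equivalence. Applying Lemma~\ref{lem:reedy-modification} to the sieve $D[0] \sqcup D[0] \hookrightarrow D[1]$ and $W_0$ (whose restriction to the sieve is Reedy cofibrant) now yields a Reedy cofibrant $W : D[1] \to \cat{C}$ with a levelwise weak equivalence $W \to W_0$ that is the identity on the two copies of $D[0]$; thus $W \circ D\seq{0} = X$, $W \circ D\seq{1} = Y$, and $W$ is homotopical by two-out-of-three, hence a $1$-simplex of $\fNerve(\cat{C})$ from $X$ to $Y$.

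Finally, $W$ is an equivalence: the map $W(\seq{0}) \to W(\seq{0,1})$ maps under $W \to W_0$ to the pushout leg $W_0(\seq{0}) \to W_0(\seq{0}) \sqcup_A W_0(\seq{1}) \cong B$, which is $w$, so it is a weak equivalence by two-out-of-three; together with the weak equivalence $W(\seq{1}) \to W(\seq{0,1})$ coming from homotopicality this shows that both legs of $W \restriction \Sd[1]$ are weak equivalences. An edge of $\fNerve(\cat{C})$ with this property represents an isomorphism in the homotopy category and is therefore an equivalence; this can be read off from the identification of the homotopy category of $\fNerve(\cat{C})$ with that of $\cat{C}$~\cite{szumilo-frames}, or established by hand by building a homotopy inverse with the same cylinder recipe and filling the two comparison triangles, again via Lemma~\ref{lem:reedy-modification}. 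I expect this last step — passing from the explicit edge to the fact that it is an equivalence — to be the main obstacle; everything before it is a direct, if bookkeeping-heavy, verification that $W_0$ is a well-defined homotopical functor.
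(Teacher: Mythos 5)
Your strategy---build an explicit $1$-simplex $W$ of $\fNerve(\cat{C})$ connecting $X$ and $Y$ and then argue it is an equivalence---matches the paper's, and your preliminary reductions (existence of resolutions via Lemma~\ref{lem:reedy-modification}, reduction of a zigzag to a single weak equivalence) are fine. The gap is in the definition of $W_0$. For the pushout $W_0(\seq{0^a}) \sqcup_A W_0(\seq{1^b})$ to be functorial in the mixed object $\seq{0^a 1^b}$, the maps $A \to X(\seq{0^a})$ and $B \to Y(\seq{1^b})$ must be natural in $\seq{0^a}, \seq{1^b} \in D[0]$, i.e.\ you need coaugmentations $c_A \to X$ and $c_B \to Y$ from the constant diagrams. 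These do not exist for a general homotopical Reedy cofibrant resolution: there are $a$ distinct vertex inclusions $\seq{0} \hookrightarrow \seq{0^a}$, and morphisms of $D[0]$ permute them arbitrarily --- already $X(d^0) \neq X(d^1) : X(\seq{0}) \to X(\seq{0,0})$ for any genuine cylinder --- so ``the structure map $A \to W_0(\seq{0^a})$'' is not a single canonical map, and the square you need for ``a morphism between mixed objects induces a map of pushouts'' does not commute. Hence $W_0$ is not a well-defined functor on $D[1]$, and the later steps (gluing lemma, Lemma~\ref{lem:reedy-modification}) have nothing to apply to.

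The paper sidesteps exactly this problem: it prescribes the diagram only on the sieve $D(\Delta\{0\} \cup \Delta\{1\}) \cup \Sd\widehat{[1]}$, where each of $\seq{0}$, $\seq{1}$, $\seq{0,1}$ occurs once, so no compatibility between competing vertex inclusions is ever required; it then makes the $\Sd\widehat{[1]}$-part Reedy cofibrant by Lemma~\ref{lem:reedy-modification} and extends to all of $D\widehat{[1]}$ by the abstract lifting Lemma~\ref{lem:sd-lift} rather than by an explicit formula. Working over $\widehat{[1]}$ (every map a weak equivalence) also settles your last step cleanly: the resulting edge is homotopy constant, hence an equivalence by Corollary~3.14 of Szumi{\l}o. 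Your assertion that an edge both of whose legs into the $\seq{0,1}$-spot are weak equivalences is automatically an equivalence is essentially that corollary; it needs to be cited or proved (one must propagate the weak equivalences to all structure maps of $D\widehat{[1]}$ via 2-out-of-6), not read off. If you want to keep an explicit cylinder in the picture, put it only at the object $\seq{0,1}$ of $\Sd\widehat{[1]}$ and let the lifting lemma do the rest.
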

\begin{proof}
  Let $\widehat{[1]}$ be the homotopical category with underlying category $[1]$,
  in which every map is a weak equivalence.  We consider the weak
  equivalence $X(\seq{0}) \to Y(\seq{0})$ as a homotopical functor
  $\widehat{[1]} \to \cat{C}$ and precompose with the projection
  $\Sd\widehat{[1]} \to \widehat{[1]}$ to obtain a homotopical functor
  $\Sd\widehat{[1]} \to \cat{C}$. We then use Lemma~\ref{lem:reedy-modification}
  to modify this to be Reedy cofibrant while keeping the value at
  $\seq{0}$ and $\seq{1}$ the same. The resulting functor
  $\Sd\widehat{[1]} \to \cat{C}$ assembles together with $X$ and $Y$ into
  a homotopical Reedy cofibrant functor
  \[ D(\Delta\{ 0 \} \cup \Delta\{ 1 \}) \cup \Sd \widehat{[1]} \longrightarrow \cat{C}, \]
  which by Lemma~\ref{lem:sd-lift} extends to a homotopical Reedy cofibrant
  functor $D\widehat{[1]} \to \cat{C}$. So by~\cite[Corollary 3.14.]{szumilo-two-models}
  we have an equivalence of $X$ and $Y$ as $0$-simplices.
\end{proof}

\section{Pretriangulated Dg-Categories and the Dg-nerve}\label{sec:dg-categories}

The homotopy category of a stable $(\infty, 1)$-category contains traces of the
stable structure in form of a triangulation. Triangulated categories have
several technical problems, including for example the non-functoriality of
cones, that can be rectified in some cases where the triangulated category
admits a dg-enhancement~\cite{enhanced-triangulated}, i.e.\ if it is the
homology category of a dg-category with certain properties that naturally give
rise to the triangulation.
While many triangulated categories arising from an algebraic context have a
dg-enhancement, including for instance the derived category of any abelian
category~\cite{derived-category-dg-enhancement}, the stable homotopy category
does not~\cite[p. 14]{topological-triangulated}.

\begin{definition}
  A \defn{dg-category} is a category enriched in the category of chain
  complexes, i.e.\ a dg-category $\cat{C}$ consists of:
  \begin{enumerate}[noitemsep]
    \item A collection of objects $\Ob{\cat{C}}$,
    \item a chain complex $\Map{\cat{C}}(X, Y)$ for all $X, Y \in \Ob{\cat{C}}$,
    \item a chain map
      $\circ : \Map{\cat{C}}(Y, Z) \otimes \Map{\cat{C}}(X, Y) \to \Map{\cat{C}}(X, Z)$,
      for all $X, Y, Z \in \Ob{\cat{C}}$, and
    \item an element
      $\id_X \in {\Map{\cat{C}}(X, X)}_0$
     for all $X \in \Ob{\cat{C}}$,
  \end{enumerate}
  such that $\circ$ is associative and the elements $\id_X$ are the identity
  elements of $\circ$. 
  A \defn{dg-functor} $F$ between dg-categories $\cat{C}$ and $\cat{D}$ is
  a functor $\cat{C} \to \cat{D}$ of enriched categories.
\end{definition}

The identity maps are automatically cycles, since composition is a chain map and thus
\[ d\id_X = d(\id_X \circ \id_X) = d\id_X \circ \id_X + \id_X \circ d \id_X = 2d\id_X. \]

For more details on enriched category theory, see for instance~\cite[Chapter
3]{categorical-homotopy}. Dg-categories can also be defined more generally for
chain complexes of $A$-modules for any commutative ring $A$. All constructions
in this paper are independent of the ground ring, so we simplify by only
considering chain complexes over abelian groups. We will further assume all
dg-categories to be small.

The category of (small) chain complexes $\Ch$ is closed symmetric monoidal with the
tensor product as the monoidal product. Hence it can be enriched over itself,
with mapping chain complexes given by \[
  {\Map{\ChCh}(X, Y)}_n = \prod_{k \in \mathbb{Z}} \Ab(X_k, Y_{k + n}) \] and
the differential acting on $f \in {\Map{\ChCh}(X, Y)}_n$ by \[ df = d_Y \circ f
  - {(-1)}^n f \circ d_X. \]
A dg-functor $\cat{C} \to \ChCh$ will also be called a $\cat{C}$-module.
Such dg-functors take the place of presheaves in ordinary category theory
and satisfy an enriched version of the Yoneda lemma.

\begin{definition}
  Let $\cat{C}$ be a dg-category. The cycle category
  $\cat{C}_0$ of $\cat{C}$ is the category with the same objects as
  $\cat{C}$ and morphisms given by $0$-cycles of the mapping complexes:
  \[ \Hom{\cat{C}_0}(X, Y) = \{ f \in {\Map{\cat{C}}(X, Y)}_0 \mid df = 0 \}. \]
  The homology category
  $H(\cat{C})$ of $\cat{C}$ is the category with the same objects as $\cat{C}$ and
  morphisms given by the homology of the mapping complexes in degree $0$:
  \[ \Hom{H(\cat{C})}(X, Y) = H_0(\Map{\cat{C}}(X, Y)). \]
\end{definition}

We observe that $\Z \langle 0 \rangle$, the chain complex that is $\Z$ in
degree $0$ and vanishes anywhere else, is the tensor unit in the monoidal
structure of chain complexes, and chain maps $\Z \langle 0 \rangle \to C$ are
precisely the $0$-cycles of $C$, the cycle category is the same as the
underlying ordinary category in the sense of enriched category theory.

\begin{definition}
  Let $\cat{C}$ be a dg-category. A map $f : X \to Y$ in the cycle category
  $\cat{C}_0$ is a weak equivalence, if it becomes an isomorphism in the
  homology category.
\end{definition}

By unrolling the definitions, we can see that a $0$-cycle $f : X \to Y$ is a
weak equivalence if there exists a $0$-cycle $g : Y \to X$ and $1$-chains $h
\in {\Map{\cat{C}}(X, X)}_1, h' \in {\Map{\cat{C}}(Y, Y)}_1$ such that \[ f
  \circ g - \id_Y = dh', \quad g \circ f - \id_X = dh. \] In particular, weak
equivalences in $\ChCh$ are precisely the chain homotopy equivalences.

\begin{definition}
  Let $\cat{C}$ be a dg-category.
  The \defn{$n$-translation} of an object $X \in \cat{C}$ for some $n \in
  \mathbb{Z}$ is an object $X[n] \in \cat{C}$ representing the
  $n$-translated mapping complex: \[ \Map{\cat{C}}(X[n], -) \cong \Map{\cat{C}}(X, -)[-n]. \]
  The \defn{mapping cone} of a $0$-cycle $f : X \to Y$ is an object
  $\Cone(f)$ representing the mapping cone of the induced map on mapping complexes:
  \[ 
    \Map{\cat{C}}(\Cone(f), -) \cong
    \Cone(f^* : \Map{\cat{C}}(X, -) \to \Map{\cat{C}}(Y, -)).
  \]
  A dg-category $\cat{C}$ is \defn{pretriangulated} if it has a zero object, it
  admits translations of all objects, and it admits mapping cones for all
  $0$-cycles.
\end{definition}

When $\cat{C}$ is a pretriangulated dg-category, its homology category
$H(\cat{C})$ can be canonically triangulated: the shift functor is induced
by the translations, and the distinguished triangles come from the image of
mapping cone sequences.
More importantly for this paper, the cycle category of a pretriangulated
dg-category can be made into a cofibration category:

\begin{definition}
  Let $\cat{C}$ be a pretriangulated dg-category. A $0$-cycle $i : X \to Y$
  is a \defn{cofibration} if the precomposition map
  $i^* : \Map{\cat{C}}(Y, -) \longrightarrow \Map{\cat{C}}(X, -)$
  is surjective as a map of $\cat{C}$-modules and has a representable kernel.
\end{definition}

\begin{prop}[{\cite[Prop 3.2]{topological-triangulated}}]
  Let $\cat{C}$ be a pretriangulated dg-category. Then the cofibrations
  and weak equivalences make the cycle category $\cat{C}_0$ into a 
  cofibration category.
\end{prop}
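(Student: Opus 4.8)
The plan is to verify the three axioms of a cofibration category for $(\cat{C}_0, \text{weak equivalences}, \text{cofibrations})$ directly, leaning on the representability properties that define pretriangulatedness and on the enriched Yoneda lemma to translate statements about $\cat{C}$-modules into statements about objects. Throughout, I would work with the $\cat{C}$-module picture: a cofibration $i : X \to Y$ is a $0$-cycle such that $i^* : \Map{\cat{C}}(Y,-) \to \Map{\cat{C}}(X,-)$ is a levelwise-surjective map of $\cat{C}$-modules whose kernel is representable, say by an object $K$. First I would check that the homotopical category axioms hold: weak equivalences contain all identities (clear, since $\id$ is an isomorphism in $H(\cat{C})$) and satisfy 2-out-of-6. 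For 2-out-of-6, since a $0$-cycle is a weak equivalence exactly when its image in $H(\cat{C})$ is an isomorphism, and $H$ is a functor, the property reduces to the elementary fact that isomorphisms in any category satisfy 2-out-of-6 (if $gf$ and $hg$ are isos then $g$ has both a left and a right inverse, hence is an iso, hence so are $f$ and $h$).

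Next I would establish the structural axioms on cofibrations. Isomorphisms in $\cat{C}_0$ induce isomorphisms on mapping modules, which are surjective with zero (hence representable, by the zero object) kernel, and are weak equivalences, so they are acyclic cofibrations. Closure under composition: if $i : X \to Y$ and $j : Y \to Z$ are cofibrations then $(ji)^* = i^* j^*$ is a composite of levelwise surjections, hence a levelwise surjection, and its kernel sits in a short exact sequence of $\cat{C}$-modules $0 \to \ker i^* \to \ker (ji)^* \to \ker j^* \to 0$; since $\ker i^*$ and $\ker j^*$ are representable and $\cat{C}$ is pretriangulated (admits cones, hence such extensions of representables are representable — this is where the mapping cone construction enters), $\ker(ji)^*$ is representable. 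The initial object is the zero object; any map $0 \to X$ has $0^* : \Map{\cat{C}}(X,-) \to \Map{\cat{C}}(0,-) = 0$ which is trivially surjective with kernel $\Map{\cat{C}}(X,-)$ itself, representable by $X$. For closure of (acyclic) cofibrations under pushout: I would show that the pushout of $i : X \to Y$ along any $f : X \to X'$ exists in $\cat{C}_0$ by exhibiting the pushout object as a representing object for the pullback of $\cat{C}$-modules $\Map{\cat{C}}(Y,-) \times_{\Map{\cat{C}}(X,-)} \Map{\cat{C}}(X',-)$ — this pullback is again representable because the kernel of the projection to $\Map{\cat{C}}(X',-)$ is isomorphic to $\ker i^*$, which is representable, and again pretriangulatedness closes up the extension. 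Levelwise surjectivity of the pushout map is inherited from that of $i^*$ by the pullback description, and the kernel is unchanged, so it stays representable; for the acyclic case one checks the induced map is still a homology isomorphism, which follows because the relevant mapping-module sequence is degreewise short exact and $i^*$ being a quasi-isomorphism forces its (representable) kernel to be acyclic, a condition preserved by the pullback.

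For the factorisation axiom, given a $0$-cycle $f : X \to Y$ I would factor it through the mapping cylinder: set $\Cyl(f)$ to be the representing object for the appropriate module-level mapping cylinder of $f^*$, which exists because it is built from $\Map{\cat{C}}(X,-)$, $\Map{\cat{C}}(Y,-)$, and a translation via cones. The map $X \to \Cyl(f)$ is a cofibration (its induced module map is a split surjection onto $\Map{\cat{C}}(X,-)$ with representable kernel, namely a translate of $\Map{\cat{C}}(X,-)$), and the map $\Cyl(f) \to Y$ is a chain homotopy equivalence on mapping modules, hence a weak equivalence. I expect the main obstacle to be the recurring representability arguments: showing that kernels of composites, pullbacks of the module diagrams, and the cylinder/cone module constructions are all representable requires carefully assembling them from the three primitive representability hypotheses (zero object, translations, cones) via short exact sequences of $\cat{C}$-modules and the five lemma, and keeping the sign conventions straight when manipulating translated complexes and cones. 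Since this is the content of \cite[Prop 3.2]{topological-triangulated}, I would cite that source for the detailed verification rather than reproduce every diagram chase, but the outline above is the shape of the argument.
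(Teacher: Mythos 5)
The paper gives no proof of this proposition at all --- it is quoted verbatim from \cite{topological-triangulated} --- so there is nothing to compare against except the citation, which you also (correctly) fall back on. Your outline matches the standard argument of that source (verify 2-out-of-6 via functoriality of $H$, build kernels and pullbacks of representable $\cat{C}$-modules via short exact sequences and cones, factor through the mapping cylinder); the only slip is that in the kernel sequence for a composite of cofibrations the roles are transposed --- it should read $0 \to \ker j^* \to \ker (ji)^* \to \ker i^* \to 0$, since $\ker j^*$ is the submodule and $j^*$ surjects $\ker(ji)^*$ onto $\ker i^*$ --- which is harmless here because the representability lemma only needs both ends representable.
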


The following lemma allows for particular convenient inductive constructions of
objects in pretriangulated dg-categories via extensions of representable
$\cat{C}$-modules:

\begin{lem}\label{lem:dg-ses-representable}
  Let $\cat{C}$ be a pretriangulated dg-category and
  \[
    \begin{tikzcd}
      0 \ar{r} & A \ar{r} & B \ar{r} & C \ar{r} & 0
    \end{tikzcd}
  \]
  a short exact sequence of $\cat{C}$-modules such that $A$ and $C$
  are representable. Then $B$ is representable as well. Moreover,
  the map of $\cat{C}_0$ represented by $B \to C$ is a cofibration.
\end{lem}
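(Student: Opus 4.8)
The plan is to exploit the Yoneda-type correspondence between representable $\cat{C}$-modules and objects of $\cat{C}$, together with the explicit description of cofibrations in terms of representable kernels. Write $A = \Map{\cat{C}}(a, -)$ and $C = \Map{\cat{C}}(c, -)$ for objects $a, c \in \cat{C}$, using the enriched Yoneda lemma; the map $B \to C$ corresponds to no object yet, but the composite map $A \to B$ combined with the short exact sequence will pin things down. The key observation is that a short exact sequence of $\cat{C}$-modules $0 \to A \to B \to C \to 0$ exhibits $B$ as the \emph{fiber} of a surjection $B \to C$ with kernel $A$, which dually (in terms of the represented objects, if they exist) is a cofibration sequence. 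So the first step is to produce a candidate object $b$ representing $B$.

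First I would build $b$ as a mapping cone. The surjection $B \twoheadrightarrow C$ of $\cat{C}$-modules, being levelwise surjective with kernel the representable $A$, should itself come from a map in $\cat{C}_0$; more precisely, I expect that the extension class of the sequence is classified by a $0$-cycle $c \to a[1]$, or equivalently a $0$-cycle $f : a \to c'$ for some object, so that $B \simeq \Map{\cat{C}}(\Cone(f)[-1], -)$ or a similar shift. Concretely: the pretriangulated hypothesis gives us translations and cones, so from the connecting data of the short exact sequence I would extract a $0$-cycle and form the appropriate cone; then I would check that its mapping complex, which by the defining isomorphism of $\Cone$ fits into a short exact sequence of $\cat{C}$-modules with the same outer terms $A$ and $C$, is isomorphic to $B$. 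The comparison uses that an extension of $C$ by $A$ is determined up to isomorphism by its class in $\operatorname{Ext}^1$, and the relevant $\operatorname{Ext}$ group is computed by maps out of representables, hence lands in $\cat{C}$ itself.

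The second step, once $B$ is known to be representable by some $b$, is to verify that the map $b \to c$ in $\cat{C}_0$ induced by $B \to C$ is a cofibration. By definition this requires the precomposition map $\Map{\cat{C}}(c,-) \to \Map{\cat{C}}(b,-)$, i.e.\ the map $C \to B$ of $\cat{C}$-modules dual to $b \to c$ — wait, one must be careful about variance: the cofibration condition asks that $(b \to c)^* : \Map{\cat{C}}(c,-) \to \Map{\cat{C}}(b,-) = B$ is surjective with representable kernel. Given our sequence $0 \to A \to B \to C \to 0$, the map in question is the inclusion $A \hookrightarrow B$, which is not surjective; so in fact the cofibration must be represented not by $B \to C$ as an abstract claim about the forgetful-to-$\cat{C}_0$ arrow but precisely as stated, meaning the sequence should be read so that $B \to C$ corresponds on represented objects to a map whose precomposition is the \emph{surjection} $B \twoheadrightarrow C$ read in the right direction. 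Resolving this bookkeeping — matching the direction of the module maps with the direction of the represented morphisms via the contravariant Yoneda embedding — is the main obstacle. Once the variance is settled, surjectivity of the precomposition map is exactly the given surjectivity $B \to C$, and its kernel is exactly $A$, which is representable by hypothesis, so the cofibration condition holds by definition.

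The hard part will be the first step: producing the object $b$ and the isomorphism $\Map{\cat{C}}(b,-) \cong B$ of $\cat{C}$-modules in a way that is compatible with the maps to $C$. Everything else is unwinding definitions. I would expect the cleanest route is to avoid $\operatorname{Ext}$ computations and instead directly use Lemma's hypotheses: take the surjection $B \to C$, note its kernel $A = \Map{\cat{C}}(a,-)$ is representable, and realize $B$ as a pullback/extension that the pretriangulated structure (specifically cones, which provide all the needed pushouts of representables along maps from representables) is designed to represent — this is morally the statement that $\cat{C}_0$ is closed under the relevant cofiber constructions, packaged module-theoretically.
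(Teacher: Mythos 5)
Your proposal is essentially the paper's argument: the cofibration claim is exactly the definitional unwinding you arrive at (the module map $B \to C$ \emph{is} the precomposition map $i^*$ of the represented morphism $i$, surjective with kernel the representable $A$), and the representability claim, which the paper simply cites from the reference on topological triangulated categories, is proved there by the cone construction you sketch — lift $\id_c$ through the levelwise surjection $B(c) \to C(c)$, note that its differential is a cycle in ${\Map{\cat{C}}(a,c)}_{-1}$, and represent $B$ by the cone of the corresponding $0$-cycle after an appropriate translation. The one wobble is your intermediate claim that the precomposition map ``is the inclusion $A \hookrightarrow B$'': it is not, since $X \mapsto \Map{\cat{C}}(X,-)$ is contravariant the represented morphism goes $c \to b$ and its precomposition map is precisely the surjection $B \to C$ — but you resolve this correctly before concluding.
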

\begin{proof}
  Representability of $B$ is part of the proof of~\cite[Prop.
  3.2]{topological-triangulated}. Since $B \to C$ is surjective
  and its kernel $A$ is representable, the map
  represented by $B \to C$ is a cofibration.
\end{proof}

\begin{lem}
  \label{lem:acyclic-cofibration-retraction}
  Let $\cat{C}$ be a pretriangulated dg-category and $i : A \to B$ an
  acyclic cofibration. Then there exists a retraction $p : B \to A$ of
  $i$ and a homotopy $h : i \circ p \simeq \id$ with $h \circ i = 0$.
\end{lem}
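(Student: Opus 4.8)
The plan is to run the whole argument inside the mapping chain complexes, extracting from the hypothesis ``$i$ is an acyclic cofibration'' two facts about the precomposition map $i^\ast$. Since $i$ is a cofibration, $i^\ast : \Map{\cat{C}}(B,-) \to \Map{\cat{C}}(A,-)$ is a degreewise surjection of $\cat{C}$-modules whose kernel is representable; writing $C$ for a representing object, this yields for every object $X$ a short exact sequence of chain complexes
\[ 0 \longrightarrow \Map{\cat{C}}(C,X) \longrightarrow \Map{\cat{C}}(B,X) \overset{i^\ast}{\longrightarrow} \Map{\cat{C}}(A,X) \longrightarrow 0 . \]
Since $i$ is moreover a weak equivalence, it is invertible in $H(\cat{C})$, and feeding the $0$-cycle and $1$-chains that witness this inverse into $i^\ast$ shows that $i^\ast$ is a chain homotopy equivalence for every $X$, in particular a quasi-isomorphism. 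A surjective quasi-isomorphism has acyclic kernel, so $\Map{\cat{C}}(C,X)$ is acyclic for every $X$. Everything after this reduces to two short diagram chases in the displayed sequences for $X = A$ and $X = B$.

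For the retraction I would use surjectivity of $i^\ast$ at $X = A$ to choose $\tilde p \in \Map{\cat{C}}(B,A)_0$ with $\tilde p \circ i = \id_A$. Because $i$ is a cycle, $i^\ast$ is a chain map, so $i^\ast(d\tilde p) = d\id_A = 0$; thus $d\tilde p$ is a cycle of the acyclic complex $\Map{\cat{C}}(C,A)$ and hence equals $d\kappa$ for some $\kappa \in \Map{\cat{C}}(C,A)_0$. Then $p := \tilde p - \kappa$ is a $0$-cycle, and since $\kappa \circ i = 0$ we get $p \circ i = \id_A$, so $p$ is the required retraction in $\cat{C}_0$.

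For the homotopy I would observe that $i \circ p - \id_B$ is a $0$-cycle (as $i$ and $p$ are cycles) and that $(i \circ p - \id_B) \circ i = i \circ \id_A - i = 0$, so it lies in $\Map{\cat{C}}(C,B)$. Acyclicity of that complex then provides $h \in \Map{\cat{C}}(C,B)_1 \subseteq \Map{\cat{C}}(B,B)_1$ with $dh = i \circ p - \id_B$, and $h \circ i = 0$ holds because $h$ factors through the kernel of $i^\ast$. Hence $h$ is a homotopy $i \circ p \simeq \id_B$ with $h \circ i = 0$.

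The only step that needs genuine care is the first paragraph: unwinding the definitions to see that an acyclic cofibration makes $i^\ast$ into a surjective quasi-isomorphism with representable kernel $\Map{\cat{C}}(C,-)$, and hence that this kernel is acyclic. Once that is in place, $p$ and $h$ are essentially forced, with no sign bookkeeping beyond the identity $d(f \circ i) = df \circ i$ valid whenever $i$ is a cycle.
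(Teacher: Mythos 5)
Your proof is correct, and its second half (producing $h$) is exactly the paper's argument: $(i\circ p-\id_B)\circ i=0$, so $i\circ p-\id_B$ is a $0$-cycle of the kernel of $i^*$, which is acyclic, and any $1$-chain bounding it automatically satisfies $h\circ i=0$ because it lives in that kernel. Where you diverge is the retraction: the paper simply cites~\cite[Prop 3.2]{topological-triangulated} (``every object is fibrant'') for the existence of $p$, whereas you construct it by hand --- lift $\id_A$ through the surjection $i^*:\Map{\cat{C}}(B,A)\to\Map{\cat{C}}(A,A)$ to some $\tilde p$, note $d\tilde p$ is a cycle of the acyclic kernel complex, and correct $\tilde p$ by a $\kappa$ killing the differential without disturbing $\tilde p\circ i$. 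This makes your write-up self-contained at the cost of carrying the observation that $i^*$ is a surjective quasi-isomorphism (hence has acyclic kernel) explicitly; that observation is sound, since the homotopy-inverse data for $i$ in $\cat{C}$ transports along $i^*$ and $g^*$ to exhibit $i^*$ as a chain homotopy equivalence on every mapping complex. Both routes are valid; yours trades a citation for a short, checkable diagram chase.
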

\begin{proof}
  The retraction exists by~\cite[Prop 3.2]{topological-triangulated}
  since every object is fibrant. Then 
  \[ (i \circ p - \id) \circ i = i \circ p \circ i - i = 0, \]
  so $(i \circ p - \id)$ is a cycle in the kernel of the precomposition map $i^*$. Since $i$ is a cofibration, $\ker(i^*)$ is contractible and thus
  there exists a $h \in {\ker(i^*)}_1$ such that
  \[ dh = i \circ p - \id. \qedhere\]
\end{proof}

A dg-category can be transformed into a quasicategory by first considering the
mapping complexes as simplicial sets via the Dold-Kan correspondence to obtain
a simplicially enriched category and then taking the homotopy coherent nerve.
Alternatively, there is a more direct construction:

\begin{definition}[\cite{a-infinity-dg-nerve}]\label{def:dg-nerve}
  Let $\cat{C}$ be a dg-category. The \defn{dg-nerve} $\dgNerve(\cat{C})$ is
  the simplicial set where an $n$-simplex
  consists of a family of objects $X_0, \ldots, X_n$
  together with a family of maps:

  \begin{enumerate}
    \item For each sequence $\seq{i_0, \ldots, i_k}: [k] \to [n]$ with $k > 0$ there is a chosen map
      \[f(\seq{i_0, \ldots, i_k}) \in {\Map{\cat{C}}(X_{i_0}, X_{i_k})}_{k - 1}\]
    \item The maps satisfy a differential coherence condition:
      \begin{align*}
        -df(\seq{i_0, \ldots, i_k}) &=
        \sum_{j = 1}^{k - 1}
        \sign{j}
        f(\seq{i_0, \ldots, \hat{i_j}, \ldots, i_k})  \\
        &+\sum_{j = 1}^{k - 1}
        \sign{(j - 1)k}
        f(\seq{i_j, \ldots, i_k})
        \circ
        f(\seq{i_0, \ldots, i_j}),
      \end{align*}
    \item The maps satisfy strict unitality:
      \begin{alignat*}{3}
        f(\seq{i_0, i_1}) &= \id_{X_{i_0}} &&\qquad \text{(if $i_0 = i_1$)} \\
        f(\seq{i_0, \ldots, i_k}) &= 0 &&\qquad \text{(if $i_j = i_{j + 1}$ for some $j$)}.
      \end{alignat*}
  \end{enumerate}
  An order-preserving map $\alpha : [m] \to [n]$ acts on such an $m$-simplex
  by reindexing the objects and by precomposition of the sequences.
\end{definition}

Simplices of the dg-nerve of a dg-category $\cat{C}$ can be seen as
homotopy-coherent diagrams, where the coherence data is given by higher-degree
morphisms. Concretely, the $0$-simplices of $\dgNerve(\cat{C})$ are just the
objects of $\cat{C}$, the $1$-simplices are the maps of the cycle category of
$\cat{C}$, and a $2$-simplex is given by a triple of objects $X_0, X_1, X_2$ and
$0$-cycles
\begin{align*}
  f(\seq{01}) &\in {\Map{\cat{C}}(X_0, X_1)}_0, &
  f(\seq{12}) &\in {\Map{\cat{C}}(X_1, X_2)}_0, &
  f(\seq{02}) &\in {\Map{\cat{C}}(X_0, X_2)}_0,
\end{align*}
together with an element $f(\seq{012}) \in {\Map{\cat{C}}(X_0, X_2)}_1$
such that \[df(\seq{012}) = f(\seq{02}) - f(\seq{12}) \circ f(\seq{01}).\]

There is a map $\Nerve(\cat{C}_0) \to \dgNerve(\cat{C})$ that sends every
$\sigma : [n] \to \cat{C}_0$ to the $n$-simplex given by the family of objects
$\sigma(0), \ldots, \sigma(n)$ and the family of maps with $f(\seq{i_0, i_1}) =
\sigma(i_0 \to i_1)$ and $f(\seq{i_0, \ldots, i_k}) = 0$ for any $k \neq 1$.

\begin{prop}
  Let $\cat{C}$ be a dg-category, then $\dgNerve(\cat{C})$ is a quasicategory
  and the map $\Nerve(\cat{C}_0) \to \dgNerve(\cat{C})$ descends to an isomorphism
  $H(\cat{C}) \to \Ho(\dgNerve(\cat{C}))$ from the homology category to the homotopy
  category.
\end{prop}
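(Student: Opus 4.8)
The plan is to show first that $\dgNerve(\cat{C})$ satisfies the inner horn filling condition, and then to analyse the homotopy category directly via the explicit combinatorial description of simplices.

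For the lifting property, suppose we are given a map $\Lambda^n_k \to \dgNerve(\cat{C})$ with $0 < k < n$; this amounts to a family of objects $X_0, \ldots, X_n$ together with maps $f(\seq{i_0, \ldots, i_j})$ for every sequence whose image misses some element of $[n] \setminus \{k\}$, subject to the differential coherence condition, unitality, and compatibility with the face maps. To extend to an $n$-simplex we must supply the missing data, namely $f$ on sequences that contain every vertex except possibly $k$ — concretely $f(\seq{0, \ldots, \widehat{k}, \ldots, n})$ and $f(\seq{0, \ldots, n})$. First I would set $f(\seq{0, \ldots, n})$ arbitrarily, say to $0$ (any choice works because the only constraint mentioning the top sequence is the differential equation that defines $df$ of it). I would then define $f(\seq{0, \ldots, \widehat{k}, \ldots, n})$ so that the coherence equation for $\seq{0, \ldots, n}$ holds: solving that equation for the term $\sign{k} f(\seq{0, \ldots, \widehat{k}, \ldots, n})$ expresses it as a signed sum of $df(\seq{0,\ldots,n})$, the other faces, and the composites $f(\seq{i_j, \ldots, n}) \circ f(\seq{0, \ldots, i_j})$, all of which are already determined by the horn. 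The real content is then to check that this single choice is \emph{consistent}: every sequence supported on $[n]\setminus\{k\}$ of length $n$ or $n+1$ must satisfy its own coherence relation. For the sequence $\seq{0,\ldots,\widehat{k},\ldots,n}$ this is a purely algebraic identity obtained by applying $d$ to the defining equation and using $d^2 = 0$ together with the coherence relations already satisfied on the boundary of the horn; this is the Leibniz-rule bookkeeping that underlies the fact that the dg-nerve is a quasicategory, and I expect it to be the main obstacle — tracking the signs so that everything cancels. For $n = 2$ it is immediate, and for higher $n$ it is the standard $A_\infty$-type computation that appears in the reference \cite{a-infinity-dg-nerve}; I would cite that rather than reproduce it in full.

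For the second statement, recall that $\Ho(\dgNerve(\cat{C}))$ has the $0$-simplices, i.e.\ the objects of $\cat{C}$, as its objects, and morphisms are $1$-simplices modulo the homotopy relation generated by $2$-simplices. A $1$-simplex is precisely a $0$-cycle $f \in \Map{\cat{C}}(X_0, X_1)_0$, so the map $\Nerve(\cat{C}_0) \to \dgNerve(\cat{C})$ induces on objects and on morphism-generators the obvious bijection with $0$-cycles, i.e.\ with $\Hom{\cat{C}_0}$. It remains to identify the equivalence relation. Two $0$-cycles $f, g : X_0 \to X_1$ become equal in $\Ho(\dgNerve(\cat{C}))$ iff they are connected by a chain of $2$-simplices witnessing homotopy; unwinding the $2$-simplex data shows that a single $2$-simplex with boundary $(g, \id_{X_1}, f)$ — objects $X_0, X_1, X_1$, faces $f(\seq{01}) = f$, $f(\seq{12}) = \id$, $f(\seq{02}) = g$ — is exactly an element $f(\seq{012}) \in \Map{\cat{C}}(X_0,X_1)_1$ with $df(\seq{012}) = g - f$, i.e.\ a chain homotopy $f \simeq g$. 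Hence $f \sim g$ in the homotopy category iff $f - g$ is a boundary, which is precisely the relation defining $H_0(\Map{\cat{C}}(X_0, X_1)) = \Hom{H(\cat{C})}(X_0, X_1)$; one checks this is the full homotopy relation (it is already an equivalence relation because the left-homotopy and right-homotopy relations in a quasicategory coincide and are transitive, or directly because chain homotopy is an equivalence relation on cycles). Finally, compatibility with composition: given $2$-simplices exhibiting the composite, the formula $df(\seq{012}) = f(\seq{02}) - f(\seq{12})\circ f(\seq{01})$ shows that the class of $f(\seq{02})$ in $H_0$ equals the product of the classes of $f(\seq{12})$ and $f(\seq{01})$, so composition in $\Ho(\dgNerve(\cat{C}))$ matches composition of homology classes. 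Therefore $\Nerve(\cat{C}_0) \to \dgNerve(\cat{C})$ descends to a functor that is bijective on objects and on hom-sets, i.e.\ an isomorphism $H(\cat{C}) \xrightarrow{\ \sim\ } \Ho(\dgNerve(\cat{C}))$.

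I expect the combinatorial horn-filling computation — in particular verifying the sign cancellations in the coherence relation for the newly added top-dimensional sequences — to be the technical crux; the homotopy-category identification is routine once the $2$-simplex data is unwound, since it reduces to the definition of $H_0$ of a chain complex.
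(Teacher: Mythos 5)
Your proposal is correct in substance, but it takes a genuinely different route from the paper: the paper proves this proposition purely by citation (to Proposition 2.2.12 of the $\Ainfty$-nerve paper for the quasicategory property and to Remark 1.3.1.11 of \emph{Higher Algebra} for the identification of the homotopy category), whereas you sketch a direct argument. Your direct argument is the standard one and is essentially complete: for an inner horn $\Lambda^n_k$ the only undetermined injective sequences are $\seq{0,\ldots,n}$ and $\seq{0,\ldots,\hat{k},\ldots,n}$; you may set the first to $0$ and solve the top coherence relation for the second (possible precisely because $0<k<n$ puts the $k$-th face in the inner-face sum); the remaining check is the coherence relation for $\seq{0,\ldots,\hat{k},\ldots,n}$ itself. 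The cleanest way to organize the sign bookkeeping you flag as the crux is via Lemma~\ref{lem:nerve-maurer-cartan}: setting $g = df + f*f$ for the partially defined extension, one computes $dg = g*f - f*g$, and evaluating this identity on $\seq{0,\ldots,n}$ shows that $g$ vanishes on the missing face because it already vanishes on all horn data and on $\seq{0,\ldots,n}$ by construction. Your identification of $\Ho(\dgNerve(\cat{C}))$ with $H(\cat{C})$ is also correct: a $2$-simplex with degenerate edge $f(\seq{12})=\id$ is exactly an element $h$ with $dh = g - f$, so the homotopy relation on $1$-simplices is "differ by a boundary," and the coherence relation $df(\seq{012}) = f(\seq{02}) - f(\seq{12})\circ f(\seq{01})$ matches composition with composition in $H_0$. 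What your approach buys is a self-contained proof that makes visible exactly where strict unitality and the Maurer--Cartan equation enter; what the paper's citation buys is brevity and consistency with the sign conventions of the references, which is the only point at which a from-scratch verification carries real risk.
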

\begin{proof}
  By~\cite[Proposition 2.2.12]{a-infinity-dg-nerve} and~\cite[Remark 1.3.1.11]{higher-algebra}.
\end{proof}

\begin{remark}\label{remark:a-infinity-functor}
  Consider the poset $[n]$ as a dg-category with mapping complexes either
  $\Z\langle 0 \rangle$ or $0$, then the dg-nerve of a dg-category $\cat{C}$ as
  defined above is precisely the simplicial set of strictly unital
  $\Ainfty$-functors $[n] \to \cat{C}$, with the simplicial maps given by
  precomposition~\cite{a-infinity-dg-nerve}. 
  By strict unitality, a simplex of the dg-nerve is already determined by the
  maps associated to injective sequences, but including the non-injective sequences
  in the definition makes some constructions (in particular that of the
  simplicial action) more convenient. This definition is still equivalent to
  that in~\cite{higher-algebra}, which differs only in the omission of the maps
  forced by strict unitality and in the choice of signs.
\end{remark}

We conclude the exposition by a technically useful alternative view on
simplices of the dg-nerve that simplifies the calculation in the main part of
the thesis. 

\begin{definition}
  For $n \geq 0$, define $\PathAlg(n)$ to be the dg-coalgebra that as a graded
  abelian group is given in degree $k > 0$ by the freely generated abelian
  group
  \[ {\PathAlg(n)}_k = \FreeAb{\{ \alpha : [k] \to [n] \}} \]
  and which vanishes in degrees $k \leq 0$.
  The differential of $\PathAlg(n)$ sends a homogeneous element $\alpha : [k] \to [n]$ to the
  alternating sum of its inner faces:
  \[ 
    d\seq{\alpha_0, \ldots, \alpha_k} = 
    \sum_{j = 1}^{k - 1} \sign{j}
    \seq{\alpha_0, \ldots, \hat{\alpha_j}, \ldots, \alpha_k}.
  \]
  The comultiplication
  $\Delta : \PathAlg(n) \to \PathAlg(n) \otimes \PathAlg(n)$
  splits sequences:
  \[
    \Delta \seq{\alpha_0, \ldots, \alpha_k} =
    \sum_{j = 1}^{k - 1}
    \sign{j(k - j)}
    \seq{\alpha_j, \ldots, \alpha_k}
    \otimes
    \seq{\alpha_0, \ldots, \alpha_j}.
  \]
\end{definition}

A dg-category $\cat{C}$ induces a dg-algebra
\[
\dgCatAlg{\cat{C}} := \bigoplus_{X, Y} \Map{\cat{C}}(X, Y)
\]
with the multiplication $m$ that composes compatible maps and is $0$ otherwise. 
The chain complex of maps $\PathAlg(n) \to \dgCatAlg{\cat{C}}$ then is a dg-algebra with
the convolution product:
\[ p * q := m \circ (p \otimes q) \circ \Delta. \]
This leads to the following characterisation of simplices of the dg-nerve:

\begin{lem}\label{lem:nerve-maurer-cartan}
  Let $\cat{C}$ be a dg-category. An $n$-simplex of $\dgNerve(\cat{C})$
  corresponds to a family of objects $X_0, \ldots, X_n$ and a graded map $f :
  \PathAlg(n) \to \dgCatAlg{\cat{C}}$ of degree $-1$ satisfying the following conditions:
  \begin{enumerate}
    \item Sequences are sent to the mapping spaces between the correct objects: 
      \[f(\seq{i_0, \ldots, i_k}) \in {\Map{\cat{C}}(X_{i_0}, X_{i_k})}_{k - 1}\]
    \item $f$ satisfies the Maurer-Cartan condition in the convolution algebra:
      \[df + f * f = 0.\]
    \item $f$ satisfies strict unitality:
      \begin{alignat*}{3}
        f(\seq{i_0, i_1}) &= \id_{X_{i_0}} &&\qquad \text{(if $i_0 = i_1$)} \\
        f(\seq{i_0, \ldots, i_k}) &= 0 &&\qquad \text{(if $i_j = i_{j + 1}$ for some $j$)}.
      \end{alignat*}
  \end{enumerate}
\end{lem}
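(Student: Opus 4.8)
The plan is to unwind the definitions on both sides and check that the convolution-algebra formulation of the Maurer–Cartan equation reproduces exactly the differential coherence condition from Definition~\ref{def:dg-nerve}. The correspondence on objects and on the underlying families of maps is tautological: a degree $-1$ graded map $f : \PathAlg(n) \to \dgCatAlg{\cat{C}}$ assigns to each generator $\seq{i_0,\ldots,i_k}$ (which sits in degree $k$ of $\PathAlg(n)$) an element of $\dgCatAlg{\cat{C}}$ in degree $k-1$, and condition~(1) is precisely the requirement that this element lie in the summand $\Map{\cat{C}}(X_{i_0},X_{i_k})$. So the whole content of the lemma is the equivalence of the two coherence conditions, with strict unitality (condition~(3)) being literally identical on both sides.

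The key computation is therefore to evaluate $df + f*f$ on a generator $\seq{i_0,\ldots,i_k}$ and match it term by term with the formula in Definition~\ref{def:dg-nerve}. I would recall that for a graded map $f$ of degree $-1$ into the convolution algebra, the differential is $df = d_{\dgCatAlg{\cat{C}}} \circ f - \sign{|f|} f \circ d_{\PathAlg(n)} = d \circ f + f \circ d_{\PathAlg(n)}$ using $|f| = -1$; applied to $\seq{i_0,\ldots,i_k}$ this gives $df(\seq{i_0,\ldots,i_k}) = d\bigl(f(\seq{i_0,\ldots,i_k})\bigr) + \sum_{j=1}^{k-1}\sign{j} f(\seq{i_0,\ldots,\hat{i_j},\ldots,i_k})$, where the second sum comes from the inner-face differential of $\PathAlg(n)$. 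Next, $(f*f)(\seq{i_0,\ldots,i_k}) = m \circ (f\otimes f) \circ \Delta(\seq{i_0,\ldots,i_k})$; expanding $\Delta$ and being careful with the Koszul sign rule for $f\otimes f$ applied to the splitting $\seq{i_j,\ldots,i_k}\otimes\seq{i_0,\ldots,i_j}$ (the first factor has degree $k-j$ in $\PathAlg(n)$, so the sign picked up is $\sign{(k-j)|f|} = \sign{(k-j)} = \sign{j-k} = \sign{j+k}$ up to parity, combining with the comultiplication sign $\sign{j(k-j)}$), and using that $m$ on $\dgCatAlg{\cat{C}}$ is composition when the objects match, yields $\sum_{j=1}^{k-1}\sign{(j-1)k} f(\seq{i_j,\ldots,i_k})\circ f(\seq{i_0,\ldots,i_j})$ after simplifying the sign. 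Setting $df + f*f = 0$ and solving for $d f(\seq{i_0,\ldots,i_k})$ then reproduces condition~(2) of Definition~\ref{def:dg-nerve} exactly, including the sign $-df(\seq{i_0,\ldots,i_k})$ on the left.

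The main obstacle is purely bookkeeping: getting the signs to agree. There are three sources of signs — the comultiplication sign $\sign{j(k-j)}$ in $\Delta$, the Koszul sign from $(f\otimes f)$ with $|f|=-1$, and the sign $\sign{|f|}=-1$ in the formula for $d$ of a graded map of degree $-1$ — and they must combine to give exactly $\sign{(j-1)k}$ in front of the composition term and to flip the overall sign of the differential term. I would verify this by a direct parity calculation modulo $2$: $j(k-j) + (k-j)(-1) \equiv j(k-j) + (k-j) = (k-j)(j+1) \pmod 2$, and one checks $(k-j)(j+1) \equiv (j-1)k \pmod 2$ for all relevant $j, k$, which handles the composition sign; the term $f \circ d_{\PathAlg(n)}$ already carries $\sign{j}$ matching the first sum in Definition~\ref{def:dg-nerve} after moving $df(\seq{i_0,\ldots,i_k})$ to the other side. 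Once the signs check out, the equivalence of the data and of conditions~(1)–(3) gives the claimed bijective correspondence, and naturality in the simplicial direction is immediate since precomposition with $\alpha : [m]\to[n]$ on simplices corresponds to precomposition with the induced coalgebra map $\PathAlg(m)\to\PathAlg(n)$.
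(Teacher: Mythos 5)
Your proposal is correct and follows essentially the same route as the paper's proof: unwind $df = d\circ f + f\circ d_{\PathAlg(n)}$ (using $|f|=-1$) and $(f*f)(\alpha) = m\circ(f\otimes f)\circ\Delta(\alpha)$ on a generator and match the result term by term with the differential coherence condition of Definition~\ref{def:dg-nerve}. Your sign bookkeeping, including the parity check $(k-j)(j+1)\equiv(j-1)k \pmod 2$, is accurate and in fact more explicit than the paper's rather terse verification.
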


\begin{proof}
  Conditions 1 and 3 correspond directly to those in Definition~\ref{def:dg-nerve}.
  The Maurer-Cartan condition is equivalent to the differential coherence condition:
  Let $\alpha : [k] \to [n]$ be an element of $\PathAlg(n)$, then we have:
  \begin{align*}
    f(d\alpha) &= 
    \sum_{j = 1}^{k - 1}
    \sign{j}
    f(\seq{\alpha_0, \ldots, \hat{\alpha_j}, \ldots, \alpha_k})\\
    (f * f)(\alpha) &=
    \sum_{j = 1}^{k - 1}
    \sign{(j - 1)k}
    f(\seq{\alpha_j, \ldots, \alpha_k})
    \circ
    f(\seq{\alpha_0, \ldots, \alpha_j}).\qedhere
  \end{align*}
\end{proof}

\begin{remark}
  Graded maps of degree $-1$ from a coalgebra into an algebra satisfying the Maurer-Cartan
  condition are also called twisted cochains. This fits into a more general viewpoint:
  When we consider an $n$-simplex of $\dgNerve(\cat{C})$ as an an $\Ainfty$-functor
  $[n] \to \cat{C}$ like in Remark~\ref{remark:a-infinity-functor}, this
  induces a map $\dgCatAlg{[n]} \to \dgCatAlg{\cat{C}}$ of $\Ainfty$-algebras,
  which corresponds in the terminology of~\cite{keller-a-infinity} to a twisting cochain from the
  bar construction $B\dgCatAlg{[n]}$ to $\dgCatAlg{\cat{C}}$. We can unroll the
  definitions to see that $B\dgCatAlg{[n]}$ is $\PathAlg(n)$.
\end{remark}

\section{Constructing Resolutions}\label{sec:construction}

The dg-nerve $\dgNerve(\cat{C})$ of a dg-category $\cat{C}$ makes use of the
higher-degree functions in the mapping complexes of $\cat{C}$, whereas the
transition from $\cat{C}$ to its underlying ordinary category $\cat{C}_0$
forgets all but the 0-cycles, with only some hints about higher dimensions
encoded in weak equivalences and cofibrations. When $\cat{C}$ is
pretriangulated, however, all of the homotopical structure of $\cat{C}$ can be
recovered, which we show by an explicit construction that coherently encodes an
$n$-simplex of the dg-nerve into a homotopical Reedy cofibrant diagram $D[n]
\to \cat{C}_0$.

The core idea is to encode the maps of an simplex in the dg-nerve as the
differential of a complex, and then filter this complex to obtain a resolution
by inclusion maps. The most prominent example of such a construction is the
mapping cylinder of a chain map. To generalize from a cylinder to a shape that
can accomodate higher coherence structure, we define:

\begin{definition}
  For an object $\alpha \in D[n]$ define $\PathMod(\alpha)$ to be the
  $\PathAlg(n)$-comodule that in
  degree $k \geq 0$ is the free abelian group
  \[ {\PathMod(\alpha)}_k = \FreeAb{\{ (i : \beta \hookrightarrow \alpha) \in (D[n] \downarrow \alpha)
    \mid \beta : [k] \to [n] \}} \]
  together with the differential 
  \[ 
    d^*\seq{i_0, \ldots, i_k} = 
    \sum_{j = 1}^{k} \sign{j}
    \seq{i_0, \ldots, \hat{i_j}, \ldots, i_k},
  \]
  and the comultiplication
  $\delta : \PathMod(\alpha) \to \PathMod(\alpha) \otimes \PathAlg(n)$
  \[
    \delta \seq{i_0, \ldots, i_k} =
    \sum_{j = 1}^{k}
    \sign{j(k - j)}
    \seq{i_j, \ldots, i_k}
    \otimes
    (\alpha \circ \seq{i_0, \ldots, i_j}).
  \]
  By postcomposition this definition extends to a functor from
  $D[n]$ to $\PathAlg(n)$-comodules.
\end{definition}

Consider a dg-category $\cat{C}$ as a dg-algebra with multiplication $m$ that
composes compatible maps and let $\alpha \in D[n]$. The chain complex of
graded maps $\varphi: \PathMod(\alpha) \to \dgCatAlg{\cat{C}}$ becomes a dg-module over
the convolution algebra of maps $f : \PathAlg(n) \to \dgCatAlg{\cat{C}}$ via the multiplication
\[ \varphi * f := m \circ (\varphi \otimes f) \circ \delta. \]

In the category of chain complexes, we can directly construct an object with a
specified differential that generalizes that of the mapping cylinder by
applying the higher coherence maps. In an arbirary pretriangulated dg-category
$\cat{C}$, we have to proceed indirectly. We specify our main construction as a
diagram of $\cat{C}$-modules, which we show to be representable shortly after:

\begin{definition}
  Let $\cat{C}$ be a dg-category, $(X, f)$ an $n$-simplex of
  $\dgNerve(\cat{C})$ and $\alpha \in D[n]$. Then
  let $C(X, f)(\alpha)$ denote the $\cat{C}$-module defined by the graded abelian
  group
  \[ 
    C(X, f)(\alpha) =
    \prod_{i \in D[n] \downarrow \alpha} \Map{\cat{C}}(X_{(\alpha \circ i)(0)}, -) \subseteq
    \Map{\Ch}(\PathMod(\alpha), \oplus_{X \in \cat{C}} \Map{\cat{C}}(X, -))
  \]
  together with the differential that sends homogeneous elements $\varphi \in C(X, f)(\alpha)$ to
  \begin{equation}\label{eqn:c-differential}
    d_f \varphi := d\varphi - \sign{|\varphi|} \varphi * f,
  \end{equation}
  where $d$ is the differential of the underlying mapping space.
  By functoriality of $\PathMod$, this definition extends to a functor
  from $\op{D[n]}$ to $\cat{C}$-modules.
\end{definition}

Concretely, for any object $Y$ a homogeneous element $\varphi \in C(X, f)(\alpha)(Y)$ of degree $|\varphi|$
assigns to every $i \in D[n] \downarrow \alpha$ of the form
\[
  \begin{tikzcd}
    \lbrack b \rbrack \ar[hookrightarrow]{rr}{i} \ar[swap]{dr}{\beta} & & \lbrack a \rbrack \ar{dl}{\alpha} \\
    & \lbrack n \rbrack
  \end{tikzcd}
\]
a homogeneous map $X_{(\alpha \circ i)(0)} \to Y$ of degree $b + |\varphi|$.
We can expand the differential (\ref{eqn:c-differential})
on an element $\seq{i_0, \ldots, i_k} \in \PathMod(\alpha)$ as:
\begin{align*}
  d_f \varphi(\seq{i_0, \ldots, i_k}) &=
  \sum_{j = 1}^k \sign{j} \seq{i_0, \ldots, \hat{i_j}, \ldots, i_k} \\
  &- \sum_{j = 1}^k \sign{|\varphi| + k(j - 1)}
  \varphi(\seq{i_j, \ldots, i_k}) \circ f(\seq{\alpha(i_0), \ldots, \alpha(i_j)}) \\
  &- \sign{|\varphi|} d(\varphi(\seq{i_0, \ldots, i_k})).
\end{align*}

In Lemma\ \ref{lem:nerve-maurer-cartan} we have seen that the maps $f :
\PathAlg(n) \to \Map{\cat{C}}(X, X)$ arising from an $n$-simplex of the dg-nerve
$\dgNerve(\cat{C})$ satisfy the Maurer-Cartan condition $df + f * f = 0$ in the
convolution algebra. Here we can use this fact to show:

\begin{lem}
  Let $\cat{C}$ be a dg-category, $(X, f)$ an $n$-simplex of
  $\dgNerve(\cat{C})$ and $\alpha \in D[n]$. Then $d_f^2 = 0$,
  so $C(X, f)(\alpha)$ is a well-defined $\cat{C}$-module.
\end{lem}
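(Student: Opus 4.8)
The plan is to verify $d_f^2 = 0$ by a direct computation, exploiting that the differential $d_f\varphi = d\varphi - \sign{|\varphi|}\varphi * f$ splits into two pieces whose cross-terms must cancel against the Maurer-Cartan relation for $f$. First I would record the relevant algebraic facts: the chain complex of graded maps $\PathMod(\alpha) \to \dgCatAlg{\cat{C}}$ is a dg-module over the convolution algebra of maps $\PathAlg(n) \to \dgCatAlg{\cat{C}}$, so by the standard sign conventions one has the Leibniz rule $d(\varphi * f) = d\varphi * f + \sign{|\varphi|}\varphi * df$, and associativity $(\varphi * f) * f = \varphi * (f * f)$. These are exactly the module-algebra compatibilities that hold because $\delta$ is coassociative with respect to $\Delta$ (which is why $\PathMod(\alpha)$ was defined as a $\PathAlg(n)$-comodule in the first place), so I would either cite this structure directly or note it follows from unwinding the definitions of $*$ on both sides.

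Next I would expand $d_f^2\varphi$ for a homogeneous $\varphi$ of degree $|\varphi|$. Applying $d_f$ once gives a term of degree $|\varphi|+1$, namely $d\varphi - \sign{|\varphi|}\varphi*f$; applying $d_f$ again yields
\[
  d_f^2\varphi = d(d\varphi - \sign{|\varphi|}\varphi*f) - \sign{|\varphi|+1}(d\varphi - \sign{|\varphi|}\varphi*f)*f.
\]
Since $d^2 = 0$ on the underlying mapping complex, the first bracket contributes only $-\sign{|\varphi|}d(\varphi*f)$. Expanding the second bracket gives $\sign{|\varphi|+1}d\varphi * f$ plus $\sign{2|\varphi|+1}\varphi*f*f = -\varphi*(f*f)$. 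Using the Leibniz rule to rewrite $d(\varphi*f) = d\varphi*f + \sign{|\varphi|}\varphi*df$, the two $d\varphi*f$ terms cancel (they carry opposite signs $-\sign{|\varphi|}\cdot\sign{|\varphi|} = -1$ versus $\sign{|\varphi|+1}\cdot\sign{|\varphi|}$... I would track this carefully) and one is left with
\[
  d_f^2\varphi = -\varphi*df - \varphi*(f*f) = -\varphi*(df + f*f) = 0
\]
by the Maurer-Cartan condition $df + f*f = 0$ from Lemma~\ref{lem:nerve-maurer-cartan}.

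The main obstacle is bookkeeping the signs: there are several places where $\sign{|\varphi|}$ factors interact with the Koszul signs built into $*$, into the differential on $\Map{\Ch}$ (which already carries a $\sign{n}$ in $df = d_Y\circ f - \sign{n} f\circ d_X$), and into the comultiplication $\delta$. Rather than manipulate the abstract convolution identities, it may be cleaner to verify the cancellation term-by-term on a basis element $\seq{i_0,\ldots,i_k}$ using the explicit expansion of $d_f\varphi(\seq{i_0,\ldots,i_k})$ already displayed before the lemma: the "inner face" terms reproduce $(d^*)^2 = 0$ in $\PathMod(\alpha)$, the terms quadratic in $f$ reorganize (via the splitting of sequences at two points $i_j, i_{j'}$) into the differential-coherence sum for $f$, and the mixed $d\circ f$ terms pair off. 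I would choose whichever presentation keeps the signs most transparent, and state the module-algebra Leibniz and associativity identities as a preliminary observation so the computation itself is short. Finally, well-definedness of $C(X,f)(\alpha)$ as a $\cat{C}$-module follows since the differential is $\cat{C}$-linear (composition with maps in $\cat{C}$ commutes with $d$ and with $*f$) and now squares to zero, and functoriality in $\alpha$ is inherited from that of $\PathMod$.
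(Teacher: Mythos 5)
Your proposal is correct and follows essentially the same route as the paper: expand $d_f^2\varphi$, invoke $d^2=0$, the Leibniz rule $d(\varphi*f)=d\varphi*f+\sign{|\varphi|}\varphi*df$, associativity $(\varphi*f)*f=\varphi*(f*f)$, and conclude $d_f^2\varphi=-\varphi*(df+f*f)=0$ by Maurer--Cartan. The one sign you flag does work out (the term from the second bracket is $-\sign{|\varphi|+1}d\varphi*f=\sign{|\varphi|}d\varphi*f$, which cancels the $-\sign{|\varphi|}d\varphi*f$ coming from the Leibniz expansion), and your explicit remark that the Leibniz and associativity identities rest on the comodule/coalgebra compatibility of $\delta$ and $\Delta$ is a point the paper leaves implicit.
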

\begin{proof}
  Let $\varphi \in C(X, f)(\alpha)$ be homogeneous. Then we calculate:
  \begin{align*}
    d_f^2 \varphi &= d_f (d\varphi - \sign{|\varphi|} \varphi * f) \\
    &= d^2 \varphi - \sign{|\varphi|} d(\varphi * f)
    + \sign{|\varphi|} d\varphi * f - (\varphi * f) * f \\
    &= -\sign{|\varphi|} d\varphi * f - \varphi * df 
    + \sign{|\varphi|} d\varphi * f - \varphi * (f * f) \\
    &= - \varphi * (df + f * f).
  \end{align*}
  By Lemma\ \ref{lem:nerve-maurer-cartan} we have that $df + f * f = 0$,
  so this last term vanishes.
\end{proof}

Because the generating set of $\PathMod(\alpha)$ is finite, we can use an
inductive argument to construct representations of $C(X, f)$ in any
pretriangulated dg-category:

\begin{prop}\label{prop:b-representable-reedy}
  Let $\cat{C}$ be a pretriangulated dg-category, $(X, f)$ an $n$-simplex
  of $\dgNerve(\cat{C})$ and $\alpha \in D[n]$. Then the $\cat{C}$-module
  $C(X, f)(\alpha)$ is representable by some object $B(X, f)(\alpha)$. By
  varying $\alpha$, this assembles into a Reedy cofibrant functor 
  \[ B(X, f) : D[n] \longrightarrow \cat{C}. \]
\end{prop}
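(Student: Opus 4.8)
The plan is to build the representing objects $B(X, f)(\alpha)$ by induction on the degree of $\alpha$ in the direct category $D[n]$, simultaneously establishing representability and the Reedy cofibrancy condition. Fix $\alpha : [a] \to [n]$ in $D[n]$. The comodule $\PathMod(\alpha)$ is built from the generators indexed by $i : \beta \hookrightarrow \alpha$ in $D[n] \downarrow \alpha$, and these are stratified by $\deg \beta$. In particular, the generator $\id_\alpha$ sits on top, and removing it leaves the sub-dg-module $\PathMod^\partial(\alpha)$ spanned by the generators coming from the latching category $\partial(D[n] \downarrow \alpha)$ — that is, $\PathMod^\partial(\alpha) = \colim_{\beta \in \partial(D[n]\downarrow\alpha)} \PathMod(\beta)$, compatibly with the comodule structure and the twisted differential $d_f$. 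This gives a short exact sequence of chain complexes (hence of $\cat{C}$-modules after applying $C(X,f)(-)$), and dually a short exact sequence
\[
  \begin{tikzcd}
    0 \ar{r} & C(X,f)(\alpha) \ar{r} & {?} \ar{r} & {?} \ar{r} & 0
  \end{tikzcd}
\]
— more precisely, the quotient $\PathMod(\alpha)/\PathMod^\partial(\alpha)$ is a shifted copy of $\Map{\cat{C}}(X_{\alpha(0)}, -)$'s representing data: it is freely generated by the single class $[\id_\alpha]$ in degree $a$, with $d_f$ sending it into $\PathMod^\partial(\alpha)$ via the inner faces and the convolution with $f$. So $C(X,f)(\alpha)$ is the kernel of a surjection of $\cat{C}$-modules from something built as an extension of $C(X,f) \restriction \partial(D[n]\downarrow\alpha)$ by the representable $\Map{\cat{C}}(X_{\alpha(0)}[a], -)$.

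Concretely: by the inductive hypothesis, $L_\alpha B(X,f) := \colim_{\beta \in \partial(D[n]\downarrow\alpha)} B(X,f)(\beta)$ is represented — here one needs that these colimits exist, which follows because $C(X,f)$ restricted to the latching category is, after unwinding, itself obtained by finitely many extensions of representables, and Lemma~\ref{lem:dg-ses-representable} lets us assemble them; alternatively one observes $C(X,f)(-)$ turns the colimit defining $\PathMod^\partial(\alpha)$ into a limit which is representable. Then there is a short exact sequence of $\cat{C}$-modules
\[
  \begin{tikzcd}
    0 \ar{r} & C(X,f)(\alpha) \ar{r} & \Map{\cat{C}}(L_\alpha B(X,f), -) \oplus \Map{\cat{C}}(X_{\alpha(0)}[a+1], -) \ar{r} & \cdots
  \end{tikzcd}
\]
obtained by splitting off the top generator; the precise middle term is the mapping-cone-type extension whose representability is exactly the content of Lemma~\ref{lem:dg-ses-representable}, applied to the short exact sequence
\[
  \begin{tikzcd}
    0 \ar{r} & C(X,f)(\alpha) \ar{r} & C(X,f)\restriction\partial \ar{r} & \Map{\cat{C}}(X_{\alpha(0)}[a], -) \ar{r} & 0.
  \end{tikzcd}
\]
Since the outer two terms $\Map{\cat{C}}(X_{\alpha(0)}[a], -)$ and the (inductively representable) $C(X,f)\restriction\partial \cong \Map{\cat{C}}(L_\alpha B(X,f), -)$ are representable, Lemma~\ref{lem:dg-ses-representable} gives that $C(X,f)(\alpha)$ is represented by an object $B(X,f)(\alpha)$, and moreover that the map $L_\alpha B(X,f) \to B(X,f)(\alpha)$ represented by the surjection is a cofibration. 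That surjection is precisely the latching map, so Reedy cofibrancy holds at $\alpha$. Functoriality in $\alpha$ comes from functoriality of $\PathMod$ together with the Yoneda lemma, and one checks the identification of $\colim_{\partial(D[n]\downarrow\alpha)} B(X,f)$ with the latching object is compatible with the transition maps so the induction is coherent.

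The main obstacle is getting the bookkeeping of the short exact sequence right: one must verify that $\PathMod^\partial(\alpha)$ really is the union of the images of $\PathMod(\beta) \to \PathMod(\alpha)$ for $\beta$ in the latching category (as a sub-\emph{comodule} closed under $d^*$), that the quotient is the free comodule on $[\id_\alpha]$ placed in degree $a$, and that applying $C(X,f)(-) = \Map{\Ch}(\PathMod(-), \dots)$ converts this into the claimed short exact sequence of $\cat{C}$-modules with the twisted differential $d_f$ intact — the sign manipulations in $d_f \varphi = d\varphi - \sign{|\varphi|}\varphi * f$ must be tracked carefully to see the maps are $\cat{C}$-module maps. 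Everything else is a routine induction on $\deg\alpha$ invoking Lemma~\ref{lem:dg-ses-representable} at each step, with the base case $\alpha$ a vertex $\seq{i}$ giving $C(X,f)(\seq{i}) = \Map{\cat{C}}(X_i, -)$ represented by $X_i$ and the latching map being the map from the initial object, hence trivially a cofibration.
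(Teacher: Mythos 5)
Your overall strategy — filter $C(X,f)(\alpha)$ by peeling off representable factors and invoke Lemma~\ref{lem:dg-ses-representable} at each step, with the latching map falling out as the surjection onto the restriction to $\partial(D[n]\downarrow\alpha)$ — is the same as the paper's. But there are two problems in the execution. First, your short exact sequence is written in the wrong direction. Applying $\Map{\Ch}(-,M)$ to the inclusion of comodules $\PathMod^\partial(\alpha)\hookrightarrow\PathMod(\alpha)$ is contravariant, so the factor indexed by $\id_\alpha$ is a \emph{sub}module of $C(X,f)(\alpha)$ and the restriction to the latching category is the \emph{quotient}; the correct sequence is
\[
  0 \longrightarrow \Map{\cat{C}}(X_{\alpha(0)},-)[-a] \longrightarrow C(X,f)(\alpha) \longrightarrow C(X,f)\restriction\partial \longrightarrow 0,
\]
with $C(X,f)(\alpha)$ in the middle. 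As you have written it, with $C(X,f)(\alpha)$ as the kernel, Lemma~\ref{lem:dg-ses-representable} does not apply: that lemma concludes representability of the middle term of an extension whose outer terms are representable, not representability of the kernel of a surjection of representables. The fix is just to flip the sequence, after which the lemma gives both representability of $C(X,f)(\alpha)$ and the cofibration property of the represented map $L_\alpha B(X,f)\to B(X,f)(\alpha)$, exactly as you intend.

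Second, your induction on $\deg\alpha$ does not by itself deliver the representability of $C(X,f)\restriction\partial$, i.e.\ the existence of the latching object: knowing each $C(X,f)(\beta)$ is representable for $\beta$ of lower degree does not make their limit over the latching category representable, since representables are not closed under (co)limits. You acknowledge this and say it follows from ``finitely many extensions of representables'' — but making that precise is exactly the finer induction the paper runs: one filters the slice category $D[n]\downarrow\alpha$ by sieves, adding one generator $i:\beta\hookrightarrow\alpha$ at a time, so that each stage $C(X,f)(S\cup\{i\})$ sits in a short exact sequence with kernel $\Map{\cat{C}}(X_{\beta(0)},-)[-k]$ and quotient $C(X,f)(S)$. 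Running this single filtration for fixed $\alpha$ simultaneously handles $C(X,f)(\partial(D[n]\downarrow\alpha))$ (reached partway through, since $\partial$ is a sieve) and $C(X,f)(\alpha)$ itself, and renders the outer induction on $\deg\alpha$ unnecessary. So your ``alternative observation'' is not an alternative but the required argument; once you promote it to the main line and correct the direction of the extension, the proof agrees with the paper's.
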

\begin{proof}
  For a sieve $S \subseteq (D[n] \downarrow \alpha)$, denote by $C(X, f)(S)$ the
  $\cat{C}$-module
  \[ C(X, f)(S) := \prod_{\beta \in S} \Map{\cat{C}}(X_{(\alpha \circ \beta)(0)}, -), \]
  obtained from $C(X, f)$ by restriction of the product to indices in $S$. This
  can equivalently be seen as the quotient of $C(X, f)$ by those components of the
  product not contained in $S$.

  For the empty sieve $\emptyset$ we have $C(X, f)(\emptyset) = 0$, which is
  representable by the zero object of $\cat{C}$. Suppose that $C(X, f)(S)$
  is representable for some sieve $S$ and let
  $i \in (D[n] \downarrow \alpha) \setminus S$ be minimal, where $i : \beta \hookrightarrow \alpha$ for some $\beta : [k] \to [n]$.
  By restriction of the product we then obtain a short exact sequence of
  $\cat{C}$-modules
  \begin{equation}
    \label{eqn:ses-resolution-sieve}
    \begin{tikzcd}[column sep = 0.5cm]
      0 \ar{r} &
      \Map{\cat{C}}(X_{\beta(0)}, -)[-k] \ar{r} &
      C(X, f)(S \cup \{ i \}) \ar{r} &
      C(X, f)(S) \ar{r} &
      0
    \end{tikzcd}
  \end{equation}

  $\Map{\cat{C}}(X_{\beta(0)}, -)[-k]$ is representable by the shifted
  object $X[k]$ and $C(X, f)(S)$ is representable by the inductive assumption.
  Thus by Lemma\ \ref{lem:dg-ses-representable} we have that
  $S(X, f)(S \cup \{\beta\})$
  is representable as well.
  For Reedy cofibrancy, observe that the latching map for an object $\alpha \in
  D[n]$ is represented by the map of $\cat{C}$-modules
  \[ C(X, f)(D[n] \downarrow \alpha) \longrightarrow C(X, f)(\partial(D[n] \downarrow \alpha)), \]
  which is a cofibration by construction.
\end{proof}

\begin{example}
  Let $\cat{C}$ be a pretriangulated dg-category and 
  $(X, f)$ a $0$-simplex in the dg-nerve of $\cat{C}$.
  Then $f$ vanishes anywhere but for $f(\seq{0, 0}) = \id$,
  so for any $\alpha : [n] \to [0]$ we have a natural isomorphism
  \[ 
    C(X, f)(\alpha) \cong
    \Map{\Ch}(\Moore_*(\Delta[n]), \Map{\cat{C}}(X_0, -))
  \]
  where $\Moore_*(\Delta[n])$ is the complex of normalized chains of
  $\Delta[n]$. Thus we see that the representation $B(X, f)$ is given by a
  copowering with $\Moore_*(\Delta[n])$.
\end{example}

\begin{example}
  Let $\cat{C}$ be a pretriangulated dg-category and
  $(X, f)$ an $n$-simplex of $\dgNerve(\cat{C})$. For any $i \in [n]$
  we have
  \[ 
    C(X, f)(\seq{i}) = 
    \Map{\Ch}(\Z \langle 0 \rangle, \Map{\cat{C}}(X_i, -)) \cong
    \Map{\cat{C}}(X_i, -),
  \]
  and thus $B(X, f)(\seq{i}) \cong X_i$.
\end{example}

\begin{example}
  Let $(X, f)$ be an $n$-simplex of $\dgNerve(\ChCh)$. Then
  for any $0 \leq i < j \leq n$, $B(X, f)(\seq{i, j})$ is the mapping
  cylinder of the chain map $f(\seq{i, j}) : X_i \to X_j$. The
  inclusions of the sequences $\seq{i}$ and $\seq{j}$ into $\seq{i, j}$ induce
  the standard inclusions of $X_i$ and $X_j$ into the mapping cylinder.
\end{example}

\section{The Resolutions are Homotopical}\label{sec:homotopical}

The weak equivalences in the category $D[n]$ are those injections of sequences
that preserve the maximum element. For a pretriangulated dg-category $\cat{C}$
and an $n$-simplex $(X, f)$, we will show that the functor $B(X, f) : D[n] \to
\cat{C}$ is homotopical.
We begin by defining a homotopy retraction of $C(X, f)(\alpha)$ onto its
restriction to the last index of $\alpha$. For notational convenience, we
identify the isomorphic $\cat{C}$-modules $C(X, f)(\seq{\max \alpha})$ with
$\Map{\cat{C}}(X_{\max \alpha}, -)$.

\begin{lem}\label{lem:last-inclusion-equivalence}
  Let $\cat{C}$ be a dg-category, $(X, f)$ an $n$-simplex of the dg-nerve
  of $\cat{C}$, and $\alpha : [k] \to [n]$. Consider the map of
  $\cat{C}$-modules
  \[ I : C(X, f)(\alpha) \longrightarrow C(X, f)(\seq{\max \alpha}) \]
  induced by the inclusion $\seq{\max \alpha} \hookrightarrow \alpha$
  at the last index. Then
  \begin{align*}
    P : \Map{\cat{C}}(X_{\max \alpha}, -) &\longrightarrow C(X, f)(\alpha) \\
    \omega \longmapsto i &\longmapsto \sign{|i|} \omega \circ f((\alpha \circ i) \cdot \seq{\max \alpha})
  \end{align*}
  is a closed $0$-cycle and a retraction of $I$. Moreover, 
  \begin{alignat*}{3}
    H : C(X, f)(\alpha) &\longrightarrow C(X, f)(\alpha) \\
    \varphi \longmapsto i &\longmapsto \sign{|\varphi| + |i|} \varphi(i \cdot \seq{k})
    &&\quad \text{(if $\max i < k$)} \\
    \varphi \longmapsto i &\longmapsto 0 &&\quad \text{(if $\max i = k$)}.
  \end{alignat*}
  defines a homotopy $P \circ I \simeq \id$.
\end{lem}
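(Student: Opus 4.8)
The plan is to verify each of the three asserted properties of $I$, $P$, and $H$ by direct computation with the expanded differential formula for $d_f$, using the Maurer--Cartan identity $df + f * f = 0$ and strict unitality at every step. First I would check that $P$ is a closed $0$-cycle. The map $P$ sends $\omega \in \Map{\cat{C}}(X_{\max\alpha}, -)$ to the element of $C(X,f)(\alpha)$ whose value on $i : \beta \hookrightarrow \alpha$ is $\sign{|i|}\omega \circ f((\alpha\circ i)\cdot \seq{\max\alpha})$. Applying $d_f$ and expanding on a generator $\seq{i_0,\ldots,i_k}$, the ``inner face'' term of $d_f$ acting on $P(\omega)$ produces $\sum_j \sign{j}\sign{|i|-1}\omega\circ f(\seq{\alpha(i_0),\ldots,\widehat{\alpha(i_j)},\ldots,\alpha(i_k),\max\alpha})$, the ``$*f$'' term produces the composites $\omega\circ f(\seq{\alpha(i_j),\ldots,\alpha(i_k),\max\alpha})\circ f(\seq{\alpha(i_0),\ldots,\alpha(i_j)})$ (note $\max i_j = \max\alpha$ forces a unit, handled by the unitality clause), and the $d(\varphi(\cdot))$ term contributes $\omega\circ df(\cdot)$. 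Grouping the inner-face terms, the $j=k$ omission-of-$\max\alpha$ term, and the composite terms and matching them against $f(d\gamma) + (f*f)(\gamma) = 0$ for $\gamma = (\alpha\circ\seq{i_0,\ldots,i_k})\cdot\seq{\max\alpha}$ shows $d_f P(\omega) = 0$; one also uses $d\omega = 0$ since $\omega$ is itself a cycle (or tracks $d\omega$ through if working with general homogeneous $\omega$, but for the $0$-cycle claim one may assume $\omega$ closed, or simply verify $d_f\circ P = P\circ d$).

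Next, $I\circ P = \id$ is immediate: $I$ restricts to the index $\seq{\max\alpha} \hookrightarrow \alpha$, on which $P(\omega)$ takes the value $\sign{0}\,\omega\circ f(\seq{\max\alpha}\cdot\seq{\max\alpha}) = \omega\circ f(\seq{\max\alpha,\max\alpha}) = \omega\circ\id = \omega$ by strict unitality. So $P$ is a section of $I$. The remaining and only substantive point is that $H$ is a chain homotopy from $P\circ I$ to $\id$, i.e.\ $d_f H + H d_f = \id - P\circ I$ as maps $C(X,f)(\alpha)\to C(X,f)(\alpha)$. I would evaluate both sides on a homogeneous $\varphi$ and a generator $i = \seq{i_0,\ldots,i_k}$, splitting into the two cases $\max i < k$ and $\max i = k$ dictated by the definition of $H$. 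In the case $\max i = k$ we have $H\varphi(i) = 0$; the term $H(d_f\varphi)(i)$ also vanishes because every inner face and every split factor of a sequence ending in $k$ still ends in $k$, and $P(I\varphi)(i) = \sign{|i|}\varphi(\seq{\max\alpha})\circ f((\alpha\circ i)\cdot\seq{\max\alpha})$ — here the last-index condition makes the relevant $f$-factor a unit via unitality, so the right side collapses to $\varphi(i)$ and both sides agree. In the case $\max i < k$ we have $i\cdot\seq{k}$ a legitimate generator of $\PathMod(\alpha)$ with $\max(i\cdot\seq{k}) = k$; expanding $d_f(H\varphi)$ on $i$ and $H(d_f\varphi)$ on $i$ via the formula for $d_f$, the inner-face terms of the two telescope against each other (the face omitting the appended $k$ recovers $\varphi(i)$, contributing the $+\id$), the extreme face omitting $i_0$ combines with the $*f$ terms to reproduce (up to sign) the value $P(I\varphi)(i)$, and all remaining cross terms cancel in pairs once the signs $\sign{|\varphi|+|i|}$ in the definition of $H$ are accounted for.

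The main obstacle is bookkeeping: getting the Koszul signs to line up in the homotopy identity for $H$, since $H$ shifts internal degree, $d_f$ has three terms each with its own sign, and the appended-index operation $i\mapsto i\cdot\seq{k}$ interacts with the grading. The cleanest way to control this is to phrase $P$, $I$, and $H$ as convolution-type operations with respect to $\PathMod(\alpha)$ and $\PathAlg(n)$ — e.g.\ $H$ is essentially convolution with the ``cap by $\seq{k}$'' map on $\PathMod(\alpha)$ — so that the homotopy identity becomes a formal consequence of a comodule identity for $\PathMod(\alpha)$ together with $df + f*f = 0$, mirroring the style of the proof that $d_f^2 = 0$. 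I expect the rest (the $I\circ P = \id$ and closedness claims) to be routine once unitality is invoked in the right places.
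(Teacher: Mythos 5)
Your overall strategy coincides with the paper's: verify that $P$ is a chain map by expanding $d_f P(\omega)$ and invoking the Maurer--Cartan identity, get $I \circ P = \id$ from strict unitality applied to $f(\seq{\max\alpha,\max\alpha})$, and establish the homotopy identity by evaluating on generators $i$ and splitting into the cases $\max i = k$ and $\max i < k$. That skeleton is correct. However, the sketch of the case analysis contains errors that would prevent the verification from closing as written.

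In the case $\max i = k$ with $|i| > 0$ you claim that $(P \circ I)(\varphi)(i) = \sign{|i|}\varphi(\seq{\max\alpha}) \circ f((\alpha \circ i)\cdot\seq{\max\alpha})$ ``collapses to $\varphi(i)$'' because unitality makes the $f$-factor a unit. This is backwards: $(\alpha\circ i)\cdot\seq{\max\alpha}$ is a non-injective sequence of length at least three, and strict unitality forces $f$ of such a sequence to be \emph{zero}, not the identity; the identity clause applies only to the two-element sequence $\seq{\max\alpha,\max\alpha}$, i.e.\ only when $i = \seq{k}$. So $(P\circ I)(\varphi)(i) = 0$ here, and the term $\mp\varphi(i)$ coming from $\mp\id$ must instead be matched by $d_f(H\varphi)(i)$. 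That term exists precisely because the differential $d^*$ of $\PathMod(\alpha)$ runs over $j = 1,\dots,|i|$ and therefore \emph{includes} the face omitting the last element $k$; applying $H$ re-appends $k$ and recovers $\pm\varphi(i)$. Your assertion that ``every inner face \ldots of a sequence ending in $k$ still ends in $k$'' conflates $d^*$ on $\PathMod(\alpha)$ with the inner-faces-only differential of $\PathAlg(n)$ and erases exactly the term that saves the identity. Relatedly, in the case $\max i < k$ the summand equal to $(P\circ I)(\varphi)(i)$ does not come from ``the extreme face omitting $i_0$'' (that face does not appear in $d^*$ at all); it is the single uncancelled summand of the convolution comparison $H(\varphi) * f$ versus $H(\varphi * f)$, namely the splitting of $i\cdot\seq{k}$ at the final $\seq{k}$, which produces $\sign{|i|}\varphi(\seq{k}) \circ f(\alpha\circ i\cdot\seq{\max\alpha})$. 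These are not mere sign slips: they misidentify which terms of $d_f H + H d_f$ produce the $\id$ and the $P\circ I$ contributions, so the proposed bookkeeping would not balance. Your suggested reformulation of $H$ as a convolution-type operator is a reasonable way to systematize the signs, but it does not by itself repair the two points above.
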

\begin{proof}
  Let $\omega \in \Map{\cat{C}}(X_{\max \alpha}, A)$ for any object $A$ and
  $i : \beta \hookrightarrow \alpha$. Then we have
  \begin{align}
    (dP)(\omega)(i)
    &= \sign{|i|} d\omega \circ f(\alpha \circ i)
    - \sign{|i|} d\omega \circ f(\alpha \circ i)
    \label{eqn:homotopy-projection-1} \\
    &+ \sign{|\omega| + |i|} \omega \circ df(\alpha \circ i \cdot \seq{\max \alpha})
    - \sign{|\omega|} (P(\omega) * f)(i)
    \label{eqn:homotopy-projection-2} \\
    &- \sign{|\omega| + |i|} \omega \circ f(d(\alpha \circ i \cdot \seq{\max \alpha}))
    - \sign{|\omega|} P(\omega)(d^*i).
    \label{eqn:homotopy-projection-3}
  \end{align}

  The term (\ref{eqn:homotopy-projection-1}) vanishes immediately.
  In (\ref{eqn:homotopy-projection-2}) we can substitute $-(f * f)$ for $df$ and
  observe that the convolutions cancel after expansion. Analogously, the
  alternating face sums in (\ref{eqn:homotopy-projection-3}) cancel after
  expansion. Thus $P$ is a chain map.

  For any $\omega \in \Map{\cat{C}}(X_{\max \alpha} -)$, we see that
  \[ 
    (I \circ P)(\omega) =
    P(\omega)(\seq{\max \alpha}) =
    \omega \circ f(\seq{\max \alpha, \max \alpha}) =
    \omega,
  \]
  so $P$ is a retraction of $I$.
  
  It remains to show that $dH = P \circ I - \id$. For $\varphi \in C(X, f)(\alpha)$, we have
  \begin{align}
    (dH)(\varphi)
    &= d \circ H(\varphi) + H(d \circ \varphi)
    \label{eqn:homotopy-1} \\
    &+ \sign{|\varphi|} H(\varphi) * f - \sign{|\varphi|} H(\varphi * f)
    \label{eqn:homotopy-2} \\
    &+ \sign{|\varphi|} H(\varphi) \circ d^* - \sign{|\varphi|} H(\varphi \circ d^*).
    \label{eqn:homotopy-3}
  \end{align}
  We evaluate this on elements $i \in \PathMod(\alpha)$ in three cases to see
  that $dH = P \circ I - \id{}$:

  \begin{enumerate}
    \item Let $i \in \PathMod(\alpha)$ with $\max i < k$. The terms in
      (\ref{eqn:homotopy-1}) cancel each other. Evaluated on $i$, the
      convolutions in (\ref{eqn:homotopy-2}) cancel except for one summand
      \[ 
      \sign{|i|} \varphi(\seq{k}) \circ f(\alpha \circ i \cdot \seq{\max \alpha})
      = (P \circ I)(\varphi)(i).
      \]
      Similarly, the alternating face sums of (\ref{eqn:homotopy-3}) cancel except for one summand
      \[- \varphi(i) = -\id{}(\varphi)(i).\]

    \item Now consider $i \in \PathMod(\alpha)$ with $\max i = |\alpha|$ and
      $|i| > 0$ and calculate
      \begin{align}
        (dH)(\varphi)(i)
        &= \sign{|\varphi|} (H(\varphi) * f)(i) \label{eqn:homotopy-last-1} \\
        &+ \sign{|\varphi|} (H(\varphi) \circ d^*)(i) \label{eqn:homotopy-last-2}.
      \end{align}
      Then (\ref{eqn:homotopy-last-1}) vanishes since $H$ vanishes on any suffix of
      $i$, so it agrees with
      \[ (P \circ I)(\varphi)(i) = \sign{|i|} \varphi(\seq{k})
        \circ f(\alpha \circ i \cdot \seq{\max \alpha}) = 0, \]
      which is zero since $\alpha \circ i \cdot \seq{\max \alpha}$ is not injective
      and has more than two elements. The only summand in $d^*i$
      that does not contain $k$ is the prefix of $i$ of length $|i| - 1$, which
      is promptly extended again in (\ref{eqn:homotopy-last-2}) to
      the original $i$. So (\ref{eqn:homotopy-last-2}) evaluates to
      $\varphi(i)$.

    \item Finally observe that $dH(\varphi)(\seq{k}) = 0$ as well as
      \[
         (P \circ I - \id{})(\varphi)(\seq{k}) = 
         \varphi(\seq{k}) \circ f(\seq{\max \alpha, \max \alpha}) - \varphi(\seq{k}) = 0.\qedhere
       \]
  \end{enumerate}
\end{proof}

The case distinction in the definition of the homotopy $H$ was neccessary since
$\PathMod(\alpha)$ is not closed under appending the maximum element of
$\alpha$.

\begin{prop}\label{prop:b-homotopical}
  Let $\cat{C}$ be a dg-category and $(X, f)$ an $n$-simplex of the
  dg-nerve of $\cat{C}$.
  \begin{enumerate}[noitemsep]
    \item The functor $C(X, f): \op{D[n]} \to \Mod{\cat{C}}$ is homotopical.
    \item The functor $B(X, f): D[n] \to {\cat{C}}_0$ is homotopical.
  \end{enumerate}
\end{prop}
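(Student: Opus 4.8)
The plan is to reduce both statements to the single computation in Lemma~\ref{lem:last-inclusion-equivalence}, using the characterization of weak equivalences in $D[n]$ as the injections of sequences that preserve the maximum. First I would recall that $D[n]$ is generated, under composition, by two kinds of maps: those that drop an index strictly below the maximum (weak equivalences) and those that adjoin a new maximum (not weak equivalences in general). Since $C(X,f)$ and $B(X,f)$ are functorial and a functor is homotopical as soon as it sends a generating set of weak equivalences to weak equivalences, it suffices to treat a single map $\sigma : \beta \hookrightarrow \alpha$ in $D[n]$ with $\max\beta = \max\alpha$.

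For part~(1), fix such a $\sigma$ and write $m = \max\alpha = \max\beta$. The key observation is that the inclusion $I : C(X,f)(\alpha) \to C(X,f)(\seq{m})$ of Lemma~\ref{lem:last-inclusion-equivalence} factors through $C(X,f)(\sigma)^* : C(X,f)(\alpha) \to C(X,f)(\beta)$, because $\seq{m}\hookrightarrow\beta\hookrightarrow\alpha$. By Lemma~\ref{lem:last-inclusion-equivalence} both $C(X,f)(\alpha)\to C(X,f)(\seq m)$ and $C(X,f)(\beta)\to C(X,f)(\seq m)$ are chain homotopy equivalences of $\cat{C}$-modules (each comes equipped with an explicit section $P$ and homotopy $H$). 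Since weak equivalences in $\cat{C}$-modules are the levelwise chain homotopy equivalences and these satisfy 2-out-of-3, the map $C(X,f)(\sigma)^*$ is a weak equivalence. Hence $C(X,f)$ sends every generating weak equivalence of $\op{D[n]}$ to a weak equivalence, so it is homotopical.

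For part~(2), note that $B(X,f)$ is the composite of $C(X,f) : \op{D[n]} \to \Mod{\cat{C}}$ with the (contravariant) representing-object functor, which by definition identifies $B(X,f)(\alpha)$ as the object representing $C(X,f)(\alpha)$. A map $g$ in $\cat{C}_0$ is a weak equivalence precisely when the induced map $\Map{\cat{C}}(-, -)$ — equivalently, when the represented map of $\cat{C}$-modules — is a weak equivalence; so representability turns the weak equivalences established in part~(1) into weak equivalences in $\cat{C}_0$. Concretely, the map $B(X,f)(\sigma)$ represents $C(X,f)(\sigma)^*$, and by Lemma~\ref{lem:acyclic-cofibration-retraction}-type reasoning (or directly: a $0$-cycle whose represented module map is a chain homotopy equivalence is a weak equivalence in $\cat{C}_0$, since homology of mapping complexes computes $\Hom{H(\cat{C})}$) it is a weak equivalence. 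Therefore $B(X,f)$ is homotopical.

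The main obstacle is purely bookkeeping: verifying that the factorization $\seq{m}\hookrightarrow\beta\hookrightarrow\alpha$ behaves correctly with respect to the homotopy $H$ and section $P$ — in particular that the sign conventions in $P$ and $H$ are compatible under restriction along $\sigma$ — and confirming that "weak equivalence in $\cat{C}_0$" is detected exactly by "represented module map is a levelwise chain homotopy equivalence." Both of these are routine once the definitions are unwound, and neither requires any new idea beyond Lemma~\ref{lem:last-inclusion-equivalence}; the genuine content of the proposition is already contained in that lemma's explicit $P$ and $H$.
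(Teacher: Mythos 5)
Your overall strategy --- reduce everything to Lemma~\ref{lem:last-inclusion-equivalence} and close with 2-out-of-3, then transfer to $\cat{C}_0$ via representability --- is the same as the paper's, and your part (2) is exactly the paper's argument. But part (1) has a genuine gap: the factorization $\seq{m}\hookrightarrow\beta\hookrightarrow\alpha$ does not in general compose to the \emph{last-index} inclusion of $\seq{m}$ into $\alpha$, which is the only inclusion Lemma~\ref{lem:last-inclusion-equivalence} covers. Objects of $D[n]$ are arbitrary order-preserving maps, not only injective ones, and a weak equivalence $\sigma : \beta \hookrightarrow \alpha$ (say $\beta : [k]\to[n]$, $\alpha:[l]\to[n]$) is only required to preserve the maximum \emph{value}, not the top \emph{index}: it can happen that $\sigma(k) < l$ while $\alpha(\sigma(k)) = \alpha(l)$. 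For instance $\beta = \seq{0,1} \hookrightarrow \alpha = \seq{0,1,1}$ via the inclusion of the first two indices is a weak equivalence in $D[1]$, but the composite $\seq{1}\hookrightarrow\beta\hookrightarrow\alpha$ lands at index $1$ of $\alpha$, not at the last index $2$. These are distinct objects of $D[n]\downarrow\alpha$, hence distinct projections out of the product defining $C(X,f)(\alpha)$, so your identity ``$I_\alpha = I_\beta \circ \sigma^*$'' fails and 2-out-of-3 does not close. Restricting to elementary one-index drops does not avoid this: dropping the last index of $\seq{0,1,1}$ is already such a case.

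The paper fills exactly this hole with an additional explicit chain homotopy, namely
$\varphi \mapsto \sign{|\varphi|}\,\varphi(\seq{\sigma(k), l})$,
which shows that the projection of $C(X,f)(\alpha)$ onto the component at index $\sigma(k)$ is chain homotopic to the projection onto the component at the last index $l$; only then does Lemma~\ref{lem:last-inclusion-equivalence} plus 2-out-of-3 give that $\sigma^*$ is a weak equivalence. This extra homotopy is a genuinely separate computation, not subsumed by the $P$ and $H$ of the lemma, and it is not the sign bookkeeping you identify as the only remaining obstacle. The rest of your argument (the reduction to generating maps, and the passage from $\cat{C}$-modules to $\cat{C}_0$) is sound.
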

\begin{proof}
  For the first claim, we consider order-preserving maps $\alpha : [k] \to [n], \beta : [r] \to [n]$
  and let $i : \alpha \hookrightarrow \beta$ be a weak equivalence
  in $D[n]$. Then we have a diagram
  \[
    \begin{tikzcd}
      \alpha \ar{rr}{i} && \beta \\
      & \seq{\alpha(k)} \ar[swap]{ur}{q} \ar{ul}{p}
    \end{tikzcd}
  \]
  in $D[n]$ where $p$ is the inclusion at the last index and
  $q$ the inclusion at index $i(k)$.
  Then we get an induced diagram of $\cat{C}$-modules
  \[
    \begin{tikzcd}
      C(X, f)(\alpha) \ar[swap]{dr}{p^*} &&
      C(X, f)(\beta) \ar{dl}{q^*} \ar[swap]{ll}{i^*} \\
      & \Map{\cat{C}}(X_{\alpha(k)}, -)
    \end{tikzcd}
  \]
  $p^*$ is a weak equivalence by Lemma\ \ref{lem:last-inclusion-equivalence}.
  If $i(k) = r$, then $q$ also is the inclusion at the last index and thus
  also a weak equivalence, whence $i^*$ is a weak equivalence by 2-out-of-3.
  Otherwise $i(k) < r$, but then
  \begin{align*}
    C(X, f)(\beta) &\longrightarrow \Map{\cat{C}}(X_{\alpha(k)}, -) \\
    \varphi &\longmapsto \sign{|\varphi|} \varphi(\seq{i(k), r})
  \end{align*}
  is a homotopy between $q^*$ and the map induced by the inclusion at $r$,
  so $q^*$ is a weak equivalence as well.
  
  Now the second claim follows since a chain map of representable
  $\cat{C}$-modules is a chain homotopy equivalence precisely if it
  represents a map in $\cat{C}_0$ that is a weak equivalence.
\end{proof}

\section{Functor of Quasicategories}\label{sec:functor}

For any $n$-simplex $(X, f)$ of the dg-nerve $\dgNerve(\cat{C})$ of a
pretriangulated dg-category, by Proposition~\ref{prop:b-representable-reedy} 
we have constructed a Reedy cofibrant functor
$B(X, f) : D[n] \to \cat{C}_0$ which is homotopical by Proposition~\ref{prop:b-homotopical},
and thus an $n$-simplex of ${\fNerve(\cat{C}_0)}$.
For $B$ to define a functor between these quasicategories, it has to be
compatible with the simplicial structure maps.

Objects of the slice categories $D[m] \downarrow \alpha$ for some
$\alpha : [k] \to [m]$ are restrictions of $\alpha$ to some subset of $S
\subseteq [k]$, and morphisms are inclusions of these subsets. Hence
postcomposition with some map $\sigma : [m] \to [n]$ induces an isomorphism of
the slice categories $D[m] \downarrow \alpha \to D[m] \downarrow (\sigma \circ \alpha)$.
We use this to show:

\begin{lem}\label{lem:cell-simplicial-iso}
  Let $\alpha : [k] \to [m]$ and $\sigma: [m] \to [n]$. Consider
  $\PathMod(\alpha)$ to be a $\PathAlg(n)$-comodule via base change along the
  map of dg-coalgebras $\PathAlg(m) \to \PathAlg(n)$ induced by $\sigma$.
  Then the map
  $\sigma_* : \PathMod(\alpha) \to \PathMod(\sigma \circ \alpha)$
  induced by $\sigma$ is an isomorphism of $\PathAlg(n)$-comodules.
\end{lem}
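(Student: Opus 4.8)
The statement claims that $\sigma_* : \PathMod(\alpha) \to \PathMod(\sigma\circ\alpha)$ is an isomorphism of $\PathAlg(n)$-comodules. I would first pin down what the map $\sigma_*$ actually is on generators. A generator of $\PathMod(\alpha)$ is an inclusion $i : \beta \hookrightarrow \alpha$ in $D[m]$, i.e.\ a factorisation of $\beta : [k'] \to [m]$ through $\alpha : [k] \to [m]$ by an injection $[k'] \hookrightarrow [k]$; equivalently, as noted just before the lemma, it is a subset $S \subseteq [k]$ together with the restriction $\alpha\restriction S$. Postcomposing with $\sigma$ sends this to the subset $S \subseteq [k]$ with restriction $(\sigma\circ\alpha)\restriction S$, which is exactly an object of $D[n] \downarrow (\sigma\circ\alpha)$, and morphisms (inclusions of subsets) are carried across verbatim. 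So $\sigma_*$ is, on the level of underlying graded abelian groups, the map induced by the bijection of slice categories $D[m]\downarrow\alpha \xrightarrow{\sim} D[n]\downarrow(\sigma\circ\alpha)$ already observed in the paragraph preceding the lemma. Being induced by a bijection of (finite, degreewise) index sets, it is degreewise an isomorphism of free abelian groups.

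**Compatibility with the structure.** It then remains to check that this underlying isomorphism is a map of differential graded $\PathAlg(n)$-comodules, i.e.\ that it commutes with $d^*$ and intertwines the comultiplications $\delta$. For the differential: $d^*\seq{i_0,\ldots,i_k}$ is the alternating sum of the inner-and-terminal faces $\seq{i_0,\ldots,\hat{i_j},\ldots,i_k}$ (the index $j$ runs $1$ to $k$), and this face operation is expressed purely in terms of the injection $[k']\hookrightarrow[k]$ defining the subset $S$ — it does not see $\alpha$ at all, only $S$. Since $\sigma_*$ fixes $S$ and only relabels $\alpha\rightsquigarrow\sigma\circ\alpha$, the signs and the combinatorics of $d^*$ match on the nose, so $\sigma_* \circ d^* = d^* \circ \sigma_*$. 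For the comultiplication: $\delta\seq{i_0,\ldots,i_k}$ splits the sequence as $\sum_j \sign{j(k-j)} \seq{i_j,\ldots,i_k}\otimes(\alpha\circ\seq{i_0,\ldots,i_j})$, with the comodule factor in $\PathMod$ and the algebra factor in $\PathAlg$. On the $\PathMod$ side the splitting again only involves the subset data, so $\sigma_*$ carries it across unchanged; on the $\PathAlg$ side the factor $\alpha\circ\seq{i_0,\ldots,i_j}$ becomes $(\sigma\circ\alpha)\circ\seq{i_0,\ldots,i_j} = \sigma\circ(\alpha\circ\seq{i_0,\ldots,i_j})$, which is precisely the image of $\alpha\circ\seq{i_0,\ldots,i_j}$ under the coalgebra map $\PathAlg(m)\to\PathAlg(n)$ by which we declared $\PathMod(\alpha)$ to be a $\PathAlg(n)$-comodule. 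Hence the square
\[
  \begin{tikzcd}[column sep = large]
    \PathMod(\alpha) \ar{r}{\delta} \ar[swap]{d}{\sigma_*} &
      \PathMod(\alpha) \otimes \PathAlg(n) \ar{d}{\sigma_* \otimes \id} \\
    \PathMod(\sigma\circ\alpha) \ar{r}{\delta} &
      \PathMod(\sigma\circ\alpha) \otimes \PathAlg(n)
  \end{tikzcd}
\]
commutes, and all the signs agree because they depend only on the lengths $j$, $k$, which $\sigma_*$ preserves.

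**The main obstacle.** There is no deep content here — the lemma is essentially a bookkeeping statement that $\PathMod(\alpha)$ depends on $\alpha$ only through its domain $[k]$ and the subset lattice of $[k]$, with $\alpha$ entering merely as a label that the comodule structure map reads off. The only thing requiring genuine care is the $\PathAlg(n)$-comodule statement: one must be scrupulous that the coalgebra map $\PathAlg(m)\to\PathAlg(n)$ induced by $\sigma$ (which sends $\seq{\gamma_0,\ldots,\gamma_r} \mapsto \seq{\sigma\gamma_0,\ldots,\sigma\gamma_r}$, and which one should briefly remark is indeed a map of dg-coalgebras — it commutes with the inner-face differential and the splitting comultiplication for the same degree-only-sign reasons) is exactly the one used to base-change the $\PathMod(\alpha)$ coaction, so that the right-hand vertical arrow in the square above is genuinely $\PathAlg(n)$-linear. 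Once the definitions are unwound this is immediate; the risk is purely that of a sign slip, which is avoided by the observation that every sign in $d^*$ and $\delta$ is a function of the sequence length alone.
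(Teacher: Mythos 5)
Your proof is correct and follows essentially the same route as the paper's: the paper likewise observes that $\sigma_*$ has an evident inverse sending $(\sigma\circ\alpha)\restriction_S$ to $\alpha\restriction_S$, and that both comultiplications split subsequences of $[k]$ and apply $\sigma\circ\alpha$ to the suffix, which is exactly your base-change compatibility check. You simply spell out in more detail (the slice-category bijection, the sign bookkeeping, the commuting square) what the paper states in two sentences.
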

\begin{proof}
  The map $\PathMod(\sigma \circ \alpha) \to \PathMod(\alpha)$ that sends a
  restriction $(\sigma \circ \alpha) \restriction_S$ to $\alpha \restriction_S$
  is the inverse chain map of $\sigma_*$. Both comultiplications split
  subsequences of $[k]$ and then apply $\sigma \circ \alpha$ to the suffix,
  which is preserved by both $\sigma_*$ and its inverse.
\end{proof}

In the definition of $B$ we have chosen representatives in $\cat{C}$ for the
representable $\cat{C}$-modules. Assuming that we choose a single representing
object for every isomorphism class of $\cat{C}$-modules, we can show the following:

\begin{prop}\label{prop:b-functor}
  Let $\cat{C}$ be a dg-category. Then the assignment
  $(X, f) \longmapsto B(X, f)$ is a map of simplicial sets
  $B : \dgNerve(\cat{C}) \longrightarrow \fNerve(\cat{C}_0)$,
  and thus a functor of quasicategories.
\end{prop}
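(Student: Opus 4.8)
The plan is to verify that $B$ respects the two simplicial identities: for an order-preserving $\sigma : [m] \to [n]$ and an $n$-simplex $(X,f)$, we must show $\sigma^*(B(X,f)) = B(\sigma^*(X,f))$ as Reedy cofibrant diagrams $D[m] \to \cat{C}_0$. Since the simplices of $\fNerve(\cat{C}_0)$ are honest diagrams and the simplicial action is by precomposition along $D\sigma : D[m] \to D[n]$, this reduces to a levelwise statement: for each $\alpha : [k] \to [m]$, the object $B(\sigma^*(X,f))(\alpha)$ and the object $B(X,f)(\sigma \circ \alpha)$ represent the same $\cat{C}$-module, and moreover the structure maps (i.e. the images of inclusions $\beta \hookrightarrow \alpha$) agree. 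Given the convention that a single representing object is chosen per isomorphism class of $\cat{C}$-module, it then suffices to exhibit a natural isomorphism $C(\sigma^*(X,f))(\alpha) \cong C(X,f)(\sigma \circ \alpha)$ compatible with the functoriality in $\alpha$.

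First I would unwind the definition of $\sigma^*(X,f)$: its objects are $X_{\sigma(0)}, \ldots, X_{\sigma(m)}$ and its maps are $f(\sigma \circ -)$, i.e. the twisted cochain $\PathAlg(m) \to \dgCatAlg{\cat{C}}$ obtained by precomposing $f : \PathAlg(n) \to \dgCatAlg{\cat{C}}$ with the coalgebra map $\PathAlg(m) \to \PathAlg(n)$ induced by $\sigma$ (this is exactly the description of the simplicial action read through Lemma~\ref{lem:nerve-maurer-cartan}). Next, Lemma~\ref{lem:cell-simplicial-iso} supplies an isomorphism of $\PathAlg(n)$-comodules $\sigma_* : \PathMod(\alpha) \to \PathMod(\sigma \circ \alpha)$, where $\PathMod(\alpha)$ carries its $\PathAlg(n)$-comodule structure by base change along $\sigma$. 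Dualizing and using that the differential $d_f$ on $C(X,f)(-)$ is built only from the underlying mapping-space differential and the convolution action $(-) * f$ of the twisted cochain, I would check that precomposition with $\sigma_*$ carries the defining data of $C(X,f)(\sigma \circ \alpha)$ (its differential uses $f$, evaluated on sequences in $[n]$ that lie in the image of $\sigma$) precisely to that of $C(\sigma^*(X,f))(\alpha)$ (its differential uses $f \circ \sigma_*$). The key point is that the convolution product $\varphi * f = m \circ (\varphi \otimes f) \circ \delta$ is intertwined by $\sigma_*$ because $\sigma_*$ is a comodule map over the base-changed coalgebra structure — this is exactly the content of Lemma~\ref{lem:cell-simplicial-iso}. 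Since the object indices also match ($(\alpha \circ i)(0)$ maps to $(\sigma \circ \alpha \circ i)(0)$ under $\sigma$, giving $X_{(\sigma\circ\alpha\circ i)(0)}$ in both cases), we get the desired isomorphism of $\cat{C}$-modules, and it is natural in $\alpha$ because $\sigma_*$ is natural in $\alpha$.

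Finally, I would observe that because $C(\sigma^*(X,f))(\alpha) \cong C(X,f)(\sigma\circ\alpha)$ naturally, and because we have fixed one representing object per isomorphism class, the representations $B(\sigma^*(X,f))(\alpha)$ and $B(X,f)(\sigma\circ\alpha)$ are literally equal objects of $\cat{C}$, with equal structure maps; hence $\sigma^*(B(X,f)) = B(\sigma^*(X,f))$ on the nose. Checking this for the generating face and degeneracy maps of $\Delta$ (or directly for all $\sigma$, which is no harder) establishes that $B$ is simplicial. A functor of simplicial sets into a quasicategory is automatically a functor of quasicategories, giving the last clause.

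\medskip

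\textbf{Main obstacle.} The only subtle point is tracking that the isomorphism of Lemma~\ref{lem:cell-simplicial-iso} is compatible with the \emph{twisted} differential $d_f$ rather than merely with the bare comodule structure — i.e. that $\sigma_*$ really does intertwine $(-) * f$ on $C(X,f)(\sigma\circ\alpha)$ with $(-) * (f\circ\sigma_*)$ on $C(\sigma^*(X,f))(\alpha)$, including the sign conventions. This is a bookkeeping computation unwinding $m \circ (\varphi \otimes f) \circ \delta$ through the explicit formulas for $\delta$ on $\PathMod$ and the base-changed comultiplication, and it is the step I would write out in detail; everything else is formal.
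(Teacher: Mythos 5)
Your proposal is correct and follows essentially the same route as the paper's own proof: reduce to the levelwise isomorphism $C(\sigma^*(X,f))(\alpha) \cong C(X,f)(\sigma\circ\alpha)$, use Lemma~\ref{lem:cell-simplicial-iso} to see that $\sigma_*$ intertwines the twisted differentials via the base-changed comodule structure, and invoke the fixed choice of representatives to upgrade the natural isomorphism to an equality of diagrams. The point you flag as the main obstacle is exactly the observation the paper makes, namely that $(\id \otimes \PathAlg(\sigma))\circ\delta$ is the base-changed comultiplication, so compatibility with $d_f$ follows from the comodule-map property.
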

\begin{proof}
  Let $(X, f) \in {\dgNerve(\cat{C})}_n$ and let $\sigma : [m] \to [n]$. 
  The map $\sigma$ acts on the diagram $B(X, f) : D[n] \to \cat{C}_0$ by
  precomposition with $D\sigma : D[m] \to D[n]$. The result is represented by
  the diagram $D[m] \to \Mod{\cat{C}}$ given on $\alpha \in D[m]$ by
  \[ \sigma^*(C(X, f))(\alpha) = \prod_{i \in D[n] \downarrow (\sigma \circ \alpha)} \Map{\cat{C}}(X_{(\sigma \circ \alpha \circ i)(0)}, -) \]
  with differential that sends a homogeneous element $\varphi$ to
  \begin{equation}\label{eqn:b-simplicial-action-diff-1}
    d_f\varphi = d\varphi - \sign{|\varphi|} \varphi * f = d\varphi - \sign{|\varphi|} m \circ (\varphi \otimes f) \circ \delta.
  \end{equation}
  Let $\sigma$ act on the $n$-simplex $(X, f)$, then $B(\sigma^*(X, f))$ is represented
  on $\alpha \in D[m]$ by
  \[ C(\sigma^*(X, f))(\alpha) = \prod_{i \in D[n] \downarrow \alpha} \Map{\cat{C}}(X_{(\sigma \circ \alpha \circ i)(0)}, -) \]
  with the differential sending a homogeneous element $\varphi$ to
  \begin{equation}\label{eqn:b-simplicial-action-diff-2}
    d_{\sigma^* f} \varphi =
    d\varphi - \sign{|\varphi|} \varphi * \sigma^* f =
    d\varphi - \sign{|\varphi|} m \circ (\varphi \otimes f) \circ (\id \otimes \PathAlg(\sigma)) \circ \delta.
  \end{equation}
  Observe that $(\id \otimes \PathAlg(\sigma)) \circ \delta$ is the comultiplication
  of $\PathMod(\alpha)$ considered as a $\PathAlg(n)$-comodule.
  The isomorphism $\sigma_* : \PathMod(\alpha) \to \PathMod(\sigma \circ \alpha)$
  induces an isomorphism of graded objects
  \[ \sigma^*(C(X, f))(\alpha) \cong C(\sigma^*(X, f))(\alpha) \]
  by reindexing the product, which is compatible with the differentials
  (\ref{eqn:b-simplicial-action-diff-1}) and (\ref{eqn:b-simplicial-action-diff-2})
  since $\sigma^*$ and its inverse respect the $\PathAlg(n)$-comodule structures by
  Lemma~\ref{lem:cell-simplicial-iso}.
  
  Now it follows that $\sigma^*(B(X, f)) = B(\sigma^*(X, f))$ since they both
  are the representative for the common isomorphism class of $\sigma^*(C(X,
  f))$ and $C(\sigma^*(X, f))$.
\end{proof}

Both $\dgNerve(\cat{C})$ and $\fNerve(\cat{C}_0)$ are functorial in $\cat{C}$, in that
they send dg-functors to functors of quasicategories. The construction of $B$ required
choices of representing objects and universal constructions that in general are not
respected by arbitrary dg-functors, so $B$ is not strictly natural. However, we can show:

\begin{lem}
  Let $\cat{C}$, $\cat{D}$ be pretriangulated dg-categories and
  $F : \cat{C} \to \cat{D}$ a dg-functor. Then
  \[
    \begin{tikzcd}
      \dgNerve(\cat{C}) \ar{r}{B} \ar[swap]{d}{F_*} &
      \fNerve(\cat{C}_0) \ar{d}{F_*} \\
      \dgNerve(\cat{D}) \ar[swap]{r}{B} 
      \ar[Rightarrow, shorten <= 13pt, shorten >= 13pt]{ur}
      &
      \fNerve(\cat{D}_0)
    \end{tikzcd}
  \]
  commutes up to simplicial homotopy.
\end{lem}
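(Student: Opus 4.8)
The plan is to exhibit an explicit simplicial homotopy between the two composites $F_* \circ B$ and $B \circ F_*$ on the level of the defining $\cat{C}$-module diagrams, and then transport it through the representability equivalence. Fix an $n$-simplex $(X, f)$ of $\dgNerve(\cat{C})$. On one side, $F_*(B(X, f))(\alpha)$ is a chosen representative in $\cat{D}$ of the $\cat{D}$-module obtained by applying $F$ to the representable $\cat{C}$-module $C(X,f)(\alpha)$; because $F$ preserves composition and identities, $F$ sends the short exact sequences~(\ref{eqn:ses-resolution-sieve}) of $\cat{C}$-modules to the corresponding short exact sequences defining $C(F_*(X,f))(\alpha)$ in $\cat{D}$, so the two underlying graded $\cat{D}$-modules are literally equal and their differentials $d_{F_* f}$ and $F_*(d_f)$ agree (the convolution product and the Maurer--Cartan data are carried across because $F$ is a dg-functor and $F_*(X,f)$ has coherence maps $F(f(\seq{i_0,\dots,i_k}))$). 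Hence $F_*(C(X,f))(\alpha)$ and $C(F_*(X,f))(\alpha)$ are canonically isomorphic $\cat{D}$-modules, naturally in $\alpha$ and in the simplicial variable.

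First I would make this canonical isomorphism precise as a natural isomorphism of functors $D[n]\to \cat{D}$-modules, compatible with the simplicial structure in the sense of Proposition~\ref{prop:b-functor}. The obstruction to strict naturality of $B$ is only that the two sides pick possibly different \emph{representing objects} for this common isomorphism class of $\cat{D}$-modules; both $F_*(B(X,f))$ and $B(F_*(X,f))$ are homotopical Reedy cofibrant diagrams $D[n]\to\cat{D}_0$ representing it. So the task reduces to showing that two Reedy cofibrant resolutions of the same diagram of $\cat{D}$-modules, levelwise weakly equivalent via a natural zig-zag (in fact a natural isomorphism after passing to $\cat{C}_0$), are simplicially homotopic as maps into $\fNerve(\cat{D}_0)$.

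For that final step I would argue as in Lemma~\ref{lem:frames-equivalent}, but in families over $D[n]$ rather than at a point. The natural isomorphism of $\cat{D}$-modules gives, after passing to the cycle category, a levelwise weak equivalence of the two Reedy cofibrant diagrams $D[n]\to\cat{D}_0$; viewing this as a homotopical functor out of $D[n]\times\widehat{[1]}$ and using Lemma~\ref{lem:exact-product-d} together with the sieve-extension Lemma~\ref{lem:sd-lift} (applied to the sieve $D[n]\times(\Delta\{0\}\cup\Delta\{1\})\cup\Sd(D[n]\times\widehat{[1]})\hookrightarrow D(D[n]\times\widehat{[1]})$, after a Reedy modification via Lemma~\ref{lem:reedy-modification}), I would produce a Reedy cofibrant homotopical diagram $D[n]\times\widehat{[1]}\to\cat{D}_0$ restricting to $F_*(B(X,f))$ and $B(F_*(X,f))$ at the two endpoints. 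Such a diagram is precisely an edge in the mapping space exhibiting a simplicial homotopy between the two $n$-simplices of $\fNerve(\cat{D}_0)$; checking that these homotopies are themselves compatible with the simplicial structure maps (so as to assemble into a genuine simplicial homotopy $\dgNerve(\cat{C})\times\Delta[1]\to\fNerve(\cat{D}_0)$) uses naturality of all the constructions involved, exactly as in Proposition~\ref{prop:b-functor}.

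The main obstacle I expect is the bookkeeping in this last step: one must choose the Reedy cofibrant fillers coherently across all simplices $(X,f)$ simultaneously, i.e.\ produce a single map $\dgNerve(\cat{C})\times\Delta[1]\to\fNerve(\cat{D}_0)$ rather than an unrelated homotopy for each simplex. This is handled by performing the extension argument once, on the universal example, using that the lifting in Lemma~\ref{lem:sd-lift} is natural in the sieve and that $D(-)$ and $\Sd(-)$ are functorial, so the fillers are forced by their restrictions to the sieve and hence automatically respect the simplicial action. Everything else — the identification of the two $\cat{D}$-modules, the compatibility of differentials — is a direct unwinding of the fact that $F$ is a dg-functor and therefore commutes with $m$, with $\circ$, and with the formation of the coherence data $f(\seq{\cdot})$.
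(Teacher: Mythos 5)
There are two genuine gaps here. First, your opening identification is too strong. The composite $F_*\circ B$ sends $(X,f)$ to the diagram $\alpha\mapsto F(B(X,f)(\alpha))$; there is no ``chosen representative'' on that side, just $F$ applied to an object of $\cat{C}$. The real question is whether $\Map{\cat{D}}(F(B(X,f)(\alpha)),-)$ is isomorphic --- let alone ``literally equal'' --- to $C(F_*(X,f))(\alpha)$, and your justification (``$F$ sends the short exact sequences~(\ref{eqn:ses-resolution-sieve}) to the corresponding ones in $\cat{D}$'') is exactly the assertion that $F$ preserves the shifts $X[k]$ and the iterated extensions out of which $B(X,f)(\alpha)$ is built. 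A dg-functor does this at best up to isomorphism, and establishing even that requires an argument you do not give. The paper sidesteps the issue entirely: it constructs an explicit comparison map $\eta(\alpha)\colon B(F_*(X,f))(\alpha)\to FB(X,f)(\alpha)$ by applying $F$ to the family of maps representing $\id_{B(X,f)(\alpha)}$, and then shows only that $\eta$ is a natural \emph{weak equivalence}, using that $\eta(\seq{i})$ is an isomorphism on vertices together with 2-out-of-3 and Proposition~\ref{prop:b-homotopical}.

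Second, and more seriously, your mechanism for making the homotopies coherent across all simplices does not work. The lifts supplied by Lemma~\ref{lem:sd-lift} and the modifications of Lemma~\ref{lem:reedy-modification} are not unique, so they are not ``forced by their restrictions to the sieve,'' and there is no universal example on which to perform the construction once: choosing lifts independently for each $(X,f)$ gives no compatibility with the simplicial operators, which is precisely what a map $\dgNerve(\cat{C})\times\Delta[1]\to\fNerve(\cat{D}_0)$ demands. The paper's solution is the key idea you are missing: each $\eta(\alpha)$ is a $1$-simplex of $\dgNerve(\cat{D})$, so one can feed it back into $B$ itself to obtain the canonical resolution $B(\eta(\alpha))\colon D[1]\to\cat{D}_0$. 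Because $B$ is a simplicial map (Proposition~\ref{prop:b-functor}), these resolutions are automatically compatible in $\alpha$ and in $(X,f)$, assembling with no arbitrary choices into a map $\dgNerve(\cat{C})\to\fNerve((\cat{D}_0)^{D[1]}_{\mathrm{R}})$, whence the simplicial homotopy. Your levelwise-comparison-then-glue outline is the right shape, but both the comparison and the gluing need these repairs.
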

\begin{proof}
  Let $(X, f)$ be an $n$-simplex of $\dgNerve(\cat{C})$. We have
  \begin{align}
    &\ \Map{\cat{D}}(B(F_*(X, f))(\alpha), FB(X, f)(\alpha)) \nonumber \\
    \cong&\ C(F_*(X, f))(\alpha)(FB(X, f)(\alpha)) \nonumber \\
    \cong&\ \prod_{i \in D[n] \downarrow \alpha} \Map{\cat{D}}(FX_{(\alpha \circ i)(0)}, F
    B(X, f)(\alpha)) \label{eqn:naturality-repr-f}
  \end{align}
  with differential $d_{F(f)}$, natural in $\alpha \in D[n]$. From the representation
  of the identity map of $B(X, f)(\alpha)$ we get maps $X_{(\alpha \circ i)(0)} \to
  B(X, f)(\alpha)$ for all $i \in D[n] \downarrow \alpha$. We apply the functor $F$
  and obtain an element of (\ref{eqn:naturality-repr-f}), representing a map
  \[ \eta(\alpha) : B(F_*(X, f))(\alpha) \to FB(X, f)(\alpha). \]
  These piece together into a natural transformation
  \[\eta : BF_*(X, f) \to FB(X, f).\]
  For any $\alpha \in D[n]$, we thus have a $1$-simplex $\eta(\alpha)$ of
  $\dgNerve(\cat{D})$, which has a homotopical Reedy cofibrant resolution $B(\eta(\alpha))
  : D[1] \to \cat{D}_0$. Since $B$ constructs compatible resolutions,
  we get a homotopical diagram $D[1] \times D[n] \to \cat{D}_0$, which is Reedy
  cofibrant by an analogous argument as in the proof of
  Proposition~\ref{prop:b-representable-reedy}.
  We then get a homotopical Reedy cofibrant diagram $D[n] \to (\cat{D}_0)_\text{R}^{D[1]}$. Again by the compatibility of the resolutions created by $B$,
  these maps piece together into a map of simplicial sets
  \[\dgNerve(\cat{C}) \longrightarrow \fNerve((\cat{D}_0)_\text{R}^{D[1]}).\]
  By composing this map with the canonical map $\fNerve((\cat{D}_0)_\text{R}^{D[1]}) \to
      \fNerve(\cat{D}_0)^{\Delta[1]}$, we obtain the simplicial homotopy. 
\end{proof}

The maps $\eta(\seq{i})$ for any $i \in [n]$ that occur in the proof above are
isomorphisms, and so by $2$-out-of-$3$ we have that $\eta$ is a natural weak equivalence.
By carefully choosing compatible homotopy inverses and resolutions, it is likely to
be possible to extend the simplicial homotopy to an $E[1]$-homotopy.

\section{Equivalence of Quasicategories}\label{sec:equivalence}

In this last section we will conclude the construction by showing that the
functor $B$ defines an equivalence of quasicategories. Just as in ordinary
category theory, we have to show that $B$ is essentially surjective and
fully faithful~\cite[Definition 1.2.10.1]{higher-topos-theory}.

\begin{prop}\label{prop:b-essentially-surjective}
  Let $\cat{C}$ be a pretriangulated dg-category. Then
  $B : \dgNerve(\cat{C}) \to \fNerve(\cat{C}_0)$
  is essentially surjective as a functor of quasicategories.
\end{prop}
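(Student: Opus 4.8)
The plan is to show that every $0$-simplex of $\fNerve(\cat{C}_0)$ is equivalent to one in the image of $B$, and then upgrade this to essential surjectivity for the full quasicategory. Since a $0$-simplex of $\fNerve(\cat{C}_0)$ is a homotopical Reedy cofibrant diagram $D[0] \to \cat{C}_0$, i.e.\ a cosemisimplicial resolution of some object $Y \in \cat{C}$, I first observe that $B(Y, f)$ (where $(Y,f)$ is the $0$-simplex of $\dgNerve(\cat{C})$ corresponding to the object $Y$) is itself a homotopical Reedy cofibrant diagram $D[0] \to \cat{C}_0$ whose value at $\seq{0}$ is $Y$ (this follows from Proposition~\ref{prop:b-representable-reedy}, Proposition~\ref{prop:b-homotopical}, and the example computing $C(Y,f)(\alpha)$). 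Then Lemma~\ref{lem:frames-equivalent} immediately gives that the given resolution and $B(Y, f)$ are equivalent as $0$-simplices of $\fNerve(\cat{C}_0)$.

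For essential surjectivity of a functor of quasicategories in the sense of~\cite[Definition 1.2.10.1]{higher-topos-theory}, it suffices to check that the induced functor on homotopy categories $\Ho(\dgNerve(\cat{C})) \to \Ho(\fNerve(\cat{C}_0))$ is essentially surjective, and this is exactly a statement about objects up to equivalence, i.e.\ about $0$-simplices up to the equivalence relation generated by $1$-simplices that are equivalences. So the previous paragraph already does the work: every object of $\fNerve(\cat{C}_0)$, being a homotopical Reedy cofibrant cosemisimplicial resolution of some $Y$, is equivalent to $B$ applied to the corresponding $0$-simplex $(Y, f)$ of the dg-nerve.

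The main step to get right is the identification of $B(Y,f)$ for a $0$-simplex $(Y,f)$: one must confirm that $B(Y, f) : D[0] \to \cat{C}_0$ is genuinely a valid $0$-simplex of $\fNerve(\cat{C}_0)$ and that $B(Y,f)(\seq{0}) \cong Y$. The latter is the content of the example showing $C(X,f)(\seq{i}) \cong \Map{\cat{C}}(X_i, -)$, so $B(Y,f)(\seq 0) \cong Y$, and the former is Propositions~\ref{prop:b-representable-reedy} and~\ref{prop:b-homotopical} specialised to $n = 0$. I expect the only subtlety to be purely bookkeeping: relating the object underlying a $0$-simplex of $\fNerve(\cat{C}_0)$ to an object of $\cat{C}$ via the forgetful functor $D[0] \to [0] \cong \ast$, and then invoking Lemma~\ref{lem:frames-equivalent} with $X = $ the given resolution and $Y = B(Y, f)$, both resolving weakly equivalent (in fact isomorphic) objects. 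There is no hard calculation here — the difficulty already lives in the earlier sections that constructed $B$.
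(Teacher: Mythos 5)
Your proposal is correct and follows exactly the paper's argument: given a homotopical Reedy cofibrant resolution $X : D[0] \to \cat{C}_0$, form $B(X(\seq{0}))$, observe via Propositions~\ref{prop:b-representable-reedy} and~\ref{prop:b-homotopical} and the computation $B(X,f)(\seq{i}) \cong X_i$ that it is a valid $0$-simplex resolving the same object, and conclude by Lemma~\ref{lem:frames-equivalent} together with the reduction of essential surjectivity to the homotopy category. No discrepancies.
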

\begin{proof}
  Let $X : D[0] \to \cat{C}_0$ be a homotopical Reedy cofibrant functor
  representing an object of the homotopy category of $\fNerve(\cat{C})$.
  Let $Y := B(X(\seq{0})) $, then 
  $Y: D[0] \to \cat{C}_0$ is Reedy cofibrant by Proposition~\ref{prop:b-representable-reedy}
  and homotopical by Proposition~\ref{prop:b-homotopical} and we have
  we have $Y(\seq{0}) \cong X(\seq{0})$. Thus by Lemma~\ref{lem:frames-equivalent},
  $X$ and $Y$ are equivalent $0$-simplices, and thus isomorphic in the homotopy
  category. Hence $B$ induces an essentially surjective functor on the homotopy
  categories, so by definition it is essentially surjective as a functor of
  quasicategories.
\end{proof}

It remains to show that $B$ is fully faithful. We will use the auxiliary lemma:

\begin{lem}[{\cite[Lemma 4.3]{szumilo-frames}}]\label{lem:weak-equiv-simplicial-sets}
  Let $f : K \to L$ be a map of simplicial sets. Suppose that for each
  $n \in \N$ and a square:
  \[
    \begin{tikzcd}
      \partial \Delta[n] \ar{r}{u} \ar[hookrightarrow]{d} &
      K \ar{d}{f} \\
      \Delta[n] \ar{r}{v} & L
    \end{tikzcd}
  \]
  there are: a map $w: \Delta[n] \to K$ such that $w\restriction_{\partial
    \Delta[n]} = u$ and a homotopy from $f \circ w$ to $v$ relative to the
  boundary. Then $f$ is a weak homotopy equivalence.
\end{lem}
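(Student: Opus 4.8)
The plan is to reduce to the case where $K$ and $L$ are Kan complexes, to extract from the hypothesis that $f$ is bijective on components and induces isomorphisms on all homotopy groups, and then to conclude by the Whitehead theorem.

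First I would replace $f$ by a fibrant resolution. Choose anodyne maps $K \to \tilde{K}$ and $L \to \tilde{L}$ into Kan complexes together with a lift $\tilde{f} : \tilde{K} \to \tilde{L}$ making the evident square commute (possible since anodyne maps lift against fibrant objects). By two-out-of-three for weak equivalences it suffices to show that $\tilde{f}$ is a weak equivalence, and the crucial point is that $\tilde{f}$ again satisfies the hypothesis: a square $(\partial\Delta[n] \to \tilde{K},\ \Delta[n] \to \tilde{L})$ factors, after applying a suitable power of $\operatorname{Ex}$ and passing to adjoints, through a square against $f$ over a subdivided boundary inclusion $\Sd^r \partial\Delta[n] \hookrightarrow \Sd^r \Delta[n]$, and one checks that the weak lifting property against ordinary boundary inclusions propagates to these along the skeletal filtration of $\Sd^r\Delta[n]$. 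This propagation is where fibrancy of $\tilde{L}$ enters (it is needed to rectify the intermediate homotopies when composing lifts), and it is the main obstacle of the proof.

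Assume now that $K$ and $L$ are Kan complexes, so that homotopy groups are represented on the nose by simplices with constant boundary. For surjectivity of $f_* : \pi_n(K, x) \to \pi_n(L, f(x))$ with $n \geq 1$, represent a class by $v : \Delta[n] \to L$ with $v$ constant at $f(x)$ on $\partial\Delta[n]$, let $u : \partial\Delta[n] \to K$ be constant at $x$, and apply the hypothesis to the commuting square $(u, v)$: the resulting $w : \Delta[n] \to K$ is constant at $x$ on the boundary with $f \circ w$ homotopic to $v$ relative to $\partial\Delta[n]$, so $[w] \mapsto [v]$. For injectivity, a class in the kernel is represented by some $w : \Delta[n] \to K$ such that $f \circ w$ admits a nullhomotopy relative to $\partial\Delta[n]$, i.e.\ an $(n+1)$-simplex $v : \Delta[n+1] \to L$ with one face $f \circ w$ and the remaining faces degenerate at $f(x)$; taking $u : \partial\Delta[n+1] \to K$ to be $w$ on that face and degenerate elsewhere, the hypothesis yields $w' : \Delta[n+1] \to K$ exhibiting $[w] = 0$. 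The assertions on $\pi_0$ are the same argument with $\partial\Delta[0] = \emptyset$ (surjectivity) and $\partial\Delta[1]$ the two vertices (injectivity).

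A map of simplicial sets that is bijective on $\pi_0$ and an isomorphism on $\pi_n$ at every basepoint for all $n \geq 1$ is a weak homotopy equivalence (the simplicial Whitehead theorem, equivalently Whitehead's theorem applied to $|f| : |K| \to |L|$). Hence $\tilde{f}$, and therefore $f$, is a weak homotopy equivalence. As indicated, the only step requiring genuine work is the reduction to Kan complexes — transporting the weak lifting property across the fibrant replacement and keeping track of basepoints — with everything downstream a routine unwinding of definitions.
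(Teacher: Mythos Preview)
The paper does not prove this lemma; it is quoted from \cite[Lemma~4.3]{szumilo-frames} and used as a black box in the proof of Proposition~\ref{prop:b-fully-faithful}. There is therefore no proof here to compare your argument against, so let me comment on the argument itself.

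Your treatment of the Kan case is correct and standard. The gap is in the reduction. You assert that the weak lifting property against $\partial\Delta[n] \hookrightarrow \Delta[n]$ propagates to $\Sd^r\partial\Delta[n] \hookrightarrow \Sd^r\Delta[n]$ along the skeletal filtration, with ``fibrancy of $\tilde L$'' rectifying the intermediate homotopies. But at the moment you attach a new cell $\Delta[m]$ you are still solving a lifting problem against $f : K \to L$, not against $\tilde f$. Concretely: suppose you have already built a partial lift $w$ on an intermediate subcomplex $A \supseteq \Sd^r\partial\Delta[n]$ together with a homotopy $H : f\circ w \simeq v|_A$ rel $\Sd^r\partial\Delta[n]$. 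For the next cell, the square you want to feed to the hypothesis has vertical sides $f\circ w|_{\partial\Delta[m]}$ and $v|_{\partial\Delta[m]}$, and these agree only up to $H$, not on the nose --- the hypothesis does not apply. Straightening this requires extending the map $H|_{\partial\Delta[m]\times\Delta[1]} \cup v|_{\Delta[m]\times\{1\}}$ over $\Delta[m]\times\Delta[1]$, which is an anodyne extension problem in $L$, not in $\tilde L$; and $L$ is not Kan. Pushing the problem into $\tilde L$ does not help, because once the modified target simplex lives only in $\tilde L$ the hypothesis on $f$ no longer applies to it. (There is also a secondary wrinkle: even granting a weak lift for the subdivided square against $f$, transporting the relative homotopy back through the adjunction $\Sd^r \dashv \operatorname{Ex}^r$ is not automatic, since $\Sd$ does not commute with $-\times\Delta[1]$.)

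So the outline is reasonable but the step you flag as ``the main obstacle'' is not actually carried out, and the obstacle is real: the inductive gluing of weak lifts does not go through by a routine cell-by-cell argument. The lemma is true, but a complete proof has to organise the induction differently or avoid it altogether.
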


Given the mapping cylinder for some unknown map $f : A \to B$ of chain
complexes, $f$ can be recovered (up to homotopy) by composing the
inclusion $A \to \Cyl(f)$ with a homotopy inverse of 
the inclusion $B \to \Cyl(f)$. Based on this idea, we can prove:

\begin{prop}\label{prop:b-fully-faithful}
  Let $\cat{C}$ be a pretriangulated dg-category. Then 
  \[B: \dgNerve(\cat{C}) \to \fNerve(\cat{C}_0)\]
  is fully faithful as a functor of quasicategories.
\end{prop}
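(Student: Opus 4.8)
The plan is to verify the criterion of Lemma~\ref{lem:weak-equiv-simplicial-sets} for the map of mapping spaces that $B$ induces between right-morphism spaces, which by~\cite[Definition 1.2.10.1]{higher-topos-theory} together with essential surjectivity (Proposition~\ref{prop:b-essentially-surjective}) will give full faithfulness. Concretely, fix objects $a, b$ of $\dgNerve(\cat{C})$ and consider the induced map $\MapR{\dgNerve(\cat{C})}(a, b) \to \MapR{\fNerve(\cat{C}_0)}(Ba, Bb)$. An $n$-simplex of the source is essentially an $(n+1)$-simplex of $\dgNerve(\cat{C})$ with a prescribed final vertex, i.e.\ an $(n+1)$-simplex $(X, f)$ of the dg-nerve whose last object is $b$ and whose restriction to $[n] = \{0,\dots,n\}$ is a given simplex over $a$; likewise on the target side we have Reedy cofibrant diagrams $D[n+1] \to \cat{C}_0$ with prescribed behaviour over the last vertex. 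So given a lifting square against $\partial\Delta[n]$, I would unpack it as: a diagram $Z : D[n+1] \to \cat{C}_0$ (the image $v$) together with an $(n+1)$-simplex of the dg-nerve defined only over $\partial\Delta[n] \star \Delta\{n+1\}$ (the boundary data $u$), compatible under $B$.

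The key construction is the chain-complex intuition spelled out just before the statement: from the resolution $Z$ we want to extract the missing coherence maps of a dg-nerve simplex by composing the inclusion $X_{i_0} \simeq Z(\seq{i_0}) \to Z(\seq{i_0,\dots,i_k, n+1})$ with a homotopy inverse to the acyclic cofibration $b \simeq Z(\seq{n+1}) \to Z(\seq{i_0,\dots,i_k,n+1})$, the latter existing by Lemma~\ref{lem:acyclic-cofibration-retraction}. The steps, in order: (1) for each $\alpha \in D[n+1]$ that contains the maximal vertex, pick a retraction $p_\alpha$ and nullhomotopy of the last-vertex inclusion, coherently in $\alpha$ — this is exactly the structure Lemma~\ref{lem:last-inclusion-equivalence} shows exists for the specific resolutions $B(X,f)$, but here $Z$ is an arbitrary resolution, so I would instead use Lemma~\ref{lem:acyclic-cofibration-retraction} together with the sieve-extension Lemmas~\ref{lem:reedy-modification} and~\ref{lem:sd-lift} to organise these choices into a Reedy cofibrant diagram on $D(\widehat{[1]} \times \dots)$; (2) read off from these the maps $f(\seq{i_0,\dots,i_k,n+1})$ of a candidate dg-nerve simplex $w$, checking the Maurer-Cartan / differential coherence condition of Definition~\ref{def:dg-nerve} — this is where the nullhomotopies enter, since the defect in coherence is measured by $d$ of the chosen homotopies; (3) arrange that $w$ restricts to $u$ on the boundary, which requires that in step~(1) the choices on the boundary faces be taken to agree with those already implicit in $u$ — feasible because $\partial\Delta[n] \star \Delta\{n+1\} \hookrightarrow \Delta[n]\star\Delta\{n+1\}$ is (the $D$ of) a sieve, so the extension lemmas apply; (4) produce the homotopy from $Bw$ to $v = Z$ relative to the boundary, by using that $Bw$ and $Z$ are two Reedy cofibrant resolutions agreeing on the boundary sieve and levelwise weakly equivalent (the comparison maps $\eta(\alpha)$ are built as in the naturality lemma, and are weak equivalences by the last-vertex argument of Proposition~\ref{prop:b-homotopical}), then invoking Lemma~\ref{lem:sd-lift} in the cofibration category $(\cat{C}_0)^{D[1]}_{\mathrm R}$ exactly as in the proof of Lemma~\ref{lem:frames-equivalent} to extend the comparison to a $D[1]$-worth of resolutions, i.e.\ a simplicial homotopy rel boundary.

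The main obstacle is step~(2) combined with the coherence bookkeeping of step~(1): extracting genuine dg-nerve data (a twisted cochain satisfying $df + f*f = 0$) from a resolution requires choosing the retractions and homotopies of Lemma~\ref{lem:acyclic-cofibration-retraction} in a way that is strictly compatible across all faces, not merely up to homotopy, and verifying that the resulting maps satisfy the differential coherence condition rather than a weakened-up-to-higher-homotopy version. I expect to handle this by not choosing the $p_\alpha$ naively but rather by packaging the whole problem as a single Reedy-cofibrant lifting problem: the desired $w$ together with its resolution $Bw$ and the homotopy to $Z$ should all be produced simultaneously as a lift in $\cat{C}_0^{DL}$ for an appropriate $L$, using Lemma~\ref{lem:sd-lift}, so that the coherence is automatic from Reedy cofibrancy and the only genuinely computational input is the identification — already essentially carried out in Lemma~\ref{lem:last-inclusion-equivalence} and the examples following Proposition~\ref{prop:b-representable-reedy} — of $B$ applied to such a simplex with the mapping-cylinder-type object built from the $f(\seq{\dots,n+1})$. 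In other words, the hard part is reducing "recover $f$ from its cylinder" to a lifting statement that the machinery of Sections~\ref{sec:cofibration-categories}--\ref{sec:functor} already supplies.
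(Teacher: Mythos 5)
Your overall skeleton matches the paper's: verify the criterion of Lemma~\ref{lem:weak-equiv-simplicial-sets} for $B_*$ on right mapping spaces, unpack $u$ as dg-nerve data on $\Delta[n]\cup(\partial\Delta[n]\star\Delta\{n+1\})$ and $v$ as a resolution $V : D[n+1]\to\cat{C}_0$, recover the missing coherence by composing a first-vertex inclusion with a homotopy inverse of the last-vertex inclusion, and finish with the sieve-lifting lemmas. But the obstacle you single out --- coherently choosing retractions $p_\alpha$ for every $\alpha$ containing the top vertex --- is not the actual difficulty: the boundary $u$ already supplies genuine dg-nerve data on all proper faces, so the only missing datum is the single top coherence chain $\hat f([n+1])\in{\Map{\cat{C}}(X,Y)}_n$, and correspondingly only one retraction is needed, namely for the single acyclic cofibration $\iota : B(U)(\partial\Delta[n]\star\Delta\{n+1\})\to V(\Delta[n+1])$; the paper obtains it as $p_1 = p_2\circ p$, with $p_2$ the explicit retraction of Lemma~\ref{lem:last-inclusion-equivalence} (available because the source of $\iota$ is a $B$-resolution) and $p$ supplied by Lemma~\ref{lem:acyclic-cofibration-retraction}.

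The genuine gap is the hope that the remaining verification can be made ``automatic from Reedy cofibrancy'' by packaging everything as one lifting problem. The lifting lemmas only produce objects and $0$-cycles of $\cat{C}_0$; they cannot produce a degree-$n$ element of the mapping complex satisfying the exact identity $d\hat f([n+1]) = -f(d[n+1]) - (f*f)([n+1])$, which is not a property visible in the homotopy category. The paper must construct $\hat f([n+1])$ explicitly, as minus the composite of $\repr{p_1}$ and $\repr{b_1}$ with an explicit degree-$n$ map $a_1$ out of $C(U)(\Delta[n])$, and then verify Maurer--Cartan by computing $da_1$ against a companion map $a_2$ and chasing signs; this computation is irreducible. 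The same issue recurs in your step (4): Lemma~\ref{lem:sd-lift} requires the comparison $B(\hat U)\to V$ to already be defined on all of $\Sd[n+1]$, including the top object, and there is no canonical such map ($V(\Delta[n+1])$ is not a $B$-resolution, so neither Lemma~\ref{lem:last-inclusion-equivalence} nor the $\eta(\alpha)$ of the naturality lemma apply); the paper builds it as a $d_{\hat f}$-cycle $\varphi$ with $\varphi([n+1]) = a_1\circ\repr{b_1}\circ\repr{H_1}$, where $H_1 = \iota\circ H_2\circ p + h$, and again checks the cycle condition by hand. Without these two explicit chain-level constructions the argument does not close.
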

\begin{proof}
  $B$ is fully faithful if it induces a fully faithful functor of homotopy
  categories enriched in the homotopy category of spaces. In particular,
  this is the case when $B$ induces a weak equivalence of simplicial sets
  \[ B_* : \MapR{\dgNerve(\cat{C})}(X, Y) \longrightarrow \MapR{\fNerve(\cat{C}_0)}(B(X), B(Y)) \]
  for all objects $X, Y$, which we prove using Lemma~\ref{lem:weak-equiv-simplicial-sets}:
  Consider a diagram 
  \[
    \begin{tikzcd}
      \partial \Delta[n] \ar{r}{u} \ar[hookrightarrow]{d} &
      \MapR{\dgNerve(\cat{C})}(X, Y) \ar{d}{B_*} \\
      \Delta[n] \ar{r}{v} &
      \MapR{\fNerve(\cat{C}_0)}(B(X), B(Y)).
    \end{tikzcd}
  \]

  To deal with the partially defined objects arising in this proof, we
  implicitly take the pointwise Kan extension of simplicial objects to finite
  simplicial sets. The neccessary colimits exist in $\cat{C}_0$ because of
  Reedy cofibrancy and the closure of $\cat{C}_0$ under pushouts along cofibrations.

  By unpacking the definition of the mapping complex, $u$ corresponds to a map
  \[ \partial \Delta[n] \star \Delta\{ n + 1 \} \longrightarrow \dgNerve(\cat{C}) \]
  which sends $\partial \Delta[n]$ constantly to $X$ and $\Delta\{ n + 1 \}$
  constantly to $Y$. By letting the additional coherence map be zero, we can
  trivially extend this to a map
  \[ U : \Delta[n] \cup (\partial \Delta[n] \star \Delta\{ n + 1 \}) \longrightarrow \dgNerve(\cat{C}), \]
  which is compatible by strict unitality with all extensions that send
  $\Delta[n]$ constantly to $X$ and so get a graded map
  \[ f : \PathAlg(\Delta[n] \cup (\partial \Delta[n] \star \Delta\{ n + 1 \})) \longrightarrow \cat{C} \]
  of degree $-1$.
  On the other hand, we obtain from $v$ a homotopical Reedy cofibrant functor
  \[ V : D([n + 1]) \longrightarrow \cat{C}_0. \]
  By the definition of the mapping space and commutativity of the square we have
  \begin{align*}
    V \restriction D(\Delta[n]) &= B(X) \circ D([n] \to [0]), \\
    V \restriction D(\Delta\{ n + 1 \}) &= B(Y) \circ D(\{ n + 1 \} \to [0]), \\
    V \restriction D(\Delta[n] \cup \partial \Delta[n] \star \Delta\{ n + 1 \}) &= B(U).
  \end{align*}
  In particular, $V$ induces a commutative diagram
  \[
    \begin{tikzcd}
      B(U)(\Delta[n]) \ar{r}{b_1} & V(\Delta[n + 1]) & \ar[swap]{l}{i_1} \ar{dl}{i_2} Y \\
      B(U)(\partial \Delta[n]) \ar[swap]{r}{b_2} \ar{u} & B(U)(\partial \Delta[n] \star \Delta\{ n + 1 \}) \ar{u}{\iota}.
    \end{tikzcd}
  \]
  Let $p_2$ be the homotopy inverse to
  $i_2$ and $H_2 : i_2 \circ p_2 \simeq \id{}$ the homotopy
  that were constructed in Section~\ref{sec:homotopical}.
  $\iota$ is a weak equivalence, because $i_1$ and $i_2$ are weak equivalences, 
  and it is a cofibration by Reedy-cofibrancy of $V$.
  Thus by Lemma~\ref{lem:acyclic-cofibration-retraction} there exists a
  map $p : B \to A$ such that $p \circ \iota = \id$ and a homotopy
  $h : \iota \circ p \simeq \id$ such that $h \circ \iota = 0$. Let
  \[ p_1 = p_2 \circ p, \qquad H_1 = \iota \circ H_2 \circ p + h, \]
  then $H_1 : i_1 \circ p_1 \simeq \id$ and the following diagrams commute:
  \[
    \begin{tikzcd}[column sep = 0.2cm]
      V(\Delta[n + 1]) \ar{r}{p_1} & Y \\
      B(U)(\partial \Delta[n] \star \Delta\{ n + 1 \}) \ar{u}{\iota} \ar[swap]{ur}{p_2}
    \end{tikzcd}
    \;
    \begin{tikzcd}[column sep = 0.2cm]
      V(\Delta[n + 1]) \ar{r}{H_1} & V(\Delta[n + 1]) \\
      B(U)(\partial \Delta[n] \star \Delta\{ n + 1 \}) \ar{u}{\iota} \ar[swap]{r}{H_2} &
      B(U)(\partial \Delta[n] \star \Delta\{ n + 1 \}) \ar{u}{\iota}
    \end{tikzcd}
  \]

  To fill in the interior of $u$, we seek an extension of $f$ to a map
  \[ \hat{f} : \PathAlg(\Delta[n + 1]) \longrightarrow \cat{C}. \]
  The only injective argument $[k] \to [n + 1]$ missing from the domain of $f$
  (and thus the only one whose value is not forced by strict unitality) is the
  identity $[n + 1] \to [n + 1]$, so we seek an element
  $f([n + 1]) \in {\Map{\cat{C}}(X, Y)}_n$. To ensure that
  the extension satisfies the Maurer-Cartan condition,
  the map $f([n + 1])$ must the chosen such that
  \begin{equation}
    \label{eqn:extension-maurer-cartan}
    d(f([n + 1])) = - f(d[n + 1]) - (f * f)([n + 1]).
  \end{equation}

  Consider the diagram of $\cat{C}$-modules
  \[ 
    \begin{tikzcd}[row sep = 1cm, column sep = 0.8cm]
      \Map{\cat{C}}(X, -) & C(U)(\Delta[n]) \ar[swap]{l}{a_1} \ar{d} \ar{dl}{da_1} &
      \Map{\cat{C}}(V(\Delta[n + 1]), -) \ar[swap]{l}{\repr{b_1}} \ar{d}{\repr{\iota}} &
      \Map{\cat{C}}(Y, -) \ar[swap]{l}{\repr{p_1}} \ar{dl}{\repr{p_2}} \\
      \Map{\cat{C}}(X, -) & C(U)(\partial \Delta[n]) \ar{l}{a_2} &
      C(U)(\partial \Delta[n] \cup \Delta\{ n + 1 \}) \ar{l}{\repr{b_2}}
    \end{tikzcd}
  \]
  where $\repr{}$ is the Yoneda embedding and
  $a_1, a_2$ are the maps of degree $n$ and $n - 1$, respectively, 
  which are defined by the following terms:
  \begin{align*}
    a_1(\varphi) &= \sign{|\varphi|n} \varphi([n]), \\
    a_2(\varphi) &= \sign{|\varphi|(n - 1)} \sum_{i = 1}^n \sign{i} \varphi([n] - i) - \sign{|\varphi|(n - 1)} \varphi([n] - 0).
  \end{align*}
  Observe that $a_1, a_2$ are chosen such that the differential of the composition
  of the top row is the composition of the bottom row of the diagram:
  \begin{align*}
    d(a_1 \circ \repr{j_1} \circ \repr{p_1})
    = da_1 \circ \repr{j_1} \circ \repr{p_1}
    = a_2 \circ \repr{j_2} \circ \repr{p_2}.
  \end{align*}
  We can then evaluate the composition of the bottom row for some $\omega \in
  \Map{\cat{C}}(Y, A)$:
  \begin{align*}
    &\ (a_2 \circ \repr{j_2} \circ \repr{p_2})(\omega) \\
    =&\ \sign{(|\omega| + n - 1)(n - 1)} \left( \sum_{i = 1}^n \sign{i} \repr{p_2}([n] - i)(\omega) 
    - \repr{p_2}([n] - 0)(\omega) \right) \\
    =&\ \sign{(|\omega| + n)(n - 1)} \left(
      \sum_{i = 1}^n \sign{i} \omega \circ f([n + 1] - i)
    - \omega \circ f([n + 1] - 0) \right)
    \\
    =&\ \sign{(n + 1)|\omega|} \left(\sum_{i = 1}^n \sign{i} \omega \circ f([n + 1] - i) +
      \omega \circ f([n + 1] - 0) \circ f(\seq{0, 1}) \right)  \\
    =&\ \sign{n}\repr{(f(d[n + 1]) + (f * f)([n + 1]))}(\omega).
  \end{align*}

  Hence the map $\hat{f}([n + 1])$ represented by $- a_1 \circ
  \repr{b_1} \circ \repr{p_1}$ satisfies (\ref{eqn:extension-maurer-cartan}).
  This induces an extension $\hat{u}$ of $u$ that fills in the interior,
  and thus also extends $U$ to
  \[ \hat{U} : \PathAlg(\Delta[n + 1]) \longrightarrow \dgNerve(\cat{C}). \]

  It remains to show that there is a simplicial homotopy from $\hat{u}$ to
  $v$, relative to the boundary. As the essential ingedient for this homotopy,
  we construct a map 
  \[ F([n + 1]) : B(\hat{U})(\Delta[n + 1]) \longrightarrow V(\Delta[n + 1]) \]
  in $\cat{C}_0$. Since $B(\hat{U})(\Delta[n + 1])$ represents $C(\hat{U})(\Delta[n + 1])$,
  such a map amounts to an element
  \[ \varphi \in C(\hat{U})(\Delta[n + 1])(V(\Delta[n + 1])) \]
  of degree zero such that $d_{\hat{f}} \varphi = 0$. Restricted to the proper
  subsets of $[n + 1]$, we let $\varphi$ be determined by the element representing
  the map
  \[ \hat{\iota} : C(U)(\Delta[n] \cup \partial \Delta[n] \star \Delta\{ n + 1 \}) \longrightarrow V(\Delta[n + 1]), \]
  and construct the missing $\varphi([n + 1])$ as follows:
  Consider the diagram of $\cat{C}$-modules
  \[ 
    \begin{tikzcd}[row sep = 1cm, column sep = 0.6cm]
      \Map{\cat{C}}(X, -) & C(U)(\Delta[n]) \ar[swap]{l}{a_1} \ar{d} \ar{dl}{da_1} &
      \Map{\cat{C}}(V(\Delta[n + 1]), -) \ar[swap]{l}{\repr{b_1}} \ar{d}{\repr{\iota}} &
      \Map{\cat{C}}(V(\Delta[n + 1]), -) \ar[swap]{l}{\repr{H_1}} \ar{d}{\repr{\iota}} \\
      \Map{\cat{C}}(X, -) & C(U)(\partial \Delta[n]) \ar{l}{a_2} &
      C(U)(\partial \Delta[n] \star \Delta\{ n + 1 \}) \ar{l}{\repr{b_2}} &
      C(U)(\partial \Delta[n] \star \Delta\{ n + 1 \}) \ar{l}{\repr{H_2}}
    \end{tikzcd}
  \]
  where $a_1, a_2$ are defined as above and calculate the differential of the top row:
  \begin{align*}
    d(a_1 \circ \repr{b_1} \circ \repr{H_1})
    &= a_2 \circ \repr{b_2} \circ \repr{H_2} \circ \repr{\iota} \\
    &+ \sign{n} a_1 \circ \repr{b_1} \circ \repr{p_1} \circ \repr{i_1} \\
    &- \sign{n} a_1 \circ \repr{b_1}.
  \end{align*}
  We evaluate this for some $\omega \in \Map{\cat{C}}(V(\Delta[n + 1]), A)$ and get
  \begin{align}
    d(a_1 \circ \repr{b_1} \circ \repr{H_1})(\omega)
    &= \sign{n|\omega|}\sum_{i = 1}^n \sign{i} \repr{\iota}(\omega)([n + 1] - i)
    \label{eqn:ff-map-1} \\
    & - \sign{n|\omega|} \repr{\iota}(\omega)([n + 1] - 0)
    \label{eqn:ff-map-2} \\
    & + \sign{n|\omega| + n + 1} \repr{i_1}(\omega) \circ \hat{f}([n + 1])
    \label{eqn:ff-map-3} \\
    & + \sign{n|\omega| + n + 1} \repr{b_1}(\omega)([n]).
    \label{eqn:ff-map-4}
  \end{align}

  Then (\ref{eqn:ff-map-1}) and (\ref{eqn:ff-map-4}) together form the sum
  \[ \sum_{i = 1}^{n + 1} \sign{i} \omega \circ \varphi([n + 1] - i). \]
  The terms (\ref{eqn:ff-map-2}) and (\ref{eqn:ff-map-3}) are the only non-vanishing summands of
  \[ \sum_{i = 1}^{n + 1} \sign{ni + n + 1} \omega \circ \varphi(\seq{i, \ldots, n + 1}) \circ \hat{f}(\seq{0, \ldots, i}). \]

  Let $\varphi([n + 1])$ be represented by $a_1 \circ \repr{b_1} \circ \repr{H_1}$,
  then we have shown that
  \[ d(\varphi([n + 1])) = \varphi(d^* [n + 1]) + (\varphi * \hat{f})([n + 1]) \]
  and thus $d_{\hat{f}} \varphi = d\varphi - \varphi * \hat{f} = 0$.
  
  Now we have
  constructed a well-defined map $F([n + 1])$ that is compatible with the inclusion
  maps for proper subsets of $[n + 1]$.
  For some $\alpha : [k] \to [n + 1]$ in $D[n + 1]$ which factors through either $[n]$ or
  $\partial [n] \star \{n + 1\}$, we have $B(\hat{U})(\alpha) = V(\alpha)$, so we
  can pad $F([n + 1])$ with identity maps to obtain a natural transformation
  \[ B(\hat{U}) \restriction S \longrightarrow V \restriction S  \]
  where $S$ is the sieve
  $\Sd[n + 1] \cup D(\Delta[n] \cup \partial \Delta[n] \star \Delta\{ n + 1 \}) \hookrightarrow D[n + 1]$. Consider
  the natural transformation as a homotopical Reedy cofibrant map
  \[ [1] \longrightarrow {(\cat{C}_0^S)}_{\text{R}}. \]
  Precompose with the canonical map $D[1] \to [1]$ to obtain a homotopical,
  yet not neccessarily Reedy cofibrant map
  \[ D[1] \longrightarrow {(\cat{C}_0^S)}_{\text{R}}, \]
  and then use Lemma\ \ref{lem:reedy-modification} to modify this to be Reedy
  cofibrant while leaving the restriction to $[1] \subset D[1]$ fixed.
  By Lemma~\ref{lem:sd-lift} there exists a lift
  \[ 
    \begin{tikzcd}[column sep = 1.5cm]
      \{0, 1\} \ar[hookrightarrow]{d} \ar{r}{B(\hat{U}) \sqcup V} & {(\cat{C}_0^{D[n + 1]})}_{\text{R}} \ar{d} \\
      D[1] \ar{r} \ar[dashed]{ur} & {(\cat{C}_0^S)}_{\text{R}}
    \end{tikzcd},
  \]
  which is also homotopical and Reedy cofibrant. Transpose this to a diagram
  \[ D[1] \times D[n + 1] \longrightarrow \cat{C}_0 \]
  and precompose with the canonical map $D([1] \times [n + 1]) \to D[1] \times D[n + 1]$ to
  get
  \[ D([1] \times [n + 1]) \longrightarrow \cat{C}_0, \]
  which is still homotopical and Reedy cofibrant by Lemma\ \ref{lem:exact-product-d}.
  By construction, this functor agrees with $B(\hat{U})$ when restricted to
  $D(\{ 0 \} \times [n + 1])$, with $V$ when restricted to $D(\{ 1 \} \times [n
  + 1])$, and is the identity on the constant diagrams $B(X)$ and $B(Y)$ on
  $D([1] \times [n])$ and $D([1] \times \{ n + 1\})$, respectively. Hence we obtain
  a simplicial homotopy between $\hat{u}$ and $v$ that leaves the boundary $u$ fixed.
\end{proof}

We can thus conclude with our main result:
\begin{thm}
  Let $\cat{C}$ be a pretriangulated dg-category. Then
  \[ B : \dgNerve(\cat{C}) \longrightarrow \fNerve(\cat{C}_0), \]
  as defined in Section~\ref{sec:construction}, is an equivalence of
  quasicategories.
\end{thm}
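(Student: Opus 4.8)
The statement is the capstone that assembles the work of the preceding sections, so the plan is to combine them and invoke the standard characterisation of equivalences of quasicategories. By Proposition~\ref{prop:b-functor} the assignment $(X, f) \mapsto B(X, f)$ is a map of simplicial sets $B : \dgNerve(\cat{C}) \to \fNerve(\cat{C}_0)$; since $\dgNerve(\cat{C})$ is a quasicategory and $\fNerve(\cat{C}_0)$ is a (finitely cocomplete) quasicategory, this $B$ is a functor of quasicategories. It then suffices to recall from~\cite[Definition 1.2.10.1]{higher-topos-theory} that a functor of quasicategories is an equivalence precisely when it is essentially surjective and fully faithful, and to cite the two properties established above.

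For essential surjectivity I would simply invoke Proposition~\ref{prop:b-essentially-surjective}: every homotopical Reedy cofibrant diagram $X : D[0] \to \cat{C}_0$ is, via Lemma~\ref{lem:frames-equivalent}, equivalent as a $0$-simplex of $\fNerve(\cat{C}_0)$ to the canonical resolution $B(X(\seq{0}))$ of its value at $\seq{0}$, so $B$ meets every isomorphism class in the homotopy category $\Ho(\fNerve(\cat{C}_0))$. For full faithfulness I would invoke Proposition~\ref{prop:b-fully-faithful}, which shows that the induced map on right mapping spaces $B_* : \MapR{\dgNerve(\cat{C})}(X, Y) \to \MapR{\fNerve(\cat{C}_0)}(B(X), B(Y))$ is a weak homotopy equivalence for all objects $X, Y$, verified through the lifting criterion of Lemma~\ref{lem:weak-equiv-simplicial-sets} by extending a given sphere of coherence data over the top cell and then comparing its resolution with the prescribed Reedy cofibrant diagram. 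Putting the two together yields that $B$ is an equivalence of quasicategories, which is exactly the claim.

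There is no further obstacle at this stage: all of the genuine content sits in the propositions being cited. The truly delicate part of the whole development, and the step I expect to demand the most care, is the full faithfulness argument of Proposition~\ref{prop:b-fully-faithful}, where one must construct the missing twisted-cochain value $\hat{f}([n+1]) \in {\Map{\cat{C}}(X, Y)}_n$ so that the Maurer--Cartan condition~(\ref{eqn:extension-maurer-cartan}) holds, and then produce the comparison map $F([n+1])$ together with a simplicial homotopy relative to the boundary between $B(\hat{U})$ and $V$; this rests essentially on the homotopy retraction of Lemma~\ref{lem:last-inclusion-equivalence} and the acyclic-cofibration splitting of Lemma~\ref{lem:acyclic-cofibration-retraction}. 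Once those ingredients are in hand, the present theorem follows formally from the three propositions and the definition of an equivalence of quasicategories.
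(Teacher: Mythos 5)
Your proposal is correct and matches the paper's own proof: both simply assemble Propositions~\ref{prop:b-representable-reedy}, \ref{prop:b-homotopical}, \ref{prop:b-functor}, \ref{prop:b-essentially-surjective} and \ref{prop:b-fully-faithful} and invoke the characterisation of equivalences as essentially surjective, fully faithful functors. All the substantive content lives in the cited propositions, as you note.
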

\begin{proof}
  $B$ sends simplices of $\dgNerve(\cat{C})$ to Reedy cofibrant diagrams by
  Proposition~\ref{prop:b-representable-reedy}, which are homotopical by
  Proposition~\ref{prop:b-homotopical}. $B$ is a functor of quasicategories by
  Proposition~\ref{prop:b-functor}, which is essentially surjective by
  Proposition~\ref{prop:b-essentially-surjective} and fully faithful by
  Proposition~\ref{prop:b-fully-faithful}. Hence $B$ is an equivalence.
\end{proof}

\bibliographystyle{alpha}
\bibliography{dg-frames}

\end{document}